\newcommand{\C}{\mathbb{C}}
\newcommand{\N}{\mathbb{N}}\newcommand{\R}{\mathbb{R}}
\newcommand{\D}{\mathcal{D}}
\newcommand{\DZ}{\mathcal{D}_Z}
\newcommand{\DoZ}{\mathcal{D}_{\oZ}}
\newcommand{\DU}{\mathcal{D}_U}
\newcommand{\G}{\mathcal{G}}
\newcommand{\bz}{\bm{z}}
\newcommand{\Jn}{{\rm J}^n}
\newcommand{\n}{{}^{(n)}}
\newcommand{\B}{\mathcal{B}}
\newcommand{\bZ}{\bm{Z}}
\newcommand{\jet}{\mathrm{j}}
\newcommand{\hB}{\widehat{\B}}
\newcommand{\hrho}{\widehat{\rho}}
\newcommand{\hD}{\widehat{D}}
\newcommand{\bH}{\textbf{H}}
\newcommand{\mK}{\mathcal{K}}
\newcommand{\bp}{{\bm{p}}}
\newcommand{\omu}{\overline{\mu}}
\newcommand{\bmu}{\bm{\mu}}
\newcommand{\mcu}{\nu}
\newcommand{\mcv}{\psi}
\newcommand{\sctypez}{\delta}
\newcommand{\sctypeoz}{\kappa}
\newcommand{\sctypeu}{\lambda}
\newcommand{\oxi}{\overline{\xi}}
\newcommand{\pp}[2]{\frac{\partial #1}{\partial #2}}
\newcommand{\vv}{\mathbf{v}}
\newcommand{\CMJ}{{\frak J}}
\newcommand{\CMoJ}{\overline \CMJ}
\newcommand{\CMK}{{\frak K}}
\newcommand{\CML}{{\frak L}}
\newcommand{\CMR}{{\frak R}}
\let\mathcal\mathscr
\newtheorem{Theorem}{Theorem}[section]
\newtheorem{Proposition}[Theorem]{Proposition}
\newtheorem{Lemma}[Theorem]{Lemma}
\theoremstyle{definition}
\newtheorem{Definition}[Theorem]{Definition}
\newtheorem{Remark}[Theorem]{Remark}
\newcommand{\oz}{\bar{z}}
\newcommand{\oZ}{\overline{Z}}
\newcommand{\oF}{\overline{F}}
\def\abs#1{|\,{#1}\,|}
\def\hexnumber#1{\ifcase#1 0\or1\or2\or3\or4\or5\or6\or7\or8\or9\or
 A\or B\or C\or D\or E\or F\fi}
\edef\msbhx{\hexnumber\symAMSb}   
\mathchardef\emptyset="0\msbhx3F
\def\i{\,{\rm i}\,}
\def\ostrut#1#2{\hbox{\vrule height #1pt depth #2pt width 0pt}}
\subjclass[2020]{32V40, 58K50, 22F50, 53A55.}
\begin{document}


\title{
Normal forms, moving frames, and differential invariants\\
for nondegenerate hypersurfaces in $\mathbb C^2$
}

\author{Peter J. Olver}
\address{School of Mathematics, University of Minnesota, Minneapolis, MN 55455, USA}
\email{olver@umn.edu}

\author{Masoud Sabzevari}
\address{Department of Mathematics,
Shahrekord University, 88186-34141, Shahrekord, IRAN and School of
Mathematics, Institute for Research in Fundamental Sciences (IPM), 19395-5746, Tehran, IRAN}
\email{sabzevari@ipm.ir}

\author{Francis Valiquette}
\address{Department of Mathematics,
Monmouth University, West Long Branch, NJ 07764,  USA}
\email{fvalique@monmouth.edu}

\date{\number\year-\number\month-\number\day}

\begin{abstract}
We use the method of equivariant moving frames to revisit the problem of normal forms and equivalence of nondegenerate real hypersurfaces $M \subset\mathbb C^2$  under the pseudo-group action of holomorphic transformations.  The moving frame recurrence formulae allow us to systematically and algorithmically recover the results of Chern and Moser for hypersurfaces that are either non-umbilic at  a point $\bp \in M$ or umbilic in an open neighborhood of it. In the former case, the coefficients of the normal form expansion, when expressed as functions of the jet of the hypersurface at the point, provide a complete system of functionally independent differential invariants that can be used to solve the equivalence problem. We prove that under a suitable genericity condition, the entire algebra of differential invariants for such hypersurfaces can be generated,  through the operators of invariant differentiation, by a single real differential invariant of order $7$. We then apply moving frames to construct new convergent normal forms for nondegenerate real hypersurfaces at singularly umbilic points, namely those umbilic points where the hypersurface is not identically umbilic around them.
\end{abstract}

\maketitle

\pagestyle{headings} \markright{Normal forms, moving frames, and differential invariants for hypersurfaces}

\numberwithin{equation}{section}

\section{Introduction}

The analysis of the local geometry of real hypersurfaces in the complex space $\mathbb C^n$ has its origins in the works of Poincar\'e, \cite{Poincare-1907}, and \'Elie Cartan, \cite{Cartan-1932}, which served to initiate the subject known as  Cauchy--Riemann (CR for short) geometry, \cite{Jacobowitz}.  The field received a major impetus with the seminal 1974 paper of Chern and Moser, \cite{Chern-Moser}, which applied two complementary methods to study the problem: normal forms based on Taylor expansions, and the Cartan equivalence method, \cite{Cartan-1932, Olver-1995}.  The two methods brought a different range of tools and results, and their precise interrelationship remains not entirely clear.  The Chern--Moser paper inspired many developments in CR geometry, continuing to this day; see, for example, \cite{Beloshapka-80,KKZ-17, Normal-Form,Webster-78}.

The aim of this paper is to reconcile the Cartan equivalence and normal form methods by conducting a careful analysis of hypersurfaces in $\mathbb C^2$.  In particular, we derive normal forms in the previously unstudied case of ``singularly umbilic points.''  The reconciliation will be accomplished through the method of equivariant moving frames that was developed by the first author and collaborators, \cite{Olver-Fels-99, Olver-Pohjanpelto-08}, as a reformulation of classical moving frames, \cite{Cartan-1935, Guggenheimer}, in a form that can be effectively and systematically applied to arbitrary finite-dimensional Lie group actions and, as required in the present situation, a large class of infinite-dimensional Lie pseudo-groups.  The method provides  powerful algorithmic tools that enable one to systematically determine the differential invariants, the invariant differential operators, the invariant differential forms, and  the complete structure of the associated differential invariant algebra.  This allows one to solve equivalence problems through the construction of differential invariant signatures, making it a compelling alternative to the Cartan equivalence method.   See also \cite{Arnaldsson,Valiquette-11, Valiquette-SIGMA} for further developments towards synthesizing and extending these two methods.  Since its inauguration, the method of equivariant moving frames has been used in an ever expanding range of interesting and novel applications throughout mathematics, physics, engineering, computer vision, and beyond, \cite{Olver-2015}.

The starting point for the construction of an equivariant moving frame is the choice of a cross-section to the prolonged (pseudo-)group orbits in the submanifold jet space.  In \cite{Olver-2018}, this construction was reinterpreted as the specification of a normal form for the submanifolds under the group action, thereby providing a bridge to the approaches found in Chern and Moser. The unnormalized or ``non-phantom'' coefficients in the normal form expansion provide a complete system of functionally independent differential invariants that can then be used to solve the associated equivalence problem. The choice of cross-section or, equivalently, normal form will specify or ``normalize'' expressions for the group parameters that serve to prescribe the equivariant moving frame map. More generally, through a recursive procedure, \cite{Olver-Valiquette}, one can introduce a succession of partial normal forms (partial cross-sections) which can be used to normalize more and more of the group parameters.  Substitution of the results into the prolonged transformation formulae can be regarded as an ``invariantization'' process that maps functions, differential forms, differential operators, etc., to their (partially) invariant counterparts.

The most important contribution of the method of equivariant moving frames are the remarkable {\it recurrence relations} that enable one to write the differentials of the  (partially) normalized invariants in terms of the (partially) normalized invariant horizontal one-forms and the (partially) normalized Maurer--Cartan forms associated with the pseudo-group, \cite{Olver-Pohjanpelto-05}, where the latter can be explicitly determined from the phantom recurrence formulae associated with the cross-section equations or, equivalently, the coefficients in the normal form that have been normalized to be constant.  Remarkably, these recurrence relations can be constructed from the cross-section by purely symbolic calculations, involving only linear algebra.  Notably, they do not require coordinate expressions for the differential invariants, the prolonged pseudo-group action, or even the moving frame itself!  Moreover, they are well behaved under partial normalization, retaining their basic form throughout the computations. The full details of the symbolic moving frame calculus will appear throughout our calculations.  It is worth re-emphasizing that, in contrast to the classical methods of solving equivalence problems and constructing normal forms, the equivariant moving frame method is a) completely systematic/algorithmic; b) straightforwardly implemented; and c) can be readily generalized to other problems of interest.  While the required calculations are facilitated by the use of symbolic manipulation software packages, only the basic formulae for the (infinitesimal) pseudo-group action is required as input, and they can be effected with minimal human intervention --- mostly just the specification of the cross-section --- during the computations.

Once a complete collection of differential invariants has been determined, the Cartan equivalence method enables us to solve both the equivalence problem for submanifolds under the prescribed transformation (pseudo-)group and to determine  the size of their (local) symmetry group.  To be precise, \cite{Olver-sg}, given a Lie pseudo-group acting on a space, a \emph{local symmetry} of a  submanifold $M$ at a point $\bp \in M$ is a pseudo-group transformation that maps an open neighborhood of $\bp$ to some other open subset of $M$. The set of local symmetries forms a groupoid that contains all global symmetries, but whose structure can vary over the submanifold.     In particular, the \emph{isotropy subgroup} at a point $\bp \in M$ is the group of local symmetries that fix $\bp$.  The isotropies form a bona fide group, whose structure can vary from point to point. For example, consider the rotation group ${\rm SO}(n)$ acting on $\R^n$; the isotropy subgroup at each point $0\neq\bp\in\mathbb R^n$ is isomorphic to ${\rm SO}(n-1)$, while at the origin it is all of ${\rm SO}(n)$.  Moreover, many open subsets of a sphere, $U \subset \mathbb S^{n-1} \subset \R^n$, have no global rotational symmetries, but each point $\bp \in U$ retains a full set of local rotational symmetries.

In the equivariant moving frames setting, a point $\bp \in M$ is called \emph{nonsingular} if its isotropy subgroup is discrete.  As shown in \cite{Olver-2000} (generalized to Lie pseudo-groups), this is equivalent to its jet of sufficiently high order belonging to the nonsingular subset of jet space where the pseudo-group acts locally freely. Nearby a nonsingular point, the number of  functionally independent  differential invariants equals 
the codimension of its local symmetry groupoid; see \cite[Theorem 14.26]{Olver-1995}, \cite[Theorem 5.17]{Olver-Fels-99}, \cite{Olver-sg} for details.
For example, if all the differential invariants are constant, then the $p$-dimensional nonsingular submanifold $M$ admits a 
$p$-dimensional local symmetry groupoid, and hence locally coincides with an orbit thereof.   For a finite-dimensional Lie transformation group, the \emph{totally singular submanifolds}, all of whose points are singular, are classified Lie algebraically in  \cite{Olver-2000}, although it is not known if this result can be generalized to infinite-dimensional pseudo-groups.

Turning to our specific problem, we introduce the local coordinates $z:=x+\i y, \ w:=u+\i v$ on the complex plane $\mathbb C^2$.  The \emph{Chern--Moser normal form} for a $3$-dimensional nondegenerate real hypersurface $M \subset \mathbb C^2$ at a point $\bp \in M$ is obtained by applying a sequence of holomorphic pseudo-group transformations that first map $\bp$ to the origin, and then fix the form of its convergent Taylor expansion
\begin{equation}\label{NF-CM-general}
v = z\oz+c_{42}(u)\, z^4\oz^2+c_{24}(u)\, z^2\oz^4+\sum_{j+k\geq 7\atop {{\rm min}(j,k)\geq 2}} c_{jk}(u)\, z^j \oz^k,
\end{equation}
for certain coefficients $c_{jk}(u)= \overline{c_{kj}(u)}$; see \cite[eq.\ (3.18)]{Chern-Moser}.  In particular, the Taylor coefficient $c_{42}(0)$ is a constant multiple of  the {\it Cartan curvature} of $M$ at $\bp$, \cite{Cartan-1932, Jacobowitz, Sabzevari-Merker-14}.

Chern and Moser call $\bp$ an {\it umbilic point} whenever the Cartan curvature vanishes there. In the {\it non-umbilic} case, further normalization enables us to uniquely fix the normal form \eqref{NF-CM-general} to be
\begin{equation}\label{NF-CM-non-umb}
v=z\oz+2 \,{\rm Re} \> \big\{z^4\oz^2(1+ \CMJ \,z+\i \CMK \,u)\big\}+\cdots\>,
\end{equation}
where $\CMJ, \CMK$ are seventh order differential invariants, with $\CMJ$ complex-valued, while $\CMK$ is real-valued; see \cite[eq. (3.20)]{Chern-Moser}.  The transformation $z \mapsto -z$ preserves the normal form \eqref{NF-CM-general}, while mapping $\CMJ \mapsto -\,\CMJ$.  For this reason, Chern and Moser designate $\CMJ^2$ as the primary invariant, but for us it is more convenient to work with $\CMJ$, keeping in mind the inherent sign ambiguity\footnote{This is similar to the ambiguity in the curvature invariant $\kappa $ of a curve $C$ in the Euclidean plane $\R^2$, whose sign changes under a $180^{\circ}$ rotation, and hence depends on the curve's orientation.}.
For such hypersurfaces, there remains only a discrete isotropy subgroup,  which is either trivial or contains exactly two elements, \cite{Beloshapka-80}.  As in \cite{Olver-2018}, the non-phantom Taylor coefficients in \eqref{NF-CM-non-umb} provide the complete system of functionally independent differential invariants (modulo sign changes) for non-umbilic hypersurfaces under holomorphic transformations.

\medskip
{\it Remark\/}: \ 
As noted by Chern and Moser, the case of hypersurfaces in $\C^2$ is quite different from that of $\C^n$ for $n\geq 3$; in particular the latter cases produce differential invariants of lower orders.
\medskip

We will apply the equivariant moving frame recurrence formulae to prove that, on a general non-umbilic hypersurface the entire algebra of differential invariants can be obtained by repeated invariant differentiation of the four real-valued invariants ${\rm Re} \>\CMJ, {\rm Im} \>\CMJ,\CMK, \CML$, where the latter is the eighth order differential invariant appearing as the coefficient of $z^4\oz^4$ in the Chern--Moser normal form \eqref{NF-CM-non-umb}. Even better, if the hypersurface is ``$\CMK$-nondegenerate'', as formulated in Definition \ref{CMKnondegenerate}, then its differential invariant algebra  can, in fact, be generated by the single real-valued Chern--Moser invariant $\CMK$.

On the other hand, if the hypersurface $M$ is umbilic {\it in a neighborhood} of a point $\bp$, then it is locally biholomorphically equivalent to the Heisenberg  sphere $\mathbb H$, which is defined by $v=z\oz$.  This  sphere admits an $8$-dimensional holomorphic symmetry group generated by the vector fields
\begin{equation}\label{G8}
\aligned
&\vv_1=(w+2\i z^2)\,\partial_z+2\i zw\,\partial_w, \quad
\vv_2=(\i w+2\,z^2)\,\partial_z+2\,zw\,\partial_w,\quad
\vv_3=zw\,\partial_z+w^2\,\partial_w,\\
&\vv_4=z\,\partial_z+2\,w\,\partial_w,\quad
\vv_5=\i z\,\partial_z.\quad
\vv_6=\partial_z+2\i z\,\partial_w,\quad
\vv_7=\i \partial_z + 2 z\,\partial_w.\quad
\vv_8=\partial_w,
\endaligned
\end{equation}
which span a real Lie algebra isomorphic to $\frak{sl}(3,\R)$; see, e.g., \cite[Proposition 2.1]{Merker-Sabzevari-CEJM}.  At each point on the sphere, there is a $5$-dimensional isotropy subalgebra which is isomorphic to its maximally solvable Borel subalgebra; for example, the first $5$ vector fields span the isotropy subalgebra  at the origin. Moreover, every totally singular nondegenerate hypersurface is locally equivalent to the umbilic Heisenberg sphere.

In addition to these two known scenarios, there exists another case which has garnered less attention in the literature but is worth investigating in detail. A point $\bp \in M$ is {\it singularly umbilic}  if the hypersurface is not identically umbilic on any local neighborhood thereof.
Another key contribution of this paper is to apply the equivariant moving frame method to construct a new normal form for hypersurfaces at such points.
We say that a singularly umbilic point $\bp \in M$ has {\it umbilic order}  $n_0\geq 7$ if this is the minimal order\footnote{For this purpose, we ignore the order $2$ term $z \oz$.} at which a non-zero term of that order appears in the partial normal form \eqref{NF-CM-general}. The classification problem then splits into three further subcases. Accordingly, we call $\bp$ {\it circular} when the partial normal form \eqref{NF-CM-general} only contains terms involving $c_{kk}(u)z^k \oz^k$, i.e., the sum is only over $j=k$; in this case, $n_0\geq 8$.  In the non-circular case, we call $\bp$ {\it semi-circular} whenever in its corresponding partial normal form \eqref{NF-CM-general},  all order $n_0$ monomials are of the form $(z\oz)^k u^\ell$. Otherwise, we say that $\bp$ is a {\it generic}\footnote{The reader should not confuse our use of this term with the standard definition of generic submanifolds in CR geometry, \cite{Jacobowitz}.} singularly umbilic point.  In particular, apart from the fully umbilic Heisenberg sphere, the only points on a nondegenerate hypersurface with continuous isotropy, and hence singular in the above sense, are the circular singularly umbilic points.  By the definition of singularly umbilic, any neighborhood of a circular point must contain non-umbilic points, which are necessarily nonsingular, thus proving our claim that every totally singular hypersurface is locally equivalent to the Heisenberg sphere.  This moreover implies that the local symmetry group of a nondegenerate hypersurface $M$ can have dimensions $0$ (discrete), $1$, $2$, $3$, or $8$ while, as in \cite{Beloshapka-80}, the isotropy subgroup at a point can have dimensions $0$ (discrete), $1$, or $5$.  In both, the largest dimensions are achieved only on fully umbilic (totally singular) hypersurfaces.

The structure of the paper is given as follows.  In Section \ref{sec-prel} we introduce the preliminary materials necessary for constructing normal forms of hypersurfaces in $\mathbb{C}^2$ under the pseudo-group action of biholomorphic transformations.  In Section \ref{sec-NF-general-comp} we begin the implementation of the normalization process using the {\it recurrence relations} for the (partially) normalized invariants. We perform the normalizations up to order six, at which point the problem splits into two branches depending on whether the point is umbilic or non-umbilic. Sections \ref{non-umb} and \ref{umb} re-establish the Chern--Moser normal forms for, respectively, the non-umbilic and umbilic cases.   Section \ref{non-umb} also contains the proof of our result concerning the structure and generators of the differential invariant algebra associated with non-umbilic hypersurfaces. Section \ref{sec-equi-rigid} contains results on the equivalence and rigidity properties of these hypersurfaces.
Further new results are presented starting in Section \ref{sec-semi-umb}, where we consider nondegenerate hypersurfaces $M$ that are singularly umbilic at a point $\bp$. The three cases --- generic, semi-circular, and circular --- are treated in Sections \ref{sec-generic-semi-umb}, \ref{sec-semi-circular}, and \ref{sec-circular}, respectively. Finally, in Section \ref{sec-iso} we will briefly discuss the symmetry and isotropy groups associated with  nondegenerate hypersurfaces.

Throughout, we set  $\N = \{ 0 < n \in \mathbb{Z}\}$ and $\N_0 = \{ 0 \leq n \in \mathbb{Z}\}$.  In general, we will work purely symbolically and do not attempt to find the local coordinate expressions for the differential invariants, the invariant differential operators and forms, the moving frame, and so on.  In principle, the local coordinate expressions can be constructed using the non-symbolic equivariant moving frame calculus, but the required computations are daunting.

\section{Preliminary materials}\label{sec-prel}

In this section we set up the local equivalence problem for real hypersurfaces $M \subset\mathbb{C}^2$ under holomorphic transformations in preparation for applying the method of equivariant moving frames.

\subsection{The pseudo-group of holomorphic transformations}

As above, we will use $z= x +\i y, \ w=u+\i v$ to denote the standard coordinates on the complex plane $\mathbb{C}^2$.  In our analysis of real hypersurfaces, it will be convenient to employ $z, \oz,u,v$ as the basic coordinates.

The Lie pseudo-group $\G$ consists of holomorphic transformations of $\mathbb{C}^2$, and hence is of the form
\begin{equation}\label{G}
\varphi\colon (z, w)\ \longmapsto \ {\bigl(\,Z :=F(z, u, v), \  W= U + \i V := G(z, \oz, u, v)+ \i H(z, \oz, u, v)\,\bigr),}
\end{equation}
so that $F$ is complex-valued, while $G,H$ are real-valued analytic functions.
The transformations in $\G$ are characterized by the basic determining equations, \cite[$\S3$]{Normal-Form},
\begin{equation}\label{eq: determining equations}
\begin{gathered}
{F_{\oz} = \oF_z = 0,\qquad F_v = \i F_u,\qquad \oF_v = -\i \oF_u,}\\
{H_z = -\i G_z,\qquad H_{\oz}=\i G_{\oz},\qquad H_u=-G_v,\qquad H_v = G_u.}
\end{gathered}
\end{equation}
We use $g\n  = \jet_n \varphi= (F\n,\oF\n,H\n,G\n)$ to denote the $n$-th order jet of the transformation \eqref G, whose components are the partial derivatives of order $0 \leq k \leq n$ of the functions with respect to their arguments.
Repeated differentiation of the involutive first order determining equations \eqref{eq: determining equations} provides the complete system of constraints on the jets of pseudo-group transformations.

Let $\mathfrak{g}$ denotes the collection of all local infinitesimal generators of $\G$.  Note that $\mathfrak{g}$ is closed under the Lie bracket operation, but does not form a Lie algebra in the usual sense since the Lie bracket of two vector fields is only defined on the intersection of their domain of definition.  By linearizing the determining equations \eqref{eq: determining equations} at the identity transformation, cf. \cite{Olver-1995, Olver-Pohjanpelto-05}, we see that a vector field
\begin{equation}\label{eq: v}
\vv = \xi(z,u,v)\pp{}{z} + \overline{\xi(z,u,v)}\pp{}{\oz} + \eta(z,\oz,u,v)\pp{}{u} + \phi(z,\oz,u,v)\pp{}{v}
\end{equation}
belongs to $\mathfrak{g}$ if and only if its components satisfy the \emph{infinitesimal determining equations}
\begin{subequations}\label{eq: infinitesimal determining equations}
\begin{equation}\label{eq: order 1 infinitesimal determining equations}
\begin{gathered}
\xi_{\oz}=\oxi_z=0,\qquad \xi_v = \i \xi_u,\qquad \oxi_v=-\i \oxi_u, \\
\phi_z = -\i \eta_z,\qquad \phi_{\oz} = \i \eta_{\oz},\qquad \phi_u = - \eta_v,\qquad \phi_v=\eta_u,
\end{gathered}
\end{equation}
where the subscripts indicate partial derivatives with respect to the indicated variables.
Differentiating the determining equations \eqref{eq: order 1 infinitesimal determining equations} we obtain the following relations among the second order vector field jets
\begin{equation}\label{eq: 2nd order jet relations}
\begin{aligned}
\xi_{\oz a} &= 0, &\qquad \xi_{zv} &= \i \xi_{zu},& \qquad \xi_{uv} &= \i \xi_{uu},&\qquad \xi_{vv} &= -\xi_{uu},\\
\oxi_{z a} &= 0, &\qquad  \oxi_{\oz v} &= -\i \oxi_{\oz u}, &\qquad \oxi_{uv} &= -\i \oxi_{uu},&\qquad \oxi_{vv} &= -\oxi_{uu},\\
\eta_{z\oz} &= \phi_{z\oz}=0, &\qquad \eta_{zv}&=\i \eta_{zu},&\qquad \eta_{\oz v} &= -\i \eta_{\oz u},  &\qquad \eta_{vv} &=-\eta_{uu},\\
\phi_{z a} &= -\i \eta_{z a},&\qquad \phi_{\oz a} &= \i \eta_{\oz a}, &\qquad \phi_{u a} &= -\eta_{v a}, &\qquad \phi_{v a} &= \eta_{u a},
\end{aligned}
\end{equation}
\end{subequations}
where $a \in \{z,\oz,u,v\}$.  Relationships among the higher order vector field jets $\zeta\n =(\xi\n,\oxi\n,\eta\n,\phi\n)$ are obtained by further differentiation of the second order determining equations \eqref{eq: 2nd order jet relations}.

As in \cite{Olver-Pohjanpelto-05}, let $\bmu^{(\infty)} =(\mu_A, \omu_A, \mcu_A, \mcv_A)$ denote the Maurer--Cartan forms for the pseudo-group $\D$ of local diffeomorphisms of $\mathbb{C}^2$, where $A=(a_1,\ldots, a_k)$, with $k \geq 0$, is a symmetric multi-index with $a_\nu \in \{Z,\oZ,U,V\}$. (We will not need their explicit formulae here.) Restricting them to the sub-pseudo-group $\G$ produces its Maurer--Cartan forms, for which we use the same notation, but now they are no longer linearly independent.   According to the structure Theorem 6.1 in \cite{Olver-Pohjanpelto-05}, the infinitesimal determining equations \eqref{eq: infinitesimal determining equations} yield, after the \emph{invariantization} substitutions
\begin{equation}\label{eq: substitutions}
\xi\longmapsto \mu,\quad \oxi\longmapsto \omu,\quad \eta\longmapsto \mcu,\quad \phi\longmapsto \mcv,\quad z\longmapsto Z,\quad \oz\longmapsto \oZ,\quad u\longmapsto U,\quad v\longmapsto V,
\end{equation}
the complete system of linear relations among the Maurer--Cartan forms on the pseudo-group $\G$.  In particular, the first and second order Maurer--Cartan forms satisfy
\begin{equation}\label{eq: MC relations}
\begin{aligned}
& \mu_{\oZ} = 0,&\qquad &\omu_Z = 0,&\qquad &\mu_V=\i \mu_U,&\qquad &\omu_V = -\i \omu_U,\\
&\mcv_Z = -\i \mcu_Z,&\qquad &\mcv_{\oZ}=\i \mcu_{\oZ},&\qquad&\mcv_U = - \mcu_V,&\qquad &\mcv_V = \mcu_U,\\
&\mu_{\oZ a} = 0,&\qquad &\mu_{ZV} = \i \mu_{ZU},&\qquad &\mu_{UV} = \i \mu_{UU},&\qquad &\mu_{VV}=-\mu_{UU},\\
&\omu_{Z a} = 0,&\qquad &\omu_{\oZ V} = -\i \omu_{\oZ U},&\qquad& \omu_{UV} =-\i \omu_{UU},&\qquad& \omu_{VV} = -\omu_{UU},\\
&\mcu_{Z\oZ}=\mcv_{Z\oZ}=0,&\qquad& \mcu_{ZV} = \i \mcu_{ZU},&\qquad& \mcu_{\oZ V} = -\i \mcu_{\oZ U},&\qquad& \mcu_{VV} = - \mcu_{UU},\\
&\mcv_{Z a} = -\i \mcu_{Z a},&\qquad& \mcv_{\oZ a} = \i \mcu_{\oZ a},&\qquad &\mcv_{U a} = - \mcu_{V a},&\qquad& \mcv_{V a} = \mcu_{U a},
\end{aligned}
\end{equation}
where $a \in \{Z, \oZ, U, V\}$, and similarly for their higher order counterparts. It follows that a basis of Maurer--Cartan forms for $\mathcal G$ is provided by
\[
\mu_{Z^kU^\ell},\qquad \omu_{\oZ{}^k U^\ell},\qquad \mcu_{U^k V}, \qquad \mcu_{Z^k U^\ell},\qquad \mcu_{\oZ{}^k U^\ell},\qquad \mcv,\qquad \text{where}\qquad k,\, \ell \geq 0.
\]
The structure equations for the holomorphic pseudo-group $\G$ can be deduced by the general results in \cite{Olver-Pohjanpelto-05}, but will also not be needed in what follows.

\subsection{Holomorphic action on nodegenerate hypersurfaces}

We are interested in the local CR geometry of real hypersurfaces $M \subset\mathbb C^2$.  As in Chern and Moser, \cite{Chern-Moser}, we assume throughout that $M$ is real analytic, although almost all of our results (apart from the convergence properties of the normal form series) are equally valid for smooth, meaning $C^ \infty $ hypersurfaces. We call two hypersurfaces $M, \widetilde M$ (locally) \emph{congruent} if there is a holomorphic map $g \in \G$ such that (locally) $\widetilde M = g \cdot M$.

At any point $\bp \in M$, one can choose local coordinates $z, w=u+\i v$, in terms of which the hypersurface is represented as the graph of the real-analytic function
\[
v=f(z,\oz, u).
\]
We assume throughout that $M$ is everywhere (Levi) \emph{nondegenerate}, meaning that the corresponding \emph{Levi form} $v_{z\oz} = \partial ^2 f/\partial z \partial \oz$ is nonzero.

For $0 \leq n \leq \infty $, let $\Jn$ denote the $n$-th order jet bundle of hypersurfaces $M \subset \C^2$.
The action of the pseudo-group $\G$ on $\mathbb{C}^2$ induces an action on hypersurfaces, and hence on their jets, which is known as the \emph{prolonged action}, \cite{Olver-1995}. In terms of the usual jet coordinates $v\n =(v, v_z, v_{\oz}, v_u, \ldots, v_J, \ldots\>) \in \Jn$, where $J$ is a symmetric multi-index of order $\#J\leq n$ whose entries $j_ \nu \in \{z,\oz,u\}$, that represent the partial derivatives of $v$ of order $\leq n$, the corresponding transformed jet coordinates are labelled with capitalized letters: $V\n = (V,V_Z, V_{\oZ}, V_U, \ldots, V_J, \ldots\>)$, where now the entries of the multi-index $J$ are $j_ \nu \in \{Z,\oZ,U\}$, in accordance with Cartan's convention, cf.~\cite{Olver-Pohjanpelto-05}.  The rather complicated explicit expressions for the prolonged pseudo-group action follow from the usual rules of implicit differentiation, \cite{Olver-Pohjanpelto-08}, but will not be used in this paper\footnote{They would be required if one wishes to compute the explicit formulae for the normalized differential invariants through the equivariant moving frame calculus, but here we choose to work purely symbolically.}.

\subsection{Moving frames}

The method of equivariant moving frames for finite-dimensional Lie group actions was introduced in \cite{Olver-Fels-99} and extended to infinite-dimensional pseudo-groups in \cite{Olver-Pohjanpelto-08}.  In outline, it proceeds via the following algorithm. One begins by prolonging the (pseudo-)group action to the relevant submanifold jet bundles; the formulae for the transformed derivatives (jet coordinates) are known as the \emph{lifted differential invariants} since they can be viewed as invariant functions on a certain ``lifted bundle.''  At each successive jet order, one chooses a cross-section to the prolonged action; in practice, this amounts to normalizing one or more of the lifted invariants by setting them equal to a constant, typically either $0$ or $1$, although other choices may at times be required and/or more convenient\footnote{This describes what is known as a ``coordinate cross-section''; more general cross-sections are rarely used and will not concern us here.}.  Each normalization equation is then solved for one of the pseudo-group parameters, and the result is a partial moving frame, which can be interpreted as an equivariant subbundle of the lifted bundle.  One then substitutes the resulting formulae for the pseudo-group parameters back into the other lifted invariants, producing the \emph{partially normalized lifted invariants}.  Those that were used to specify the cross-section become, of course, their chosen constants, and are referred to as the \emph{phantom invariants}.  Any non-constant partially normalized invariant that no longer depends upon any of the pseudo-group parameters is a differential invariant in the usual sense.  One then iterates this procedure by recursively increasing the jet (derivative) order, until (in favorable cases) one has eliminated all the pseudo-group parameters; the resulting  non-phantom normalized invariants form a complete system of functionally independent differential invariants for the action.  For details on the recursive procedure, see \cite{Olver-2011,Olver-Valiquette}.

Now, the formulae for the prolonged pseudo-group transformations, i.e., the lifted invariants, can become exceedingly complicated, and the practical implementation of the moving frame algorithm may be beyond even  powerful modern symbolic computer algebra systems.  (Another stumbling block is the appearance of rational algebraic functions, which are still not handled particularly well by current software.)  Nevertheless, as can be seen in the cited references, the method has been successfully applied to a very broad range of actions.  There is, however, an alternative, purely symbolic approach that relies purely on linear algebra, and hence can be easily implemented in very challenging situations, including the problem under consideration here. While it does not produce the explicit formulae for the differential invariants and other invariant quantities, it does lead to formulae expressing their interrelationships, as well as the associated normal form, and thus provides significant insight into the overall structure of the problem.  The symbolic method relies on the remarkable \emph{recurrence formulae} that express the differentials of a lifted invariant as a lifted invariant linear combination of two types of invariant differential forms --- the lifted invariant horizontal one-forms, which are obtained by applying the pseudo-group transformations to the coordinate forms $dx^i$ for the independent variables $x^i$, and the Maurer--Cartan forms associated with the action; see Theorem \ref{rr} for details.  The recurrence relations are universal, relying only on the standard and relatively simple formula for the prolonged infinitesimal generators of the action, \cite{Olver-1995}, and do not require knowledge of the actual formulae for the lifted invariants, or the lifted invariant forms, or even the moving frame itself.  Inserting the selected normalization equation for a phantom lifted invariant into its associated recurrence relation produces a linear equation that can be solved for one of the Maurer--Cartan forms as a lifted invariant linear combination of the invariant horizontal forms and the other Maurer--Cartan forms, which thereby determines its partial normalization. Moreover, the allowed choices of normalization constant (zero, nonzero, \dots) are dictated by the structure of the recurrence formula, as the resulting equation must be compatible and solvable for one of the Maurer--Cartan forms.  The resulting normalization of a Maurer--Cartan form corresponds directly to the normalization of a group parameter in the non-symbolic version of the algorithm.

The recurrence formulae retain their structure under the partial normalization process described above, and this allows one to recursively continue the symbolic algorithm, finally leading (in favorable cases) to formulae for all the fully normalized Maurer--Cartan forms in terms of the differential invariants and fully invariant horizontal forms.  The latter one-forms form an invariant horizontal coframe with dual invariant differential operators that map differential invariants to differential invariants.   The remaining fully normalized recurrence formulae form a complete system of identities (differential syzygies) that express the invariant derivatives of the normalized differential invariants as functions thereof, which thus completely determines the structure of the underlying algebra of differential invariants.  They consequently enable one to symbolically determine a system of fundamental differential invariants that generate the entire algebra through invariant differentiation.  Details of the process will become clear in the context of our chosen example.

A very simple example to keep in mind is the case of plane curves under the action of the special Euclidean group (translations and rotations).  The basic normalized differential invariant of lowest order is the curvature $\kappa $. The invariant horizontal form is the arc length $ds$ and the dual invariant differential operator is the arc length derivative.  The higher order differential invariants are then just the successive derivatives of curvature with respect to arc length: $\kappa , \kappa _s, \kappa _{ss}, \ldots \>$.  The recurrence formulae enable one to write the normalized differential invariants obtained through the moving frame process in terms of $\kappa $ and its arc length derivatives; see, for example, \cite{Olver-Fels-99,Olver-2015} for details on the calculations in this simple classic example.

Let us now provide the details of the symbolic moving frame algorithm, in the specific context of  real hypersurfaces $M \subset \C^2$.  For $0 \leq n \leq \infty $, the submanifold jets $\bz\n = (z,\oz,u,v\n)\in \Jn$ and the pseudo-group jets $g\n\in \G\n$, with source at $\bz=(z,\oz,u,v)$, parametrize the \emph{$n$-th order lifted bundle} $\B\n$.  There is a natural \emph{right action} of the holomorphic pseudo-group $\G$ on $\B\n$ given by combining the prolonged action on hypersurface jets with the action on $\G\n$ given by right composition of pseudo-group jets
\begin{equation}\label{eq: right action}
R_\psi(\bz\n,\jet_n\varphi|_{\bz}) = (\bZ\n,\jet_n(\varphi\circ \psi^{-1})|_{\psi(\bz)})\quad\text{for any}\quad \varphi,\,\psi\, \in\, \G,
\end{equation}
where the composition $\varphi\circ \psi^{-1}$ is defined.  When viewed as functions on the lifted bundle $\B\n$, the transformed quantities $\bZ\n = (Z,\oZ,U,V\n)$ resulting from the coordinate expressions for the prolonged pseudo-group action on $\Jn$ are invariant under the right bundle action \eqref{eq: right action}, and are hence referred to as \emph{lifted} (\emph{differential}) \emph{invariants}. The lifting procedure can be applied to an arbitrary differential function, and defines an \emph{invariantization} process
\[
\iota\bigl[ F(z,\oz,u,v\n) \bigr] = F(Z,\oZ,U,V\n),
\]
that maps differential functions to their lifted invariant counterparts, whose action on the basic jet coordinates is given by\footnote{The multi-index $J$ on $v_J$ contains all lower case letters, in $\{z,\oz,u\}$, whereas on $V_J$ it denotes the multi-index with the corresponding capital letters, in $\{Z,\oZ,U\}$, and should perhaps be denoted $\iota(J)$. However, since the type of a multi-index subscript is always clear from context, we choose not to unnecessarily clutter the notation.}
\begin{equation}\label{eq: iota}
\iota(z) = Z,\qquad
\iota(\oz) = \oZ,\qquad
\iota(u) = U,\qquad
\iota(v_J) = V_J.
\end{equation}
One can similarly lift a differential form on the hypersurface jet bundle $\Jn$  by applying the prolonged pseudo-group transformations, the result being an invariant differential forms on the lifted bundle $\B\n$, \cite{Olver-Pohjanpelto-08}.  In particular, the lifts of the basic horizontal one-forms $dz$, $d\oz$, $du$, form the lifted {\it invariant horizontal coframe}, and are denoted
\begin{equation}\label{eq: omega}
\iota(dz) = \omega^Z,\qquad
\iota(d\oz) = \omega^{\oZ}=\overline{\omega^Z},\qquad
\iota(du) = \omega^U.
\end{equation}
Their coordinate expressions are easily found, but  will not be needed here.

\begin{Remark}
In the ensuing calculations, we will always, for simplicity, omit contact forms from all of our formulae, and so, technically, the differential on the hypersurface jet bundle refers to the horizontal differential; see \cite{Kogan-Olver-2003} for details. The contact forms play no role here, since they vanish on all hypersurfaces, but they are of importance in other contexts, in particular in the study of invariant variational problems.
\end{Remark}

The recursive moving frame normalization process can be formalized as follows, \cite{Olver-2011,Olver-Valiquette,Valiquette-SIGMA}.

\begin{Definition}
A \emph{partial right moving frame} of  order $n$ is a right-invariant local subbundle $\hB\n\subset \B\n$, meaning that $R_\psi(\hB\n) \subset \hB\n$ for all $\psi \in \G$ where the right action \eqref{eq: right action} is defined. If the subbundle $\hB\n$ forms the graph of a right-invariant section of $\B\n$, it defines an equivariant moving frame.
\end{Definition}

Partial moving frames are obtained by choosing a cross-section $\mathcal{K}\n$ to the prolonged action at some order $n$, and solving the corresponding \emph{normalization equations} for the pseudo-group parameters, \cite{Olver-Pohjanpelto-08}.
Assuming the prolonged action is regular, if it is also (locally) free, that is, the isotropy group at a submanifold jet $\bz\n$ is trivial or discrete, then the notion of a partial moving frame reduces to the usual definition of an equivariant  moving frame.

A (partial) moving frame induces its own invariantization process $\widehat \iota$ that maps differential functions $F$ and, more generally, differential forms $\omega $ on the hypersurface jet space to their (partially) invariant counterparts on $\hB\n$, with
$$\widehat \iota\,(\omega ) = (\hrho\n)^* \iota( \omega ),$$
where $\hrho\n\colon \hB\n \hookrightarrow \B\n$ denotes the inclusion map.
In practice, the invariantization process associated with a partial moving frame  simply amounts to replacing the pseudo-group parameters that appear in the original lifted invariants by their (partial) moving frame expressions that are obtained by solving the normalization equations; the resulting invariantized functions and forms can depend upon any remaining unnormalized pseudo-group parameters.

In the sequel, since all relations among differential functions and differential forms are preserved under pull-back, we will omit explicit reference to the pull-back map $(\hrho\n)^*$ and simply use $\iota$ for all the invariantization processes in order to streamline the notation.  Thus, at each stage in the computation, $Z, \oZ, U, V, V_J, \omega^Z,\omega^{\oZ},\omega^U$ will refer to the current partially normalized lifted invariants and forms. Once all the pseudo-group parameters have been normalized (when possible), these will become the sought-for differential invariants and invariant differential forms.

\subsection{Recurrence relations}

One of the most fundamental results in the theory of equivariant moving frames is the {\it universal recurrence formula}, \cite[Theorem 25]{Olver-Pohjanpelto-08}, which unlocks the structure of the algebra of (lifted) differential invariants. To write the formula, recall that the prolongation of the vector field \eqref{eq: v} to the submanifold jet bundle $\Jn$ is the vector field
\begin{equation}
\label{v-infty}
\vv\n = \xi \frac{\partial}{\partial z} + \oxi \frac{\partial}{\partial \oz} + \eta \frac{\partial}{\partial u} + \sum_{0 \leq \# J \leq n} \phi^J\frac{\partial}{\partial v_J},
\end{equation}
whose coefficients are given by the standard prolongation formula,  \cite{Olver-1995},
\begin{subequations}
\begin{equation}\label{eq: prolongation formula}
\phi^J(\bz\n,\zeta^{(n)}) = D_J(\phi - v_z\,\xi - v_{\oz}\,\oxi  - v_u \eta) + v_{J, z}\,\xi  + v_{J, \oz}\,\oxi  + v_{J, u}\,\eta ,
\end{equation}
and where $D_J = D_{j_1}\cdots D_{j_n}$ denotes the total derivative operator corresponding to the symmetric multi-index $J=(j_1,\ldots,j_n)$, with $j_\nu \in \{z,\oz,u\}$, of order with $\#J = n$.  One can, alternatively, use the recursive version
\begin{equation}\label{eq: prolongation formula 2}
\phi^{J,k} = D_k \phi^J - v_{J,z} \, D_k\xi  - v_{J,\oz}\, D_k\oxi  - v_{J,u}\, D_k\eta.
\end{equation}
\end{subequations}
For example, taking into account the infinitesimal determining equations \eqref{eq: order 1 infinitesimal determining equations}, the order 1 prolonged vector field coefficients are
\begin{equation}\label{eq: order 1 prolonged coefficients}
\begin{aligned}
\phi^z &= -v_z \,\xi_z - \i v_z^2\,\xi_u + \i v_z v_{\oz} \,\oxi_u - (\i+v_u)\,\eta_z + v_z\,\eta_u-v_zv_u\,\eta_v,
\\
\phi^{\oz} &= -v_{\oz}\, \oxi_{\oz} + \i v_{\oz}^2\, \oxi_u - \i v_z v_{\oz}\, \xi_u + (\i - v_u)\,\eta_{\oz} + v_{\oz}\, \eta_u -  v_{\oz}v_u\, \eta_v,
\\
\phi^u &= -v_z\,(1+\i v_u)\,\xi_u - v_{\oz}\,(1-\i v_u)\,\oxi_u-(1+v_u^2)\,\eta_v.
\end{aligned}
\end{equation}

The universal recurrence formula can then be written as follows.

\begin{Theorem}\label{rr}
If $F(z,\oz,u,v\n)$ is any differential function, then
\begin{equation}\label{rrF}
\begin{aligned}
d\, \iota(F) = \iota(dF) + \iota\bigl[\vv\n(F)\bigr].
\end{aligned}
\end{equation}
\end{Theorem}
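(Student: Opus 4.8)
The plan is to recognize \eqref{rrF} as the specialization to the holomorphic pseudo-group $\G$ of the universal recurrence formula \cite[Theorem 25]{Olver-Pohjanpelto-08}, and to reassemble its proof in the present coordinates. The point of departure is that invariantization is a pullback: writing $\Phi\colon \B\n\to\Jn$ for the map $\Phi(\bz\n,g\n)=\bZ\n=g\n\cdot\bz\n$ furnished by the prolonged action \eqref{eq: right action}, we have $\iota(F)=\Phi^*F=F(Z,\oZ,U,V\n)$. Since $\iota$ is an algebra homomorphism while $d$ and $\vv\n$ are derivations, both sides of \eqref{rrF} obey the Leibniz rule in $F$, so it will suffice to verify the identity on the coordinate generators $F\in\{z,\oz,u\}\cup\{v_J\}$ and then propagate it to arbitrary differential functions. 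Indeed, the chain rule gives $d\,\iota(F)=\iota(F_z)\,dZ+\iota(F_{\oz})\,d\oZ+\iota(F_u)\,dU+\sum_J\iota(F_{v_J})\,dV_J$, and substituting the fundamental recurrences below, then collecting terms, will regroup the right-hand side precisely into the horizontal piece $\iota(dF)$ and the group piece $\iota[\vv\n(F)]$.

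First I would establish these fundamental recurrences by splitting the (horizontal) differential on $\B\n$ --- with contact forms suppressed, as in our standing convention --- into a component along the lifted coframe $\omega^Z,\omega^{\oZ},\omega^U$ of \eqref{eq: omega} and a component along the Maurer--Cartan forms. Freezing the group jet $g\n$ isolates the former: the differentials $dZ,d\oZ,dU$ then reduce to $\omega^Z,\omega^{\oZ},\omega^U$, while $dV_J$ reduces to $\iota(dv_J)=\iota(v_{J,z})\,\omega^Z+\iota(v_{J,\oz})\,\omega^{\oZ}+\iota(v_{J,u})\,\omega^U$. Freezing instead the base jet $\bz\n$ isolates the group component. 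Because the lifted invariants are built from the prolonged action, the infinitesimal motion of $\bZ\n$ in the fibre directions is governed by the prolongation \eqref{v-infty} of the generator \eqref{eq: v}, evaluated at the lifted jet; invariantizing its coefficients through \eqref{eq: substitutions} turns $\xi,\oxi,\eta$ into the primitive Maurer--Cartan forms $\mu,\omu,\mcu$, and $\vv\n(v_J)=\phi^J$ into $\iota(\phi^J)$. This yields $dZ=\omega^Z+\mu$, $d\oZ=\omega^{\oZ}+\omu$, $dU=\omega^U+\mcu$, and $dV_J=\iota(dv_J)+\iota(\phi^J)$, which are exactly \eqref{rrF} on the generators, since $\vv\n(z)=\xi$, $\vv\n(\oz)=\oxi$, $\vv\n(u)=\eta$, and $\vv\n(v_J)=\phi^J$.

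The hard part will be the rigorous justification of the group component for the infinite-dimensional pseudo-group $\G$: that the fibre-direction variation of $\bZ\n$ is computed by the prolonged generator $\vv\n$ acting at the lifted jet, and that invariantizing its coefficients yields genuine Maurer--Cartan forms satisfying the linear relations \eqref{eq: MC relations}. For a finite-dimensional Lie group this is the classical computation behind the moving-frame recurrences, but here it must be underpinned by the intrinsic construction of the Maurer--Cartan forms and the structure theorem \cite[Theorem 6.1]{Olver-Pohjanpelto-05}, whereby the forms $\mu_A,\omu_A,\mcu_A,\mcv_A$ are obtained by restricting those of the diffeomorphism pseudo-group $\D$ and obey exactly the relations produced from the infinitesimal determining equations under the substitutions \eqref{eq: substitutions}. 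Granting this, the functoriality of prolongation --- that the infinitesimal generator of the prolonged action is the prolongation $\vv\n$ of $\vv$, with coefficients $\phi^J$ given by \eqref{eq: prolongation formula} --- closes the argument by identifying $\iota(\phi^J)$ as the group-direction derivative of $V_J$. By comparison, the horizontal component and the reduction to generators are routine bookkeeping.
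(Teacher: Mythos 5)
Your proposal is essentially correct, but note that the paper itself offers no proof of Theorem \ref{rr}: the result is quoted directly from \cite[Theorem 25]{Olver-Pohjanpelto-08}, so there is no internal argument to compare against. What you have written is a faithful reconstruction of the proof in that reference: the chain-rule reduction of the identity to the coordinate generators $z,\oz,u,v_J$; the splitting of the differential on the lifted bundle $\B\n$ into a horizontal component along $\omega^Z,\omega^{\oZ},\omega^U$ and a group component; and the identification of the group component of $dZ,d\oZ,dU,dV_J$ with the invariantized coefficients $\mu,\omu,\mcu,\iota(\phi^J)$ of the prolonged generator $\vv\n$ --- exactly the ingredients used by Olver and Pohjanpelto. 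You also correctly isolate the one genuinely nontrivial step for an infinite-dimensional pseudo-group, namely that the fibre-direction variation of the lifted invariants is governed by $\vv\n$ with Maurer--Cartan coefficients satisfying the relations \eqref{eq: MC relations}, and you delegate it to the intrinsic construction and structure theorem of \cite{Olver-Pohjanpelto-05}; since the paper handles the entire theorem by citation, your outline is at least as complete as the paper's own treatment.
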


The final ``correction'' term in \eqref{rrF} is obtained by invariantization of the infinitesimal generator coefficient \eqref{eq: prolongation formula} --- that is, by applying the substitutions \eqref{eq: substitutions} to the submanifold and vector field jets appearing therein.
In particular, setting $F = z, \oz, u, v_J$ in turn, we deduce the complete system of recurrence relations for the basic lifted invariants
\begin{equation}\label{rec-rel}
dZ = \omega^Z + \mu, \qquad
d\oZ = \omega^{\oZ} + \omu,\qquad
dU = \omega^U + \mcu, \qquad
dV_J=\varpi_J+\phi^J(\bZ\n,\bmu\n),
\end{equation}
where $\omega^Z,\omega^{\oZ},\omega^U$ are the lifted invariant horizontal forms \eqref{eq: omega}, while (again omitting contact forms)
\begin{equation}\label{varpi}
\varpi_J = \iota(dv_J) = \iota\bigl(v_{J,z}\,dz + v_{J,\oz}\,d\oz + v_{J,u}\,du\bigr) = V_{J,Z}\,\omega^Z + V_{J,\oZ}\, \omega^{\oZ} + V_{J,U}\,\omega^U.
\end{equation}
Pulling-back the recurrence relations \eqref{rec-rel} by a (partial) moving frame $\hrho\n$ yields the corresponding recurrence relations for the (partially) normalized invariants involving the (partially) normalized horizontal coframe and the (partially) normalized Maurer--Cartan forms.  As noted above, since pull-backs do not alter the basic equations, we will omit explicitly writing them from here on.

\begin{Remark}
In a number of our calculations, we will only need to know the group components in the recurrence formulae.  We thus write $\mu \equiv \mcu $ if they differ by a linear combination of the horizontal forms $\omega^Z,\omega^{\oZ},\omega^U$.  For example, the last recurrence relation in \eqref{rec-rel} can be written as
$dV_J \equiv \phi^J(\bZ\n,\bmu\n)$.
\end{Remark}

\begin{Remark}
In the universal recurrence formula \eqref{rrF}, the differential function $F$ can be replaced by any differential form on the hypersurface jet space, where $\vv\n$ acts via Lie differentiation and one inserts a wedge product between the invariantized vector field coefficients and the resulting differential forms in the correction term.  For example, taking $F$ to be the basic horizontal forms $dz, d\oz,du$, and noting that $dF = 0 $ in each case, we deduce the following recurrence formulae for the invariant horizontal one-forms \eqref{eq: omega}:
\begin{equation}\label{rrihf}
\begin{aligned}
d\omega^Z &= \mu_Z \wedge \omega^Z + \mu_{\oZ} \wedge \omega^{\oZ} + \mu_U \wedge \omega^U, \\
d\omega^{\oZ} &= \omu_Z \wedge \omega^Z + \omu_{\oZ} \wedge \omega^{\oZ} + \omu_U \wedge \omega^U, \\
d\omega^U &= \mcu _Z \wedge \omega^Z + \mcu _{\oZ} \wedge \omega^{\oZ} + \mcu _U \wedge \omega^U. \\
\end{aligned}
\end{equation}
\end{Remark}

The following formula, which will be of use later, follows from an application of the Leibniz rule to the prolongation formula \eqref{eq: prolongation formula}.  Specifically,
$$D_J \phi = \phi _J + \sum_K {J \choose K} v_K \,\phi _{J\setminus K,v} + \>\cdots,\qquad
D_J (v_z\,\xi)  = \sum_K {J \choose K} v_{K,z}\, \xi _{J\setminus K} + \>\cdots,$$
and similarly for the terms coming from $v_{\oz}\,\oxi$ and $v_u \eta$.  Here ${J \choose K}$ denotes the multinomial coefficient corresponding to the multi-indices $J,K$, and the omitted terms have the form of a positive integer multiplying one of these monomials
 \begin{equation}
\label{monomials}
\aligned
&v_{J_1} \cdots v_{J_{l-2}}v_{J_{l-1}} \phi _{J_l,v^{l-1}},\qquad  &
v_{J_1} \cdots v_{J_{l-2}}v_{J_{l-1},z} \xi _{J_l,v^{l-2}}, \\
&v_{J_1} \cdots v_{J_{l-2}}v_{J_{l-1},\oz} \oxi _{J_l,v^{l-2}}, &
v_{J_1} \cdots v_{J_{l-2}}v_{J_{l-1},u} \eta _{J_l,v^{l-2}},
\endaligned
\end{equation}
where $l \geq 3$, \ $\emptyset \ne J_\kappa \subsetneq J$  for $1\leq \kappa < l$, and $(J_1 \cdots J_l) = J$.  Using the determining equations \eqref{eq: infinitesimal determining equations} and their derivatives in the resulting formulae yields the prolonged vector field coefficient
 \begin{equation}
\label{phijkl}
\aligned
\phi^{z^j\oz^k u^{\ell}}&=-v_{z^j\oz^k u^{\ell}}\,\bigl[j\,\xi_{z}+k\,\overline\xi_{{\oz}}+(\ell-1)\, \eta_u\bigr]-\bigl[\ell\,v_{z^{j+1}\oz^k u^{\ell-1}} + \i N_{jk\ell}\bigr]\xi_{u}\\
&\qquad {} -\bigl[\ell\,v_{z^{j}\oz^{k+1} u^{\ell-1}} - \i \overline{N_{kj\ell}}\,\bigr]\overline\xi_{u} -\ell\, v_{z^j\oz^k u^{\ell-1}}\left[\,j\,\xi_{zu}+k\,\overline\xi_{{\oz}u}+\frac{\ell-3}2\,\eta_{uu}\,\right] \\
&\qquad{} - \i P_{jk\ell}\,\xi_{zu} + \i \overline{P_{kj\ell}}\;\overline\xi_{{\oz}u}+ Q_{jk\ell}\,\eta_{uu} + \>\cdots,
\endaligned
\end{equation}
whenever $j+k+\ell \geq 2$, and where the omitted terms depend on the other derivatives of $\xi,\overline\xi,\eta$ that are not displayed\footnote{Note that when $\ell = 0$,  terms involving $u^{\ell-1}$ are undefined, but have a vanishing coefficient and so do not appear.}.  The coefficients $N_{jk\ell},P_{jk\ell}, Q_{jk\ell}$ depend superquadratically\footnote{A polynomial is \emph{superquadratic} if it contains no constant or linear terms.} on the jet coordinates $v_J$, where $1 \leq \# J \leq j+k+\ell$ in the case of $N_{jk\ell}$,  while $1 \leq \# J \leq j+k+\ell-1$ in the case of $P_{jk\ell}, Q_{jk\ell}$. Moreover, a straightforward inductive argument shows that the coefficient of each monomial appearing therein is a strictly positive integer.
We also note that $P_{jk\ell}$ and $P_{kj\ell}$ are not complex conjugate polynomials. For example, $P_{zzu} = 4 v_z v_{zu} + 2 v_u v_{zz}$ while $P_{\oz\oz u} = 0$.

\section{Preliminary normalizations}\label{sec-NF-general-comp}

In this section we begin the process of constructing a normal form for  nondegenerate hypersurfaces in $\mathbb C^2$ by applying the method of equivariant moving frames.   Using the recurrence relations \eqref{rec-rel} all our computations can be performed symbolically, without requiring coordinate expressions for the prolonged action, the lifted forms \eqref{eq: omega}, or the Maurer--Cartan forms.  In order to efficiently construct a normal form, our principal objective is, at each successive order, to find a cross-section that allows us to normalize as many group parameters or, equivalently, Maurer--Cartan forms {\it as possible}.

Before starting the computations, we observe that the lifted invariants and Maurer--Cartan forms respect the following conjugation relations
\begin{equation*}
\overline{V_J}=V_{\overline J},\qquad
\overline{\mu_J}=\overline{\mu}_{\overline J}, \qquad
\overline{\mcu_J}=\mcu_{\overline J}.
\end{equation*}
All our chosen normalizations will respect these conjugation relations.  Accordingly and for brevity, we will sometimes omit formulae that follow from conjugation.

\subsection{Orders zero and one}

We now start the normalization process by investigating the order zero and order one recurrence relations.  First, from \eqref{rec-rel}, \eqref{varpi}, the order zero recurrence relations are
\begin{subequations}\label{eq: order 01 recurrence relations}
\begin{equation}
dZ = \omega^Z + \mu,\qquad
d\oZ=\omega^{\oZ}+\omu,\qquad
dU = \omega^U + \mcu,\qquad
dV = \varpi + \mcv,
\end{equation}
while, in light of \eqref{eq: order 1 prolonged coefficients}, the order one relations are
\begin{equation}
\begin{aligned}
dV_Z &= \varpi_Z -V_Z\mu_Z - \i V_Z^2\,\mu_U +\i V_ZV_{\oZ}\,\omu_U
-(\i+V_U)\,\mcu_Z + V_Z\, \mcu_U - V_ZV_U\,\mcu_V,\\
dV_{\oZ} &= \varpi_{\oZ} - V_{\oZ}\,\omu_{\oZ} + \i V_{\oZ}^2\, \omu_U
- \i V_ZV_{\oZ}\,\mu_U + (\i - V_U)\,\mcu_{\oZ}+ V_{\oZ}\,\mcu_U - V_{\oZ}V_U\, \mcu_V,\\
dV_U &= \varpi_U - V_Z\,(1+\i V_U)\,\mu_U - V_{\oZ}\,(1-\i V_U)\,\omu_U  -(1+V_U^2)\,\mcu_V.
\end{aligned}
\end{equation}
\end{subequations}
It is therefore possible to normalize
\begin{equation}\label{eq: order 01 normalizations}
Z = \oZ = U = V = V_Z = V_{\oZ} = V_U = 0,
\end{equation}
which effectively normalizes 7 of the pseudo-group parameters\footnote{The non-symbolic moving frame calculus would write out the explicit expressions for these lifted invariants and explicitly solve \eqref{eq: order 01 normalizations} for the pseudo-group parameters, with the (far more complicated) process continuing in a similar fashion throughout.}.
Substituting the normalizations \eqref{eq: order 01 normalizations} into the recurrence relations \eqref{eq: order 01 recurrence relations}, the left-hand sides all vanish while the right-hand sides simplify considerably.  Solving the resulting equations for the partially normalized Maurer--Cartan forms, we obtain
\begin{equation}\label{eq: order 01 partially normalized mc forms}
\mu = -\omega^Z,\quad
\omu = -\omega^{\oZ},\quad
\mcu = -\omega^U,\quad
\mcv = 0,\quad
\mcu_Z = -\i \varpi_Z,\quad
\mcu_{\oZ} = \i\varpi_{\oZ},\quad
\mcu_V = \varpi_U.
\end{equation}
As mentioned at the end of the previous section, in \eqref{eq: order 01 partially normalized mc forms} we omit writing the partial moving frame pull-backs to simplify the notation.

A careful analysis of the prolongation formula \eqref{eq: prolongation formula} yields the following result.

\begin{Lemma}
\label{lem-order-1}
For $k, \ell \geq 0$, we impose the normalizations
\begin{equation}\label{unorm}
V_{Z^{k+1}U^\ell}=V_{\oZ{}^{k+1}U^\ell}=V_{U^{\ell+1}}=0.
\end{equation}
Then, the corresponding phantom recurrence relations
\begin{itemize}
  \item[(a)]  $dV_{Z^{k+1}U^\ell}=dV_{\oZ{}^{k+1}U^\ell}=0$ can be solved for $\mcu_{Z^{k+1}U^\ell},\mcu_{\oZ{}^{k+1}U^\ell}$;
  \item[(b)]  $dV_{U^{\ell+1}}=0$ can be solved for $\mcu_{U^\ell V}$.
\end{itemize}
\end{Lemma}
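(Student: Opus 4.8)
The plan is to read off, directly from the recurrence relation \eqref{rec-rel}, the single Maurer--Cartan form that appears with a nonzero constant coefficient in each phantom relation, and to show that this form can be isolated. Recall that, modulo the lifted invariant horizontal forms, the last relation in \eqref{rec-rel} reads $dV_J \equiv \phi^J(\bZ\n,\bmu\n)$, and that at the order zero and one cross-section \eqref{eq: order 01 normalizations} we have $V_Z=V_{\oZ}=V_U=0$, while the zeroth order Maurer--Cartan forms $\mu,\omu,\mcu,\mcv$ all vanish modulo horizontal forms by \eqref{eq: order 01 partially normalized mc forms}. I would therefore analyze $\phi^J(\bZ\n,\bmu\n)$ at this partially normalized point, imposing in addition the normalizations \eqref{unorm}, which kill every \emph{pure} jet $V_{Z^{k+1}U^\ell}$, $V_{\oZ{}^{k+1}U^\ell}$, $V_{U^{\ell+1}}$. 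The goal in each case is to exhibit exactly one Maurer--Cartan form occurring linearly with a constant nonzero coefficient, all other contributions being either lifted horizontal forms or lower order Maurer--Cartan forms.

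For part (a) I would argue by induction on the order using the recursive prolongation formula \eqref{eq: prolongation formula 2}. The base case $\phi^z \equiv -\i\,\mcu_Z$ is immediate from \eqref{eq: order 1 prolonged coefficients} once $V_Z=V_{\oZ}=V_U=0$. For the inductive step I differentiate $\phi^{Z^{a}U^{b}}$ by $D_z$ or $D_u$. Using the determining equation $\phi_z=-\i\eta_z$, invariantized to $\mcv_Z=-\i\mcu_Z$, the leading generator term propagates as $-\i\,\eta_{z^{a}u^{b}}\mapsto -\i\,\mcu_{Z^{a}U^{b}}$, since the chain-rule corrections $v_z(\cdots)_v$ and $v_u(\cdots)_v$ vanish because $V_Z=V_U=0$. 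The remaining correction terms $-v_{J,z}\,D_\bullet\xi-v_{J,\oz}\,D_\bullet\oxi-v_{J,u}\,D_\bullet\eta$ in \eqref{eq: prolongation formula 2} either carry a factor of a pure jet ($V_{Z^{a+1}U^{b}}=V_{Z^{a}U^{b+1}}=0$) or, in the only delicate case $v_{J,\oz}=V_{Z^{a}\oZ U^{b}}$ (a mixed jet that need not vanish), multiply $\iota(D_z\oxi)=\omu_Z+V_Z\,\omu_V=0$ by \eqref{eq: MC relations}, or $\iota(D_u\oxi)=\omu_U$, the latter being a strictly lower order Maurer--Cartan form. Hence $\phi^{Z^{k+1}U^\ell}\equiv -\i\,\mcu_{Z^{k+1}U^\ell}$ plus lower order Maurer--Cartan forms, and $dV_{Z^{k+1}U^\ell}=0$ can be solved for $\mcu_{Z^{k+1}U^\ell}$. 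The conjugate computation, using $\phi_{\oz}=\i\eta_{\oz}$ and $\overline{\mcu_J}=\mcu_{\overline J}$, solves $dV_{\oZ{}^{k+1}U^\ell}=0$ for $\mcu_{\oZ{}^{k+1}U^\ell}$.

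Part (b) is cleaner: for the all-$u$ index $J=U^{\ell+1}$, every jet $v_{J,z}=V_{ZU^{\ell}}$, $v_{J,\oz}=V_{\oZ U^{\ell}}$, $v_{J,u}=V_{U^{\ell+1}}$ produced by one more differentiation in \eqref{eq: prolongation formula 2} is a \emph{pure} jet and hence vanishes by \eqref{unorm}, so \emph{all} of the correction terms drop out. Starting from $\phi^u\equiv -\mcu_V$ (from \eqref{eq: order 1 prolonged coefficients} at $V_Z=V_{\oZ}=V_U=0$) and repeatedly applying $D_u$, the determining equation $\phi_u=-\eta_v$ propagates the leading term to $-\eta_{u^\ell v}\mapsto -\mcu_{U^\ell V}$, the chain-rule pieces again vanishing because all pure jets $V_{U^{m}}$, $m\ge 1$, are zero. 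Thus $dV_{U^{\ell+1}}=0$ can be solved for $\mcu_{U^\ell V}$.

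The main obstacle is the bookkeeping in part (a): one must confirm that the target form $\mcu_{Z^{k+1}U^\ell}$ is \emph{never} reproduced, at full order $k+1+\ell$, multiplied by a surviving mixed jet, for otherwise the coefficient might not be a constant. This is guaranteed by the structure recorded around \eqref{phijkl}: the superquadratic correction monomials \eqref{monomials} all involve $l\ge 3$ factors, so any reappearance of an order $k+1+\ell$ generator must already sit in a term carrying a pure-jet or zeroth order factor that vanishes at the cross-section; the sole full-order generator is the leading $\phi_J$ term converted by the determining equations. Once this is verified, the three phantom relations are triangular in the three distinct basis Maurer--Cartan forms $\mcu_{Z^{k+1}U^\ell}$, $\mcu_{\oZ{}^{k+1}U^\ell}$, $\mcu_{U^\ell V}$, and the system is solvable exactly as claimed.
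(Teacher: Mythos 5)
Your proof is correct and takes essentially the same route as the paper's: both arguments isolate the unique full-order Maurer--Cartan form $-\i\,\mcu_{Z^{k+1}U^\ell}$ (resp.\ $-\mcu_{U^\ell V}$) arising from $\phi_{z^{k+1}u^\ell}=-\i\,\eta_{z^{k+1}u^\ell}$ (resp.\ $\phi_{u^{\ell+1}}=-\eta_{u^\ell v}$) via the determining equations, and then check that every remaining contribution either vanishes on the cross-section (pure jets and $V_Z=V_{\oZ}=V_U=0$) or involves Maurer--Cartan forms of strictly lower order, so the phantom relations are triangular and solvable. The only cosmetic difference is mechanism: the paper captures the top-order vector field jets in one step via the truncated total derivatives $\hD_z,\hD_{\oz},\hD_u$ and the leading-term expression $\bH(\phi^J)=\hD_J\phi - v_z\,\hD_J\xi - v_{\oz}\,\hD_J\oxi - v_u\,\hD_J\eta$, whereas you reach the same conclusion by induction on the recursive prolongation formula \eqref{eq: prolongation formula 2}, handling the surviving mixed-jet coefficients (such as $V_{Z^a\oZ U^b}$ multiplying $\iota(D_z\oxi)=\omu_Z+V_Z\,\omu_V=0$) explicitly.
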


\proof
Let
\[
\hD_z = \pp{}{z} + v_z \pp{}{v},\qquad
\hD_{\oz} = \pp{}{\oz} + v_{\oz} \pp{}{v},\qquad
\hD_{u} = \pp{}{u} + v_u \pp{}{v}
\]
denote the order zero truncation of the total derivative operators $D_z$, $D_{\oz}$, $D_u$.  In light of the prolongation formula \eqref{eq: prolongation formula}, the terms with the highest order vector field jets in $\phi^J$ are
\[
\bH(\phi^J) = \hD_J\phi - v_z\, \hD_J\xi - v_{\oz}\, \hD_J\oxi - v_u\, \hD_J\eta.
\]
Therefore
\begin{align*}
\bH(\phi^{z^{k+1}u^\ell}) &= \phi_{z^{k+1}u^\ell} + \text{terms involving $v_z$, $v_{\oz}$, $v_u$}
= -\i \eta_{z^{k+1}u^\ell} + \text{terms involving $v_z$, $v_{\oz}$, $v_u$},
\end{align*}
where we used the fact that $\phi_z = -\i \eta_z$.  Thus, the recurrence relation for $V_{Z^{k+1}U^\ell}$, when the normalizations \eqref{eq: order 01 normalizations} have been performed, is
\begin{equation}\label{lemma recurrence relation}
dV_{Z^{k+1}U^\ell} = \varpi_{Z^{k+1}U^\ell} - \i \mcu_{Z^{k+1}U^\ell} + \cdots,
\end{equation}
where the omitted correction terms involve partially normalized Maurer--Cartan forms of order $<k+\ell+1$.  In light of the recurrence relation \eqref{lemma recurrence relation}, it is possible to set $V_{Z^{k+1}U^\ell}=0$ and then solve the resulting phantom recurrence relation for the partially normalized Maurer--Cartan form $\mcu_{Z^{k+1}U^\ell}$.

A similar argument proves part (b), where now
\begin{align*}
\bH(\phi^{u^{\ell+1}}) &= \phi_{u^{\ell+1}} +  \text{terms involving $v_z$, $v_{\oz}$, $v_u$}= -\eta_{u^\ell v} + \text{terms involving $v_z$, $v_{\oz}$, $v_u$},
\end{align*}
thereby completing the proof.
\endproof

Assuming the Maurer--Cartan forms in \eqref{eq: order 01 partially normalized mc forms} and Lemma \ref{lem-order-1} have been normalized, the remaining (partially) unnormalized Maurer--Cartan forms  are
$\mu_{Z^k U^\ell}, \omu_{\oZ{}^k U^\ell}, \mcu_{U^\ell}$, with $k,\ell \in \N_0$ and $k+\ell >0$.

\subsection{Order two}

This order is of rather more interest as it exhibits the role of Levi nondegeneracy when constructing the desired normal form.
Taking into account the normalizations performed in Lemma \ref{lem-order-1}, the only remaining order two recurrence relation is, after simplification,
\begin{equation}\label{eq: VZoZ recurrence relation}
dV_{Z\oZ} = \varpi_{Z\oZ} + V_{Z\oZ}\, (\mcu_U -\mu_Z - \omu_{\oZ}) \equiv V_{Z\oZ}\, (\mcu_U -\mu_Z - \omu_{\oZ}).
\end{equation}
From the group component on the right-hand side, we deduce that $V_{Z\oZ}$ is a {\it relative invariant}, meaning it is mapped to a multiple of itself under the pseudo-group transformations. In particular, the condition $V_{Z\oZ}= 0$ that a hypersurface be degenerate is preserved.  From here on, we leave the degenerate case aside, and exclusively consider nondegenerate hypersurfaces where $V_{Z\oZ}\ne 0$.  We can then normalize
\begin{equation}\label{eq: VZoZ normalization}
V_{Z\oZ}=1.
\end{equation}
Substituting into the recurrence relation \eqref{eq: VZoZ recurrence relation} yields
\begin{equation}\label{alpha_U normalization}
\mcu_U = -\,\varpi_{Z\oZ} + \mu_Z + \omu_{\oZ}.
\end{equation}

Inspecting  the prolonged vector coefficients $\phi^{z\oz u^\ell}$, with $\ell \geq 1$, as given in \eqref{eq: prolongation formula}, we arrive to the following more general result.

\begin{Lemma}
\label{lem-ord-2}
For $\ell\geq 1$,  the phantom recurrence relation for the normalized invariant $V_{Z\oZ U^{\ell}}=0$ can be solved for the (partially) normalized Maurer--Cartan form $\mcu_{U^{\ell+1}}$.
\end{Lemma}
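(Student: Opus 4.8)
The plan is to mimic the structure of the proof of Lemma \ref{lem-order-1}, but now to extract a \emph{subleading} Maurer--Cartan form from the recurrence relation. I would begin by writing the relevant phantom recurrence relation, which by \eqref{rec-rel} reads $dV_{Z\oZ U^\ell} \equiv \phi^{z\oz u^\ell}(\bZ\n,\bmu\n)$ modulo the horizontal forms, and then determine which partially normalized Maurer--Cartan forms survive in the invariantized prolongation coefficient once all the normalizations accumulated so far have been imposed: namely \eqref{eq: order 01 normalizations}, those of Lemma \ref{lem-order-1}, the Levi normalization \eqref{eq: VZoZ normalization}, and (inductively) $V_{Z\oZ U^{\ell'}}=0$ for $1\leq \ell'<\ell$.

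First I would dispose of the top-order ($\#J=\ell+2$) contributions exactly as in Lemma \ref{lem-order-1}. Applying the operator $\bH$ and using the order-one normalizations $V_Z=V_{\oZ}=V_U=0$ forces every summand of $-v_z\hD_J\xi-v_{\oz}\hD_J\oxi-v_u\hD_J\eta$ to vanish, so that $\bH(\phi^{z\oz u^\ell})$ collapses to $\iota(\phi_{z\oz u^\ell})$; and since $\phi_{z\oz}=0$ by \eqref{eq: 2nd order jet relations}, we have $\phi_{z\oz u^\ell}=\partial_u^\ell\phi_{z\oz}=0$. Hence all order $\ell+2$ Maurer--Cartan forms drop out after normalization, and the highest-order $\mcu$-type form that can remain is $\mcu_{U^{\ell+1}}$, the competing order $\ell+1$ forms $\mcu_{Z U^\ell},\mcu_{\oZ U^\ell},\mcu_{U^\ell V}$ having already been normalized in Lemma \ref{lem-order-1}.

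The crux is to show that $\mcu_{U^{\ell+1}}$ occurs with a nonzero coefficient. Here I would use the recursive prolongation formula \eqref{eq: prolongation formula 2} in the form $\phi^{z\oz u^\ell}=D_u\phi^{z\oz u^{\ell-1}}-v_{z^2\oz u^{\ell-1}}D_u\xi - v_{z\oz^2 u^{\ell-1}}D_u\oxi - v_{z\oz u^\ell}D_u\eta$, and track the pure vector field jet $\eta_{u^{\ell+1}}$. Since $\xi,\oxi,\eta$ depend only on the base coordinates, $\eta_{u^{\ell+1}}$ can be produced only when $D_u=\partial_u+v_u\partial_v+\cdots$ differentiates the $\eta_{u^\ell}$ summand of $\phi^{z\oz u^{\ell-1}}$; hence the coefficient of $\eta_{u^{\ell+1}}$ in $\phi^{z\oz u^\ell}$ equals, as a polynomial in the jet coordinates, the coefficient of $\eta_{u^\ell}$ in $\phi^{z\oz u^{\ell-1}}$. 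Iterating down to $\ell=1$, this coefficient is independent of $\ell$ and coincides with the coefficient of $\eta_{uu}$ in $\phi^{z\oz u}$, which \eqref{phijkl} (with $j=k=1$) displays as $v_{z\oz}+Q_{111}$. Invariantizing and recalling that $Q_{111}$ is superquadratic in jets of order at most $2$, every one of its monomials carries a first-order jet factor --- exactly as in the sample $P_{zzu}=4v_zv_{zu}+2v_uv_{zz}$ --- and so vanishes under $V_Z=V_{\oZ}=V_U=0$; thus the coefficient collapses to the normalized Levi form $V_{Z\oZ}=1$. Even were a purely second-order monomial present, the positivity of the integer coefficients noted after \eqref{phijkl} would only increase the value, so the coefficient is in any case at least $1\ne 0$.

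With $\mcu_{U^{\ell+1}}$ isolated with nonzero coefficient, imposing $V_{Z\oZ U^\ell}=0$ turns the phantom relation into a linear equation that is solvable for $\mcu_{U^{\ell+1}}$ as a combination of the invariant horizontal forms and the remaining, lower-order or differently typed, Maurer--Cartan forms, which is precisely the assertion of the lemma. I expect the principal difficulty to be the uniform-in-$\ell$ control of this coefficient: one must carefully justify the telescoping recursion that reduces $\eta_{u^{\ell+1}}$ to the base case $\ell=1$, and confirm that no further order $\ell+2$ form survives the earlier normalizations --- the latter being supplied by the $\bH$ computation together with the determining equation $\phi_{z\oz}=0$.
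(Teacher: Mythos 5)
Your proposal is correct in substance and follows essentially the same route as the paper: both arguments isolate the jet $\eta_{u^{\ell+1}}$ inside $\phi^{z\oz u^{\ell}}$, dispose of all order $\ell+2$ forms via $\phi_{z\oz}=\eta_{z\oz}=0$ together with the vanishing of $V_Z,V_{\oZ},V_U$, show that the coefficient of $\eta_{u^{\ell+1}}$ invariantizes to the normalized Levi invariant $V_{Z\oZ}=1\neq 0$, and then solve the phantom relation for $\mcu_{U^{\ell+1}}$ modulo $\mu_{ZU^{\ell}}$, $\omu_{\oZ U^{\ell}}$ and lower-order forms. Where you differ is the bookkeeping of the key coefficient: the paper computes $D_u^{\ell}\phi^{z\oz}$ in one shot, using the inductive hypotheses $V_{Z\oZ U^{j}}=0$, $j<\ell$, and the Lemma \ref{lem-order-1} normalizations to discard the omitted terms in \eqref{eq: lemma2 prolongation term}, while you telescope one $D_u$ at a time via \eqref{eq: prolongation formula 2} down to an $\ell=1$ base case read off \eqref{phijkl}. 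Your positivity fallback for $Q_{111}$ is well taken: superquadraticity alone does not force a first-order jet factor into every monomial, but since all lifted invariants of order $\leq 2$ have by this stage been normalized to $0$ or to $V_{Z\oZ}=1$, the strict positivity of the integer coefficients gives a normalized coefficient $\geq 1$ in any case.

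One step is overstated, though harmlessly so: the verbatim polynomial identity ``coefficient of $\eta_{u^{\ell+1}}$ in $\phi^{z\oz u^{\ell}}$ equals coefficient of $\eta_{u^{\ell}}$ in $\phi^{z\oz u^{\ell-1}}$'' fails once you work, as matching your base case against the reduced formula \eqref{phijkl} obliges you to, modulo the determining equations. Each application of $D_u$ also produces the chain terms $v_u\,\zeta_{K,v}$, and the reductions $\eta_{u^{a}v^{2}}=-\eta_{u^{a+2}}$ (and $\phi_v=\eta_u$, if $\phi$-jets have not been pre-eliminated) feed these back into the coefficient of $\eta_{u^{\ell+1}}$. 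For instance, $\phi^{z\oz}$ contains the term $-\bigl(v_{z\oz}v_u+v_zv_{\oz u}+v_{zu}v_{\oz}\bigr)\eta_v$, and applying $D_u$ to it contributes $v_u\bigl(v_{z\oz}v_u+v_zv_{\oz u}+v_{zu}v_{\oz}\bigr)\eta_{uu}$; so the telescoped coefficient is not literally independent of $\ell$ as a polynomial, and at the next step the coefficient of $\eta_{u^{3}}$ in $\phi^{z\oz u^{2}}$ acquires a further correction $-v_u$ times the coefficient of $\eta_{uv}$ in $\phi^{z\oz u}$. The repair is immediate and should be stated: every such correction carries a factor $v_u$ (or another first-order jet), which invariantizes to $V_U=0$ under the order-one normalizations, so your recursion holds on the cross-section --- which is all the argument needs --- and the normalized coefficient of $\mcu_{U^{\ell+1}}$ is $V_{Z\oZ}=1$ plus a nonnegative integer, hence nonzero, exactly as you conclude. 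This is, in effect, what the paper's proof accomplishes by restricting $D_u^\ell\phi^{z\oz}$ to $\mK$ before identifying the surviving term $v_{z\oz}\,\eta_{u^{\ell+1}}$.
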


\begin{proof}
We prove the assertion using induction on $\ell$.   For $\ell=1$, we set $V_{Z\oZ U}=0$.  After using the prolongation formula \eqref{eq: prolongation formula} and performing the normalizations in Lemma \ref{lem-order-1} and equation \eqref{eq: VZoZ normalization}, we find that
$$
0 = dV_{Z\oZ U} = \varpi_{Z\oZ U} + \mcu_{UU} - \mu_{ZU}-\mu_{\oZ U}  - V_{Z\oZ U}\,\mu_Z -V_{Z\oZ U}\, \omu_{\oZ} - V_{Z^2\oZ}\,\mu_U - V_{Z\oZ{}^2}\,\omu_U,
$$
which we can solve for the partially normalized Maurer--Cartan form $\mcu_{UU}$.

Now, for $\ell \geq 2$, assume that by setting $V_{Z\oZ U}=0$, $\ldots$, $V_{Z\oZ U^{\ell-1}}=0$, the corresponding recurrence equations can be solved for the partially normalized Maurer--Cartan forms $\mcu_{U^2}, \ldots,\mcu_{U^\ell}$.  By setting $V_{Z\oZ U^\ell} = 0$ our goal is to show that the corresponding recurrence relation can be used to solve for $\mcu_{U^{\ell+1}}$.  Under the correspondence \eqref{eq: substitutions} we must keep track of the vector field jet $\eta_{u^{\ell+1}}$ in the prolonged vector field coefficient $\phi^{z\oz u^\ell}$. First, by the prolongation formula \eqref{eq: prolongation formula} and \eqref{eq: order 1 infinitesimal determining equations}, we have
\begin{align*}
\phi^{z\oz u^\ell}&=D_u^\ell D_z D_{\oz} \big(\phi-v_z\,\xi-v_{\oz}\,\overline\xi-v_u\,\eta\big)+v_{zz\oz u^\ell}\,\xi+v_{z\oz\oz u^\ell}\,\overline\xi+v_{z\oz u^{\ell+1}}\,\eta
\\
&= D_u^\ell \big(\phi^{z\oz}-v_{zz\oz}\,\xi-v_{z\oz\oz}\,\overline\xi-v_{z\oz u}\,\eta\big)+v_{zz\oz u^\ell}\,\xi+v_{z\oz\oz u^\ell}\,\overline\xi+v_{z\oz u^{\ell+1}}\,\eta.
\end{align*}
In light of the infinitesimal determining equations \eqref{eq: infinitesimal determining equations}, the vector field jet $\eta_{u^{\ell+1}}$ will only originate from the derivatives of $\eta$ and $\phi$.  Thus, we set the terms involving  $\xi$ and $\oxi$ and  their derivatives aside and write
\begin{equation}\label{phi-zozul}
\aligned
\phi^{z\oz u^\ell} &=D_u^\ell \big(\phi^{z\oz}-v_{z\oz u}\,\eta\big)+v_{z\oz u^{\ell+1}}\,\eta+\cdots
&= D_u^\ell \phi^{z\oz} - D_u^\ell(v_{z\oz u}\,\eta) + v_{z\oz u^{\ell+1}}\,\eta + \cdots \\
&= D_u^\ell \phi^{z\oz} + \sum_{j=1}^\ell A_j v_{z\oz u^j} + \cdots,
\endaligned
\end{equation}
where the coefficients $A_j$ depend on $v_u$, $\ldots$, $v_{u^\ell}$, and derivatives of $\eta$.  Since by hypothesis $V_{Z\oZ U^j} = 0$, $j=1,\ldots, \ell$, are vanishing phantom invariants, we focus on the first term of \eqref{phi-zozul}.  First,
\[
\phi^{z\oz}=v_{z\oz}\,\big(\eta_u-\overline\xi_{\oz}-\xi_z\big)+\text{terms involving } v_z, v_{\oz}, v_u, v_{zu}, v_{\oz u},
\]
so that
\begin{equation}\label{eq: lemma2 prolongation term}
D_u^\ell(\phi^{z\oz})=v_{z\oz}\,\big(\eta_{u^{\ell+1}}-\overline\xi_{\oz u^\ell}-\xi_{z u^\ell}\big)+\text{terms  involving } \ {v_{zu^l}, v_{\oz u^l}, v_{u^l}, v_{z\oz u^j}},
\end{equation}
where $0\leq l$ and $1\leq j \leq \ell$.  By Lemma \ref{lem-order-1} and the assumptions of the current lemma, since the lifted invariants $V_{ZU^l} = V_{\oZ U^l} = V_{U^l} = V_{Z\oZ U^j} = 0$ are set to zero, the terms omitted in \eqref{eq: lemma2 prolongation term} vanish in the recurrence relation of $V_{Z\oZ U^\ell}$.  Thus we have that
\[
0= dV_{Z\oZ U^\ell} = \varpi_{Z\oZ U^\ell} + \mcu_{U^{\ell+1}} - \omu_{\oZ U^\ell} - \mu_{Z U^\ell} + \cdots,
\]
where the omitted terms involve the Maurer--Cartan forms $\mu_{U^j}$, $\omu_{U^j}$. The last equation can thus be solved for the partially normalized Maurer--Cartan form $\mcu_{U^{\ell+1}}$.
\end{proof}

At this point we have normalized the Maurer--Cartan forms $\mcu_{U^\ell V}$, $\mcu_{Z^k U^\ell}$, $\mcu_{\oZ{}^k U^\ell}$ for all $k, \ell \in \N_0$.  It remains to normalize the Maurer--Cartan forms $\mu_{Z^k U^\ell}, \omu_{\oZ{}^k U^\ell}$, when $k+\ell\geq 1$.

\subsection{Order three}

Taking into account Lemmas \ref{lem-order-1} and \ref{lem-ord-2}, the only remaining non-constant partially normalized third order invariants are $V_{Z^2\oZ},V_{Z\oZ{}^2}$.  After simplification, we find that
\[
dV_{Z^2\oZ}= (V_{Z^3\oZ }-V_{Z^2 \oZ}^2)\,\omega^z + (V_{Z^2\oZ{}^2}-V_{Z\oZ{}^2}V_{Z^2\oZ})\,\omega^{\oz} + V_{Z^2\oZ U}\,\omega^u - \mu_{Z^2} + 4\i \,\omu_U-V_{Z^2\oZ}\,\mu_Z,
\]
and similarly for its conjugate.  It is therefore possible to set $V_{Z^2\oZ}=V_{Z\oZ{}^2}=0$. Solving the corresponding phantom recurrence relations yields the partially normalized Maurer--Cartan forms
\[
\mu_{Z^2}=\varpi_{Z^2\oZ}+4\i\overline\mu_U\qquad \omu_{\oZ{}^2} = \varpi_{Z\oZ{}^2}-4\i\mu_U.
\]
By an argument similar to Lemma \ref{lem-ord-2}, we arrive at the following result.

\begin{Lemma}\label{lem-ord-3}
For $k, \ell\geq 0$, the recurrence relations for the phantom invariants $$V_{Z^{k+2}\oZ U^{\ell}}=V_{Z\oZ{}^{k+2}U^\ell}=0$$
can be solved for the (partially) normalized Maurer--Cartan forms $\mu_{Z^{k+2}U^\ell},\omu_{\oZ{}^{k+2}U^\ell}$.
\end{Lemma}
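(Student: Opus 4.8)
The plan is to mirror the proof of Lemma \ref{lem-ord-2}, the only change being that we now track the single vector field jet $\xi_{z^{k+2}u^\ell}$, which under the invariantization substitutions \eqref{eq: substitutions} becomes the Maurer--Cartan form $\mu_{Z^{k+2}U^\ell}$ that we wish to solve for. I would argue by induction on $k+\ell$, assuming that Lemmas \ref{lem-order-1} and \ref{lem-ord-2}, the order-three normalizations $V_{Z^2\oZ}=V_{Z\oZ{}^2}=0$, and all lower-order instances of the present lemma have already been carried out. Consequently every $V$-jet of the form $V_{Z^{a+1}U^b}$, $V_{\oZ{}^{a+1}U^b}$, $V_{U^b}$, $V_{Z\oZ U^b}$, $V_{Z^{a+2}\oZ U^b}$, $V_{Z\oZ{}^{a+2}U^b}$ of order strictly below $k+\ell+3$ has been set to zero, and only $V_{Z^{k+2}\oZ U^\ell}$ and its conjugate remain unnormalized at the current order.

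The heart of the matter is to locate the coefficient of $\xi_{z^{k+2}u^\ell}$ inside $\phi^{z^{k+2}\oz u^\ell}$. By the prolongation formula \eqref{eq: prolongation formula}, the only source of $\xi$-jets is the term $-D_J(v_z\,\xi)$ with $J=z^{k+2}\oz u^\ell$; expanding via the Leibniz rule \eqref{monomials} produces $-\sum_K \binom{J}{K}\,v_{K,z}\,\xi_{J\setminus K}+\cdots$. To obtain the target jet $\xi_{z^{k+2}u^\ell}$ one needs the complementary index $J\setminus K=z^{k+2}u^\ell$, which forces $K=\oz$; every other splitting either differentiates $\xi$ with respect to $\oz$, and hence annihilates it because $\xi_{\oz}=0$ by \eqref{eq: order 1 infinitesimal determining equations}, or leaves a factor $v_{K,z}=v_{z\oz z^a u^b}$ with $a+b>0$, that is, a higher $V$-jet already normalized to zero by the inductive hypothesis. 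The surviving $K=\oz$ contribution is $-v_{z\oz}\,\xi_{z^{k+2}u^\ell}$, so after the Levi normalization \eqref{eq: VZoZ normalization}, with $v_{z\oz}=V_{Z\oZ}=1$, the coefficient of $\mu_{Z^{k+2}U^\ell}$ equals $-1$. This is the exact analogue of the factor $v_{z\oz}$ multiplying $\eta_{u^{\ell+1}}$ in \eqref{eq: lemma2 prolongation term} that drove Lemma \ref{lem-ord-2}; note that $\xi_{z^{k+2}u^\ell}$ has order $k+\ell+2$, one below the top order $k+\ell+3$, which is why it is ``pulled down'' by the Levi factor rather than appearing among the leading jets.

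It then remains to confirm that no other still-unnormalized Maurer--Cartan form can occur, so that the phantom equation $0=dV_{Z^{k+2}\oZ U^\ell}$ is genuinely solvable for $\mu_{Z^{k+2}U^\ell}$. As in Lemma \ref{lem-order-1}, the top-order part $\bH(\phi^{z^{k+2}\oz u^\ell})$ reduces, after the order-one normalizations \eqref{eq: order 01 normalizations}, to $\hD_z^{k+2}\hD_{\oz}\hD_u^\ell\,\phi$, which by the determining equations involves only $\eta$- and $\phi$-jets; these invariantize, via \eqref{rec-rel}, to already-normalized forms $\mcu$ of order $\le k+\ell+1$ or to the horizontal coframe. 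All other $\xi$- and $\oxi$-jets arise, exactly as above, with coefficients proportional to $V$-jets that the inductive hypothesis sends to zero, while the terms proportional to $V_{Z^{k+2}\oZ U^\ell}$ itself (the relative-invariant piece displayed in \eqref{phijkl}) drop out upon imposing $V_{Z^{k+2}\oZ U^\ell}=0$. Thus the lone surviving new form is $\mu_{Z^{k+2}U^\ell}$, with nonzero coefficient $-1$, and the equation solves for it; the conjugate computation, using $\overline{V_{Z\oZ{}^{k+2}U^\ell}}=V_{Z^{k+2}\oZ U^\ell}$, solves the companion relation for $\omu_{\oZ{}^{k+2}U^\ell}$. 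I expect the main obstacle to be precisely this bookkeeping: one must verify that $\xi_{z^{k+2}u^\ell}$ is reached by a unique Leibniz splitting and that every competing contribution is killed either by $\xi_{\oz}=0$ or by a previously normalized $V$-jet. Once that is in place, the value of the coefficient follows immediately from Levi nondegeneracy, and the recursion closes.
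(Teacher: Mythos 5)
Your identification of the decisive term is correct and matches the paper's own route: the paper proves this lemma ``by an argument similar to Lemma \ref{lem-ord-2}'' and writes out the computation in full later, in the proof of part \ref{mu-z-j+2-u-l-general} of Proposition \ref{prop-gen-patt}, where the coefficient $-1$ of $\xi_{z^{k+2}u^\ell}$ arises from precisely your unique Leibniz splitting $K=\oz$, i.e., from the term $v_{z\oz}\,\xi_z$ after the Levi normalization $V_{Z\oZ}=1$. The uniqueness of that splitting, the disposal of the superquadratic monomials, and the role of Levi nondegeneracy are all as you say, and they are exactly what solvability for $\mu_{Z^{k+2}U^\ell}$ requires.

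However, one subsidiary claim in your last paragraph is false: it is \emph{not} true that ``every competing contribution is killed either by $\xi_{\oz}=0$ or by a previously normalized $V$-jet,'' nor that $\mu_{Z^{k+2}U^\ell}$ is ``the lone surviving new form.'' In the expansion of $-D_J(v_{\oz}\,\oxi)$ with $J=z^{k+2}\oz u^\ell$, the splittings in which the derivative of $v_{\oz}$ absorbs all of $z^{k+2}\oz u^{j}$ leave the factor $v_{z^{k+2}\oz{}^2 u^{j}}$ multiplying $\oxi_{u^{\ell-j}}$; these lifted invariants $V_{Z^{k+2}\oZ{}^2U^{j}}$ are \emph{non-phantom} and are not normalized to zero at this stage (for $k\geq 2$ they are never normalized). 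Moreover, when $k=0$ the quadratic piece $2\,v_{\oz}v_{z\oz}\,\oxi_v$ survives once $v_{\oz}$ is differentiated to $v_{z\oz}=1$, contributing $-2\i\,\oxi_{u^{\ell+1}}$, and the ``already-normalized'' form $\mcu_{ZU^{\ell+1}}\equiv \i\,\omu_{U^{\ell+1}}$ is itself a multiple of a still-unnormalized form. The resulting phantom relations are, modulo horizontal forms, $0=dV_{Z^{2}\oZ U^\ell}\equiv 4\i\,\omu_{U^{\ell+1}}-\mu_{Z^{2}U^\ell}$ for $k=0$, and $0\equiv -\mu_{Z^{k+2}U^\ell}-\sum_{j=0}^{\ell}\binom{\ell}{j}V_{Z^{k+2}\oZ{}^2U^{\ell-j}}\,\omu_{U^j}$ for $k\geq 1$, exactly as in part \ref{mu-z-j+2-u-l-general} of Proposition \ref{prop-gen-patt}. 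This does not damage the lemma: the surviving forms are the distinct forms $\omu_{U^j}$ (unnormalized until Lemma \ref{lem-ords-4-5} and order $7$), and no $\omu_{\oZ{}^{k+2}U^\ell}$ appears, so the two phantom equations decouple and each is solvable for its own Maurer--Cartan form with coefficient $-1$. But the solution is a linear combination of the $\omu_{U^j}$ with lifted-invariant coefficients, not zero, and your bookkeeping claim should be weakened accordingly.
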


\subsection{Orders four and five}

In light of the normalizations made in the previous subsections, we now consider the recurrence relations for the remaining non-phantom partially normalized invariants of orders four and five, namely  $V_{Z^2\oZ{}^2},V_{Z^2\oZ{}^3}$, and $V_{Z^3\oZ{}^2}$.  After simplification, we obtain
\begin{align*}
dV_{Z^2\oZ{}^2}&=\varpi_{Z^2\oZ{}^2}-V_{Z^2\oZ{}^2}\,(\mu_Z+\overline\mu_{\oZ})-4\i \,(\mu_{ZU}-\overline\mu_{\oZ U}),
\\
dV_{Z^3\oZ{}^2} &= \varpi_{Z^3\oZ{}^2} - V_{Z^3\oZ{}^2}\,(2\,\mu_Z+\omu_{\oZ}) + 24 \,\omu_{U^2},
\\
dV_{Z^2\oZ{}^3} &= \varpi_{Z^2\oZ{}^3} - V_{Z^2\oZ{}^3}\,(\mu_Z+2\,\omu_{\oZ}) + 24 \,\mu_{U^2}.
\end{align*}
It is therefore possible to set $V_{Z^2\oZ{}^2}=V_{Z^3\oZ{}^2}=V_{Z^2\oZ{}^3}=0$ and use the resulting phantom recurrence relations to solve for the (partially) normalized ${\rm Im} \> \mu_{ZU},\mu_{U^2}, \omu_{U^2}$.  More generally, we have the following result.

\begin{Lemma}\label{lem-ords-4-5}
For $\ell\geq 0$, the recurrence relation for the phantom invariant(s)
\begin{itemize}
  \item[$1)$]  $V_{Z^2\oZ{}^2 U^\ell}=0$ can be solved for the real Maurer--Cartan form ${\rm Im} \> \mu_{ZU^{\ell+1}}$.
  \item[$2)$] $V_{Z^3\oZ{}^2 U^\ell}=V_{Z^2\oZ{}^3U^\ell}=0$ can be solved for the Maurer--Cartan forms $\mu_{U^{\ell+2}},\overline\mu_{U^{\ell+2}}$.
\end{itemize}
\end{Lemma}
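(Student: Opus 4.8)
The statement of Lemma \ref{lem-ords-4-5} is a parametrized generalization of the order-four and order-five computations just displayed, exactly as Lemma \ref{lem-ord-2} generalized the single order-two normalization and Lemma \ref{lem-ord-3} generalized the order-three one. The plan is therefore to imitate the proof of Lemma \ref{lem-ord-2}: proceed by induction on $\ell$, examine the recurrence relation $dV_{Z^2\oZ{}^2U^\ell}$ (respectively $dV_{Z^3\oZ{}^2U^\ell}$, $dV_{Z^2\oZ{}^3U^\ell}$), and isolate the single highest-order Maurer--Cartan form that appears linearly with a nonvanishing constant coefficient. For part $1)$, the relevant coefficient is $V_{Z\oZ}=1$ (the Levi normalization \eqref{eq: VZoZ normalization}), which guarantees that the term carrying $\mu_{ZU^{\ell+1}}$ survives; the reality of $V_{Z^2\oZ{}^2U^\ell}$ forces the solvable combination to be the imaginary part ${\rm Im} \> \mu_{ZU^{\ell+1}}$, just as in the $\ell=0$ display where $4\i(\mu_{ZU}-\omu_{\oZ U})$ appears. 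For part $2)$, the order-five display shows the constant $24$ multiplying $\omu_{U^2}$ (resp.\ $\mu_{U^2}$); the induction must track that the analogous monomial $\mu_{U^{\ell+2}}$ survives with a nonzero integer coefficient.

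First I would write $\phi^{z^2\oz^2 u^\ell}$ using the prolongation formula \eqref{eq: prolongation formula}, peeling off $D_u^\ell$ applied to $\phi^{z^2\oz^2}$ exactly as in \eqref{phi-zozul}, and then invoke \eqref{phijkl} with $(j,k)=(2,2)$ (resp.\ $(3,2)$ and $(2,3)$) to read off precisely which vector-field jets appear at top order. The key observation, which \eqref{phijkl} is designed to supply, is that the coefficient of $\xi_{zu}$ in $\phi^{z^2\oz^2u^\ell}$ is $-\i P_{22\ell}$, which is superquadratic in the $v_J$ and hence vanishes after the lower-order normalizations \eqref{eq: order 01 normalizations}, \eqref{unorm}; what remains is the genuinely linear contribution from $\xi_{zu^{\ell+1}}$ (equivalently $\mu_{ZU^{\ell+1}}$) carried by the $-v_{z^3\oz^2u^{\ell}}\,\xi_u$--type and $v_{z\oz}$--type leading terms. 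Substituting \eqref{eq: substitutions} and using the already-normalized forms from \eqref{eq: order 01 partially normalized mc forms} and Lemmas \ref{lem-order-1}--\ref{lem-ord-3}, every Maurer--Cartan form of order $< \ell+2$ has already been expressed in terms of the horizontal coframe and the invariants, so those contribute only to the horizontal part $\varpi_{\cdots}$ and to known correction terms; hence the phantom equation $dV_{Z^2\oZ{}^2U^\ell}=0$ reduces to a single linear equation solvable for ${\rm Im} \> \mu_{ZU^{\ell+1}}$.

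For the inductive step I would assume that $V_{Z^2\oZ{}^2}=\cdots=V_{Z^2\oZ{}^2U^{\ell-1}}=0$ have already been used to normalize ${\rm Im} \> \mu_{ZU},\dots,{\rm Im} \> \mu_{ZU^{\ell}}$, and likewise for the order-five family, so that in the recurrence for the next invariant the only \emph{new} unnormalized Maurer--Cartan form of top order is the one claimed. This is the same bookkeeping device used in Lemma \ref{lem-ord-2}: the terms in \eqref{phijkl} of lower order in the jets either vanish because the corresponding lifted invariants have been set to zero, or they involve Maurer--Cartan forms whose normalizations are already established and so get absorbed into the ``$\cdots$'' correction terms. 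The two branches of the lemma are independent only in which form they isolate, so I would handle part $2)$ in parallel, noting that there the two conjugate phantom relations $V_{Z^3\oZ{}^2U^\ell}=V_{Z^2\oZ{}^3U^\ell}=0$ together deliver both $\mu_{U^{\ell+2}}$ and $\overline\mu_{U^{\ell+2}}$ (rather than just a real or imaginary part), matching the $\ell=0$ display.

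The main obstacle, as in the earlier lemmas, is purely combinatorial rather than conceptual: verifying that the integer coefficient of the surviving top-order jet is genuinely nonzero, so that the phantom equation can actually be solved for the named Maurer--Cartan form. For part $1)$ this is clear because the coefficient descends from $V_{Z\oZ}=1$; for part $2)$ one must confirm that the analogue of the constant $24$ in the order-five display remains nonzero for all $\ell$, which follows from the remark after \eqref{phijkl} that every monomial in the prolongation formula carries a strictly positive integer coefficient, together with a short check that no cancellation occurs between the $\xi_u$-- and $\eta_{uu}$--contributions at top order. Beyond this nonvanishing check, the argument is a routine transcription of the $\ell=0$ computation through the induction, and I would keep the displayed recurrences schematic (writing ``$+\cdots$'' for the absorbed lower-order terms) rather than expanding them in full.
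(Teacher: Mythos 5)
Your overall architecture is the paper's own: the lemma is proved (within the proof of Proposition \ref{prop-gen-patt}, parts \ref{Im-z-u-l} and \ref{mu-u-l+2}) exactly by peeling off $D_u^\ell$, restricting to the cross-section \eqref{cross-section}, computing $\phi^{z^2\oz^2}\big|_{\mK} = 8\,{\rm Im}\>\xi_{zu}+2\,\eta_{uv}$ and $\phi^{z^2\oz^3}\big|_{\mK} = 6\,\xi_{u^2}+6\i\,\oxi_{\oz^2u}-6\i\,\eta_{\oz u^2}$, and solving the resulting phantom relations after substituting the previously normalized Maurer--Cartan forms. However, two of your supporting claims are wrong as stated. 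First, you assert that the coefficient $-\i P_{22\ell}$ of $\xi_{zu}$ ``is superquadratic in the $v_J$ and hence vanishes after the lower-order normalizations.'' This is false: superquadratic means no constant or \emph{linear} terms, so quadratic monomials such as $v_{z\oz}^2$ are allowed, and on the cross-section $V_{Z\oZ}=1$, so these do \emph{not} vanish --- indeed they are precisely the source of the surviving coefficient of ${\rm Im}\>\mu_{ZU^{\ell+1}}$ (the $-4\i(\mu_{ZU}-\omu_{\oZ U}) = 8\,{\rm Im}\>\mu_{ZU}$ visible in the displayed $\ell=0$ relation). Your sentence is internally inconsistent, since you then correctly attribute the surviving term to ``$v_{z\oz}$--type leading terms''; the first half of the sentence must simply be deleted.

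Second, for part $2)$ your proposed justification of the nonvanishing coefficient --- positivity of the integer coefficients in the prolongation formula --- would not suffice, and your identification of the potential cancellation (between $\xi_u$-- and $\eta_{uu}$--contributions) is misplaced. In the actual computation the coefficient $24$ of $\mu_{U^{\ell+2}}$ is a \emph{signed} combination: the recurrence gives $6\,\mu_{U^{\ell+2}}+6\i\,\omu_{\oZ{}^2U^{\ell+1}}-6\i\,\mcu_{\oZ U^{\ell+2}}+\cdots$, and only after substituting the earlier normalizations $\omu_{\oZ{}^2U^{\ell+1}}\equiv -4\i\,\mu_{U^{\ell+2}}$ and $\mcu_{\oZ U^{\ell+2}}\equiv -\i\,\mu_{U^{\ell+2}}$ (parts \ref{alpha-z-j-u-l-general}, \ref{mu-z-j+2-u-l-general} of Proposition \ref{prop-gen-patt}) does one get $6+24-6=24$. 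So the cancellation check is between the direct $\xi_{u^{\ell+2}}$ term and the feedback through $\oxi_{\oz^2u^{\ell+1}}$ and $\eta_{\oz u^{\ell+2}}$, whose normalized images are all multiples of $\mu_{U^{\ell+2}}$. Since these coefficients are $\ell$-independent, the ``short check'' you flag does close the argument, and with these two local corrections your plan reproduces the paper's proof.
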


In light of the normalizations performed thus far, the remaining unnormalized Maurer--Cartan forms are
$$\mu_U, \quad \overline\mu_U, \quad \mu_Z, \quad \overline\mu_{\oZ}, \quad {\rm Re} \> \mu_{ZU^{\ell+1}},\quad \ell\geq 0.$$

\subsection{Order six}

As demonstrated by Chern and Moser, \cite{Chern-Moser}, this order is of crucial importance. It includes 28 lifted differential invariants but using our previous normalizations, these are all set  to zero except for
$V_{Z^4\oZ{}^2},V_{Z^3\oZ{}^3},V_{Z^2\oZ{}^4}$.  After simplification, we have the recurrence relations
\begin{equation}
\label{eq: rec-order-6}
\aligned
dV_{Z^3\oZ{}^3}&=\varpi_{Z^3\oZ{}^3}+12\,(\mu_{ZU^2}+\overline\mu_{\oZ U^2})-2\,V_{Z^3\oZ{}^3}(\mu_Z+\overline\mu_{\oZ}),
\\
dV_{Z^4\oZ{}^2}&=\varpi_{Z^4\oZ{}^2}-V_{Z^4\oZ{}^2}\,(3\,\mu_Z+\omu_{\oZ}).
\endaligned
\end{equation}
From the first equation it is possible to set $V_{Z^3\oZ{}^3}=0$ and solve for the real part of the partially normalized Maurer--Cartan form $\mu_{ZU^2}$. More generally, we have the following result.

\begin{Lemma}\label{lem-ord-6}
For $\ell\geq 0$, the recurrence relation for the phantom invariant $V_{Z^3\oZ{}^3 U^\ell}=0$ can be solved for the  Maurer--Cartan form ${\rm Re} \>\mu_{ZU^{\ell+2}}$.
\end{Lemma}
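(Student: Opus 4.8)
The plan is to mimic the inductive strategy already used in Lemmas \ref{lem-ord-2} and \ref{lem-ords-4-5}, tracking only the single Maurer--Cartan form that must be solved for, namely the real part of $\mu_{ZU^{\ell+2}}$, and verifying that it appears with a nonzero coefficient in the phantom recurrence relation for $V_{Z^3\oZ{}^3U^\ell}$. The base case $\ell=0$ is precisely the first relation in \eqref{eq: rec-order-6}: there $\mu_{ZU^2}+\overline\mu_{\oZ U^2}$ appears with coefficient $12$, and since $\overline\mu_{\oZ U^2}=\overline{\mu_{ZU^2}}$ by the conjugation relations, the group-component term is $24\,{\rm Re}\>\mu_{ZU^2}$, so setting $V_{Z^3\oZ{}^3}=0$ solves for ${\rm Re}\>\mu_{ZU^2}$.

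For the inductive step I would compute $\phi^{z^3\oz^3u^\ell}$ using the general formula \eqref{phijkl} with $j=k=3$, or equivalently by repeatedly applying $D_u$ as in \eqref{phi-zozul}, and then invariantize via the substitutions \eqref{eq: substitutions}. The key observation is that the term carrying the highest-order form $\mu_{ZU^{\ell+2}}$ arises from the $\xi_{zu}$-type contribution: by the determining equations, differentiating $\xi$ an additional $\ell$ times in $u$ produces $\xi_{zu^{\ell+2}}\mapsto\mu_{ZU^{\ell+2}}$ (together with its conjugate $\overline\mu_{\oZ U^{\ell+2}}$ from the $\overline\xi_{\oz u}$ term), while all other contributions involve either lower-order Maurer--Cartan forms or lifted invariants $V_J$ that have already been normalized to $0$ by the preceding lemmas. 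Because $V_{Z^3\oZ{}^3U^j}=0$ for $j<\ell$ are vanishing phantom invariants, and because the real parts ${\rm Re}\>\mu_{ZU^{j+2}}$ for $j<\ell$ have already been normalized by the inductive hypothesis, the recurrence relation for $V_{Z^3\oZ{}^3U^\ell}$ collapses to
\[
0=dV_{Z^3\oZ{}^3U^\ell}=\varpi_{Z^3\oZ{}^3U^\ell}+12\,(\mu_{ZU^{\ell+2}}+\overline\mu_{\oZ U^{\ell+2}})+\cdots,
\]
where the omitted terms involve only already-normalized or lower-order quantities. This is solvable for ${\rm Re}\>\mu_{ZU^{\ell+2}}$.

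The main obstacle is bookkeeping rather than conceptual: one must confirm that no \emph{other} unnormalized Maurer--Cartan form of order $\ell+2$ intrudes with a conflicting coefficient, and in particular that the imaginary part ${\rm Im}\>\mu_{ZU^{\ell+2}}$ has already been disposed of. The latter is guaranteed by part $1)$ of Lemma \ref{lem-ords-4-5}, which solves for ${\rm Im}\>\mu_{ZU^{\ell+1}}$ from $V_{Z^2\oZ{}^2U^\ell}=0$; combined with the present lemma this exhausts both real and imaginary parts of the $\mu_{ZU^m}$ family. I would therefore verify that the coefficient $12$ of the combination $\mu_{ZU^{\ell+2}}+\overline\mu_{\oZ U^{\ell+2}}$ persists at every order --- which follows from the strict positivity of the integer coefficients in \eqref{phijkl}, ensuring the relevant coefficient never degenerates to zero --- and that the superquadratic coefficients $N,P,Q$ contribute only terms built from the vanishing normalized invariants, hence dropping out of the phantom relation.
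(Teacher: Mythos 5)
Your overall route is the paper's: the detailed argument for this lemma appears in the proof of parts \ref{alpha-u-l+1-general} and \ref{Re-mu-z-u-l+2} of Proposition \ref{prop-gen-patt}, where $\phi^{z^3\oz^3u^\ell}$ is restricted to the cross-section \eqref{cross-section} and the phantom relation is solved directly, and your collapsed relation --- a net group component $24\,{\rm Re}\>\mu_{ZU^{\ell+2}}$ --- is the correct answer. But there is a genuine gap at precisely the point you flag as the main obstacle, namely the verification that the coefficient never degenerates. On the cross-section one computes
\[
\phi^{z^3\oz^3}\big|_{\mK} = 36\,{\rm Re}\>\xi_{zuu} - 6\,\eta_{uuu},
\qquad\hbox{hence}\qquad
D_u^\ell(\phi^{z^3\oz^3})\big|_{\mK} = 36\,{\rm Re}\>\xi_{zu^{\ell+2}} - 6\,\eta_{u^{\ell+3}},
\]
and the form $\mcu_{U^{\ell+3}}$ arising from $\eta_{u^{\ell+3}}$, while indeed ``already normalized'' by Lemma \ref{lem-ord-2}, was normalized as $\mcu_{U^{\ell+3}}\equiv \mu_{ZU^{\ell+2}}+\overline\mu_{\oZ U^{\ell+2}} = 2\,{\rm Re}\>\mu_{ZU^{\ell+2}}$, using the reality of $\mu_{ZU^{\ell+2}}$ from part 1) of Lemma \ref{lem-ords-4-5}. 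Substituting it thus feeds $-12\,{\rm Re}\>\mu_{ZU^{\ell+2}}$ back into the very relation you are solving, so the decisive coefficient is the \emph{difference} $36-12=24$; had these two contributions coincided, the lemma would fail. Your proposed justification --- strict positivity of the integer coefficients in \eqref{phijkl} --- cannot deliver this: that positivity statement concerns the monomials of the superquadratic polynomials $N_{jk\ell},P_{jk\ell},Q_{jk\ell}$, which in any case drop out on the cross-section, whereas here the $\eta$-term enters $\phi^{z^3\oz^3}\big|_{\mK}$ with a \emph{negative} coefficient, and a cancellation against the $\xi_{zu^{\ell+2}}$ term is exactly what positivity of individual coefficients cannot exclude. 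The repair is the explicit computation above.

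Two smaller inaccuracies in your bookkeeping are worth noting. First, the omitted terms are not all built from vanishing normalized invariants: the Leibniz expansion of $D_u^\ell\bigl(v_{z^4\oz^3}\,\xi + v_{z^3\oz^4}\,\overline\xi\bigr)$ leaves behind $\sum_{j=0}^\ell\binom{\ell}{j}\bigl[V_{Z^4\oZ{}^3U^{\ell-j}}\,\mu_{U^j}+V_{Z^3\oZ{}^4U^{\ell-j}}\,\overline\mu_{U^j}\bigr]$, whose coefficients are non-phantom lifted invariants of order $\geq 7$ and generically nonzero; this is harmless for solvability, since those Maurer--Cartan forms are distinct from ${\rm Re}\>\mu_{ZU^{\ell+2}}$ (normalized for $j\geq 2$ by part 2) of Lemma \ref{lem-ords-4-5}, with $\mu_U,\overline\mu_U$ residual), and it is exactly these terms that make the solved expression in part \ref{Re-mu-z-u-l+2} of Proposition \ref{prop-gen-patt} non-horizontal. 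Second, no induction on $\ell$ is actually needed --- each phantom relation is solved independently once the earlier lemmas are in place --- and your claim that the lemma ``exhausts both real and imaginary parts of the $\mu_{ZU^m}$ family'' overreaches: ${\rm Re}\>\mu_{ZU}$ is never normalized at this stage and survives among the residual forms \eqref{MC-5}.
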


Combining part 1) of Lemma \ref{lem-ords-4-5} and Lemma \ref{lem-ord-6}, we have now normalized both $\mu_{ZU^{\ell+2}}$ and $\omu_{\oZ U^{\ell+2}}$ for all $\ell \geq 0$.   Thus, we have successfully normalized all but five of the original infinite collection of Maurer--Cartan forms, namely
\begin{equation}\label{MC-5}
\mu_U, \quad \overline\mu_U, \quad \mu_Z, \quad \overline\mu_{\oZ}, \quad {\rm Re} \> \mu_{ZU}.
\end{equation}
From this result, we can already deduce that the isotropy group of a nondegenerate hypersurface is at most five-dimensional (cf. \cite{Valiquette-SIGMA}).

Combining all the normalizations made thus far, we are able to holomorphically transform a local neighborhood of the nondegenerate hypersurface $M$ to the partial normal form
\begin{equation}
\label{NF-pre}
v=z\oz+\frac{1}{4!\, 2!} \big(V_{Z^4\oZ{}^2} \, z^4 \oz^2+V_{Z^2\oZ{}^4}\, z^2 \oz^4\big)+\sum_{\scriptstyle j+k+\ell\geq 7 \atop {\scriptstyle j\geq 2, \ k \geq 4, \ \ell \geq 0,\ostrut84\atop \scriptstyle {\rm or } \ j\geq 4, \ k \geq 2, \ \ell \geq 0.}} \frac{1}{j!\, k! \,\ell!} V_{Z^j \oZ{}^k U^\ell} z^j \oz^k u^\ell,
\end{equation}
which coincides with the normal form \eqref{NF-CM-general} found by Chern and Moser.   See their paper \cite[Theorem 3.5]{Chern-Moser} for a proof of the convergence of the partial normal form expansion, based on the concept of chains on hypersurfaces.
The second equation in \eqref{eq: rec-order-6} implies that the partially normalized function
\begin{equation*}
V_{Z^4\oZ{}^2} = 48\, c_{42}(0) = 6\,\CMR
\end{equation*}
is a relative invariant.  The multiple $\CMR = V_{Z^4\oZ{}^2}/6$ is known, \cite[Lemma 8.7]{Jacobowitz}, \cite[Theorem 5.2]{Sabzevari-Merker-14}, as the {\it Cartan curvature} of the nondegenerate hypersurface $M$ at the point $\bp$.  

\begin{Definition}
\label{umbilic}
If the Cartan curvature vanishes, so $V_{Z^4\oZ{}^2}|_{\bp} =0$, then $\bp \in M$ is called an \emph{umbilic point}.
\end{Definition}

In other words, a point $\bp$ is umbilic if and only if the corresponding normal form contains no sixth order terms.  As in Euclidean surface theory, one must modify the  subsequent analysis at an umbilic point, and this depends upon whether or not the hypersurface is umbilic in a neighborhood of the point.

Before continuing, let us analyze the normalizations performed in Lemmas \ref{lem-order-1}--\ref{lem-ord-6} in more details, arriving at the following result.

\begin{Proposition}\label{prop-gen-patt}
The following (partially) normalized Maurer--Cartan forms
$$\mu \equiv \omu \equiv \mcu \equiv \mcv \equiv \mcu _Z \equiv \mcu _{\oZ} \equiv \mcu _V \equiv 0$$
are linear combinations of the horizontal forms $\omega^Z$, $\omega^{\oZ}$, $\omega^U$.   Furthermore, for $k,\ell \geq 0$, the higher order (partially) normalized Maurer--Cartan forms are given, modulo the horizontal forms, by
\begin{enumerate}[label=(\alph*)]
\setlength{\itemsep}{0.25cm}

\item \label{alpha-U-k-V}
$\mcu_{U^\ell V}\equiv 0;$

\item\label{Im-z-u-l}
${\rm Im} \> \mu_{ZU^{\ell+1}}\equiv 0;$

\item \label{alpha-u-l+1-general}
$\displaystyle \mcu_{U} \equiv \mu_Z + \omu_{\oZ} , \qquad
\mcu_{U^2} \equiv 2\, \mu_{ZU}, \qquad
\mcu_{U^3} \equiv 2 \,\mu_{ZU^2} \equiv 0$, \hfill\break
$\displaystyle \mcu_{U^{\ell+4}} \equiv 2\, \mu_{ZU^{\ell+3}} \equiv \frac{1}{12} \sum_{j=1}^{\ell+1} \binom{\ell+1}{j} \Big[ V_{Z^4\oZ{}^3 U^{\ell+1-j}}\,\mu_{U^j}+ V_{Z^3\oZ{}^4 U^{\ell+1-j}}\,\overline\mu_{U^j}\Big];$

\item \label{Re-mu-z-u-l+2}
$\displaystyle \mu_{ZU^{2}} = \omu_{\oZ U^{2}}  \equiv 0 , $ \hfill\break $\displaystyle \mu_{ZU^{\ell+3}} = \omu_{\oZ U^{\ell+3}} \equiv \frac{1}{24}  \sum_{j=1}^{\ell+1} \binom{\ell+1}{j} \Big[ V_{Z^4\oZ{}^3 U^{\ell+1-j}}\,\mu_{U^j}+ V_{Z^3\oZ{}^4 U^{\ell+1-j}}\,\overline\mu_{U^j}\Big];$

\item \label{alpha-z-j-u-l-general}
$\displaystyle \mcu_{ZU^\ell} \equiv \i \overline\mu_{U^\ell} , \qquad
\mcu_{\oZ U^\ell} \equiv -\i \mu_{U^\ell} , \qquad
\mcu_{Z^{k+2} U^\ell} \equiv \mcu_{\oZ{}^{k+2} U^\ell} \equiv 0;$

\item \label{mu-z-j+2-u-l-general}
$\mu_{Z^{2} U^\ell} \equiv 4\i \overline\mu_{U^{\ell+1}}$, \qquad  $\mu_{Z^{3} U^\ell} \equiv 0$, \qquad 
$\mu_{Z^{k+4}}\equiv 0$, \\ 
$\displaystyle \mu_{Z^{k+4} U^{\ell+1}}\equiv -\sum_{j=1}^{\ell+1} \binom{\ell+1}{j} \,V_{Z^{k+4}\oZ{}^2 U^{\ell+1-j}}\,\overline\mu_{U^j}$, \hfill\break
$\overline\mu_{\oZ{}^{2} U^\ell} \equiv -4\i \mu_{U^{\ell+1}}$, \qquad 
$\overline\mu_{\oZ{}^{3} U^\ell} \equiv 0$, \qquad   
$\overline\mu_{\oZ{}^{k+4}} \equiv 0$, \\  
$\displaystyle \overline\mu_{\oZ{}^{k+4} U^{\ell+1}} \equiv  -\sum_{j=1}^{\ell+1} \binom{\ell+1}{j}\,V_{Z^2\oZ{}^{k+4} U^{\ell+1-j}}\mu_{U^j};$

\item \label{mu-u-l+2}
$\mu_{U^2} \equiv \overline\mu_{U^2} \equiv 0$, while $\mu_{U^{\ell+3}},\overline\mu_{U^{\ell+3}},$ are expressed in terms of $\mu_U$ and $\overline\mu_{U}$ via the recursive relations
\[
\mu_{U^{\ell+3}} \equiv \frac{1}{24} \sum_{j=1}^{\ell+1} \binom{\ell+1}{j}
 V_{Z^2\oZ{}^4 U^{\ell+1-j}}\overline\mu_{U^j},\qquad
\overline\mu_{U^{\ell+3}} \equiv \frac{1}{24} \sum_{j=1}^{\ell+1} \binom{\ell+1}{j} V_{Z^4\oZ{}^2 U^{\ell+1-j}}\mu_{U^j}.
\]
\end{enumerate}
\end{Proposition}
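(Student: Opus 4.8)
The plan is to read Proposition \ref{prop-gen-patt} as a systematic bookkeeping of the explicit solutions of the phantom recurrence relations already established in Lemmas \ref{lem-order-1}--\ref{lem-ord-6}, now recorded modulo the horizontal forms. The unifying engine is the recurrence relation \eqref{rec-rel}: since each $\varpi_J \equiv 0$, it reads $dV_J \equiv \phi^J(\bZ\n,\bmu\n)$, and whenever $V_J$ has been normalized to a constant the left-hand side vanishes, producing a purely algebraic identity $0 \equiv \phi^J(\bZ\n,\bmu\n)$ among the partially normalized Maurer--Cartan forms. The whole proposition is then proved by induction on the total order $n = j+k+\ell$: one invariantizes the master coefficient formula \eqref{phijkl} via \eqref{eq: substitutions}, discards the superquadratic terms $N_{jk\ell}, P_{jk\ell}, Q_{jk\ell}$ and the Leibniz remainders \eqref{monomials} that vanish once the normalized lifted invariants of \eqref{NF-pre} are inserted, and solves for the single highest-order form. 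The opening line is immediate, since \eqref{eq: order 01 partially normalized mc forms} already gives $\mu = -\omega^Z$, $\omu = -\omega^{\oZ}$, $\mcu = -\omega^U$, $\mcv = 0$, $\mcu_Z = -\i\varpi_Z$, $\mcu_{\oZ} = \i\varpi_{\oZ}$, $\mcu_V = \varpi_U$, all of which are $\equiv 0$.

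For the vanishing assertions (a) and (b), and the base cases of (c)--(g), I would extract from \eqref{phijkl} the leading vector-field jet: for $\phi^{u^{\ell+1}}$ it is $-\eta_{u^\ell v}$, invariantizing to $-\mcu_{U^\ell V}$ (part (a)); for $\phi^{z^2\oz^2 u^\ell}$ it produces the combination controlling ${\rm Im}\,\mu_{ZU^{\ell+1}}$ (part (b)). In each case the inductive hypothesis guarantees that every correction term of order $< n$ is already $\equiv 0$, or else carries a lifted invariant $V_J$ that has been normalized to zero in \eqref{NF-pre}, so the phantom relation collapses to $\mcu_{U^\ell V} \equiv 0$, respectively ${\rm Im}\,\mu_{ZU^{\ell+1}}\equiv 0$. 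The base cases $\mcu_U \equiv \mu_Z + \omu_{\oZ}$ and $\mcu_{U^2}\equiv 2\,\mu_{ZU}$ are read off from \eqref{alpha_U normalization} and the $\ell = 1$ computation in the proof of Lemma \ref{lem-ord-2}, the factor $2$ emerging once part (b) is invoked: $\omu_{\oZ U} = \overline{\mu_{ZU}} \equiv \mu_{ZU}$ because $\mu_{ZU}$ is, modulo horizontal forms, real.

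The recursive formulas in (c)--(g) follow the same template but now retain genuinely nonzero contributions. After the normalizations, the only surviving jet coordinates in \eqref{NF-pre} are the normal-form invariants $V_{Z^4\oZ{}^2 U^m}$, $V_{Z^2\oZ{}^4 U^m}$ and the higher ones $V_{Z^{k+4}\oZ{}^2 U^m}$, $V_{Z^4\oZ{}^3 U^m}$ together with their conjugates, so every product coming from $N_{jk\ell}, P_{jk\ell}, Q_{jk\ell}$ and from \eqref{monomials} reduces to a single such invariant multiplied by a low-order vector-field jet $\eta_{u^j}$ or $\xi_{u^j}$, which invariantize to $\omu_{U^j}$ or $\mu_{U^j}$. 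Expanding the operator $D_u^{\ell+1}$ appearing in $\phi^{z^j\oz^k u^{\ell+1}}$ by the Leibniz rule supplies exactly the binomial weights $\binom{\ell+1}{j}$, while the rational prefactors $\tfrac1{12}$ and $\tfrac1{24}$ are inherited from the fixed integer coefficients multiplying the high invariants in $\phi^{z\oz u^\ell}$, $\phi^{z^2\oz^2 u^\ell}$ and $\phi^{z^3\oz^3 u^\ell}$. The paired identities $\mcu_{U^{\ell+4}}\equiv 2\,\mu_{ZU^{\ell+3}}$ in (c)--(d) are not independent: the $\phi^{z\oz u^{\ell+3}}$ relation yields $\mcu_{U^{\ell+4}}$ in terms of $\mu_{ZU^{\ell+3}}+\omu_{\oZ U^{\ell+3}}$, which part (b) collapses to $2\,\mu_{ZU^{\ell+3}}$, whereas the $\phi^{z^3\oz^3 u^{\ell+1}}$ relation fixes ${\rm Re}\,\mu_{ZU^{\ell+3}}$ with half the prefactor, so the two are automatically consistent and share the same sum. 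Parts (f) and (g) are handled identically from the relations of Lemmas \ref{lem-ord-3} and \ref{lem-ords-4-5}, with every conjugate statement obtained for free from $\overline{\mu_J} = \omu_{\overline J}$.

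The principal obstacle is purely combinatorial: one must verify that, after all prior normalizations, the only monomials of \eqref{monomials} and of $N_{jk\ell}, P_{jk\ell}, Q_{jk\ell}$ that survive are exactly those pairing one surviving normal-form invariant with one factor $v_{u^j}$, and that their accumulated coefficients assemble into the stated binomial sums with the precise prefactors. Here the remark that each monomial in \eqref{phijkl} carries a strictly positive integer coefficient, together with careful use of the Maurer--Cartan relations \eqref{eq: MC relations} --- which annihilate $\mu_{\oZ a}$, $\omu_{Za}$, and the like, thereby pruning most candidate forms --- does the essential work. The remaining content, namely that each phantom relation is genuinely solvable for the single claimed highest-order form, is already guaranteed by Lemmas \ref{lem-order-1}--\ref{lem-ord-6}; the proposition merely upgrades ``solvable'' to the explicit solution, so no new solvability argument is required.
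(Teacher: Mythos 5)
Your overall blueprint is the paper's own strategy: work on the cross-section \eqref{cross-section}, use the reduction \eqref{phiJ} together with the restriction property \eqref{Ds} to evaluate each prolonged coefficient $\phi^J|_{\mK}$, read off the leading vector-field jet, obtain the binomial weights $\binom{\ell}{j}$ from the Leibniz expansion of $D_u^\ell$, and solve the phantom relations in a definite order --- part \ref{alpha-U-k-V} feeding part \ref{Im-z-u-l}, the reality of $\mu_{ZU^{\ell+1}}$ producing the factor $2$ that links parts \ref{alpha-u-l+1-general} and \ref{Re-mu-z-u-l+2}, exactly as in the paper's treatment of $dV_{Z^3\oZ{}^3U^\ell}$, where $36\,{\rm Re}\>\mu_{ZU^{\ell+2}}-6\,\mcu_{U^{\ell+3}}$ collapses to $24\,{\rm Re}\>\mu_{ZU^{\ell+2}}$. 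That skeleton, and your consistency remark about the paired identities, is correct.

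However, your pruning rule contains a genuine error: you assert that the only monomials from $N_{jk\ell},P_{jk\ell},Q_{jk\ell}$ and \eqref{monomials} surviving on the cross-section are those pairing one normal-form invariant ($V_{Z^4\oZ{}^2U^m}$, etc.) with a factor invariantizing to $\mu_{U^j}$ or $\omu_{U^j}$. This is false, because monomials containing powers of $v_{z\oz}$ do \emph{not} vanish --- $V_{Z\oZ}$ is normalized to $1$, not $0$ --- and these supply indispensable non-horizontal contributions. Concretely, in part \ref{mu-z-j+2-u-l-general} with $k=0$, the monomial $2\,v_{\oz}v_{z\oz}\,\overline\xi_v$ produces, after $D_zD_u^\ell$ and restriction to $\mK$, the jet $\overline\xi_{u^{\ell+1}}$; together with $2\,\eta_{zu^{\ell+1}}$ (whose invariantization $2\,\mcu_{ZU^{\ell+1}}\equiv 2\i\,\overline\mu_{U^{\ell+1}}$ by part \ref{alpha-z-j-u-l-general}) it is the entire source of $\mu_{Z^2U^\ell}\equiv 4\i\,\overline\mu_{U^{\ell+1}}$; your rule would instead yield $\mu_{Z^2U^\ell}\equiv 0$. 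Likewise in part \ref{mu-u-l+2}, the paper computes $\phi^{z^2\oz^3}|_{\mK}=6\,\xi_{u^2}+6\i\,\overline\xi_{\oz^2u}-6\i\,\eta_{\oz u^2}$: the jets $\overline\xi_{\oz{}^2u^{\ell+1}}$ and $\eta_{\oz u^{\ell+2}}$ invariantize to $\omu_{\oZ{}^2U^{\ell+1}}$ and $\mcu_{\oZ U^{\ell+2}}$, which are \emph{not} annihilated by \eqref{eq: MC relations}; they must be converted through parts \ref{alpha-z-j-u-l-general} and \ref{mu-z-j+2-u-l-general} ($\omu_{\oZ{}^2U^{\ell+1}}\equiv -4\i\,\mu_{U^{\ell+2}}$, $\mcu_{\oZ U^{\ell+2}}\equiv -\i\,\mu_{U^{\ell+2}}$) to assemble the coefficient $6+24-6=24$ --- so the prefactor $\frac1{24}$ is not ``inherited from a fixed integer coefficient'' but results from this combination. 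Two further slips: under \eqref{eq: substitutions} one has $\eta_{u^j}\mapsto\mcu_{U^j}$ and $\overline\xi_{u^j}\mapsto\omu_{U^j}$ (your text sends $\eta_{u^j}$ to $\omu_{U^j}$); and \eqref{phijkl} cannot serve as the master formula for parts \ref{alpha-z-j-u-l-general}--\ref{mu-u-l+2}, since the leading jets $\eta_{z^ku^\ell}$, $\xi_{z^{k+2}u^\ell}$, $\xi_{u^{\ell+2}}$ lie in its undisplayed remainder. The paper avoids all of this by computing $\phi^J|_{\mK}$ directly from \eqref{phiJ} for each family --- in part \ref{mu-z-j+2-u-l-general} by explicitly isolating the three underlined surviving terms --- and that term-by-term verification is exactly what your argument must carry out rather than invoke a blanket vanishing claim.
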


\noindent {\it Remark\/}: We have listed all the complex conjugate relations for the reader's convenience, but will only give the details of the proof of one of each pair below.

\begin{proof}
Let
\begin{equation}\label{cross-section}
\begin{aligned}
\mathcal{K} = \{v_{z\oz} = 1,\; z&=\oz=u=v=v_{z^{k}u^\ell}=v_{\oz^{k}u^\ell}=v_{z\oz u^{\ell+1}} =v_{z^{k+2}\oz u^\ell}\\
&= v_{z \oz^{k+2}u^\ell}=v_{z^2\oz^2u^\ell}=v_{z^3\oz^2u^\ell}=v_{z^2\oz^3u^\ell}=v_{z^3\oz^3u^\ell}=0\;|\; k,\, \ell \in \N_0\}
\end{aligned}
\end{equation}
be the cross-section corresponding to the normalizations performed in Lemmas \ref{lem-order-1}--\ref{lem-ord-6}; see also the normal form \eqref{NF-pre}.  Moreover, from the order zero normalizations \eqref{eq: order 01 partially normalized mc forms}, invariantization of the last three terms in the prolongation formula \eqref{eq: prolongation formula} produces a horizontal form
\[
v_{J,z}\,\xi  + v_{J,\oz} \,\oxi + v_{J,u}\, \eta \quad \overset{\eqref{eq: substitutions}}{\longmapsto }\quad V_{J,Z} \,\mu  + V_{J,\oZ}\,\omu + V_{J,U}\,\mcu \equiv 0,
\]
and hence we introduce the equality
\begin{equation}\label{phiJ}
\phi^J \equiv D_J(\phi - v_z\,\xi  - v_{\oz}\,\oxi  - v_u\,\eta )\qquad  \bmod \ (\xi,\oxi,\eta).
\end{equation}
We note that, if $F(z,\oz,u,v)$ is any function of the base variables, then, on the cross-section \eqref{cross-section}, the following total derivatives reduce to the corresponding partial derivatives
\begin{equation}\label{Ds}
D_u^\ell F\big|_{\mK} = \partial_u^\ell F, \qquad D_z^k D_u^\ell F\big|_{\mK} = \partial_z^k\partial_u^\ell F, \qquad D_{\oz}^k D_u^\ell F\big|_{\mK} = \partial_{\oz}^k\partial_u^\ell F.
\end{equation}

\begin{description}
\item[Proof of part \ref{alpha-U-k-V}] According to \eqref{phiJ},
\[
\phi^{u^{\ell+1}} \equiv D_u^{\ell+1}\big(\phi - v_z\,\xi  - v_{\oz}\,\oxi  - v_u\,\eta\big).
\]
In view of \eqref{Ds} and the determining equations \eqref{eq: infinitesimal determining equations}, when restricted to the cross-section
\[
\phi^{u^{\ell+1}}\big|_{\mK} \equiv \phi_{u^{\ell+1}} = -\eta_{u^\ell v}.
\]
We thus have the recurrence relation
\[
0 = dV_{U^{\ell+1}} \equiv  -\mcu_{U^\ell V}.
\]

\item[Proof of part \ref{Im-z-u-l}]
According to \eqref{phiJ},
\begin{equation*}
\phi^{z^2\oz{}^2u^\ell} \equiv D_u^\ell \big(\phi^{z^2\oz{}^2}-v_{z^3\oz{}^2}\,\xi-v_{z^2\oz{}^3}\,\overline\xi- v_{z^2\oz{}^2u}\,\eta\big).
\end{equation*}
On the cross-section \eqref{cross-section},
\[
D_u^\ell\big(v_{z^3\oz^2}\,\xi +v_{z^2\oz^3}\,\overline\xi +v_{z^2\oz^2u}\,\eta \big)\big|_{\mK}=0.
\]
Furthermore, the determining equations \eqref{eq: infinitesimal determining equations} imply
\[
\phi^{z^2\oz^2}\big|_{\mK}
=8\,{\rm Im} \>\xi_{zu}+2\,\eta_{uv},
\]
and hence
\[
\phi^{z^2\oz^2 u^\ell}\big|_{\mK} \equiv D_u^\ell(\phi^{z^2\oz^2})\big|_{\mK} = 8\,{\rm Im} \> \xi_{zu^{\ell+1}}+2\,\eta_{u^{\ell+1} v}.
\]
From this, we deduce the phantom recurrence relation
\[
0=dV_{Z^2\oZ{}^2 U^\ell}\equiv 8\,{\rm Im} \> \mu_{ZU^{\ell+1}}+2\,\mcu_{U^{\ell+1} V}.
\]
By part \ref{alpha-U-k-V}, $\mcu_{U^{\ell+1} V}\equiv 0$, which implies that ${\rm Im} \> \mu_{ZU^{\ell+1}} \equiv 0$.

\bigskip

\item[Proof of parts   \ref{alpha-u-l+1-general} and \ref{Re-mu-z-u-l+2}]
The formula for $\mcu _U$ follows from \eqref{alpha_U normalization}. As for the formulae for $\mcu_{U^{\ell}}$, when $\ell \geq 2$, we first note that
\[
\aligned
\phi^{z\oz u^\ell} \equiv D_u^\ell \big(\phi^{z\oz}-v_{z^2\oz}\,\xi -v_{z\oz{}^2}\,\overline\xi -v_{z\oz u}\,\eta \big).
\endaligned
\]
Since
\[
D_u^\ell \big(v_{z^2\oz}\,\xi +v_{z\oz{}^2}\,\overline\xi +v_{z\oz u}\,\eta \big)\big|_{\mK} = 0,
\]
by \eqref{eq: lemma2 prolongation term},
\[
\phi^{z\oz u^\ell}\big|_{\mK} \equiv D_u^\ell(\phi^{z\oz})\big|_{\mK} =  \eta_{u^{\ell+1}}-\overline\xi_{\oz u^\ell}-\xi_{z u^\ell}.
\]
We deduce the recurrence relation
\[
0=dV_{Z\oZ U^\ell} \equiv  \mcu_{U^{\ell+1}}-\mu_{ZU^\ell}-\overline\mu_{\oZ U^\ell},
\]
which implies
\[
\mcu_{U^{\ell+1}} \equiv  \mu_{ZU^\ell}+\overline\mu_{\oZ U^\ell} .
\]
Moreover, when $\ell\geq 1$, part \ref{Im-z-u-l} implies that $\mu_{ZU^\ell}$ is a real form, which establishes the first formulae for $\mcu_{U^{\ell+1}}$.
Next, from the prolongation formula \eqref{eq: prolongation formula} and \eqref{phiJ},
\begin{equation}\label{phizzz}
\phi^{z^3\oz^3 u^\ell} \equiv D_u^\ell \big(\phi^{z^3\oz^3}-v_{z^4\oz^3}\,\xi -v_{z^3\oz^4}\,\overline\xi -v_{z^3\oz^3u}\,\eta \big).
\end{equation}
On the cross-section
$$\qquad \phi^{z^3\oz^3}\big|_{\mK} = 36\,{\rm Re} \>(\xi_{zuu}) - 6 \,\eta _{uuu}, \quad \hbox{and hence} \quad
D_u^\ell(\phi^{z^3\oz^3})\big|_{\mK}
= 36\, {\rm Re} \>(\xi_{zu^{\ell+2}}) - 6 \,\eta _{u^{\ell+3}}.
$$
Next, applying  $D_u^\ell$ to $v_{z^4\oz^3}\xi+v_{z^3\oz^4}\overline\xi$, the non-vanishing terms on the cross-section are
$$D_u^\ell( v_{z^4\oz^3}\,\xi+v_{z^3\oz^4}\,\overline\xi )\big|_{\mK} = \sum_{j=0}^\ell \binom{\ell}{j}\big[v_{z^4\oz^3 u^{\ell-j}}\,\xi_{u^j} +v_{z^3\oz^4 u^{\ell-j}}\,\overline\xi_{u^j} \big].
$$
Finally, we note that
\[
D_u^\ell(v_{z^3\oz^3u}\,\eta)\big|_{\mK} = 0.
\]
Substituting these equations into \eqref{phizzz}, we conclude that, modulo the horizontal coframe,
\begin{equation}\label{Re-mu-z-u-l+2 recurrence relation}
\begin{aligned}
0=dV_{Z^3\oZ{}^3 U^\ell} &\equiv 36\,{\rm Re} \> \mu_{ZU^{\ell+2}} - 6 \,\mcu _{U^{\ell+3}}-\sum_{j=0}^\ell\binom{\ell}{j}
\Big[V_{Z^4\oZ{}^3 U^{\ell-j}}\,\mu_{U^j} +V_{Z^3\oZ{}^4 U^{\ell-j}}\,\overline\mu_{U^j} \Big]
\\
&\equiv 24\,{\rm Re} \> \mu_{ZU^{\ell+2}} -\sum_{j=0}^\ell\binom{\ell}{j}
\Big[V_{Z^4\oZ{}^3 U^{\ell-j}}\,\mu_{U^j} +V_{Z^3\oZ{}^4 U^{\ell-j}}\,\overline\mu_{U^j} \Big],
\end{aligned}
\end{equation}
where we used the initial formulae in part   \ref{alpha-u-l+1-general} in the second line.
Part \ref{Im-z-u-l} implies that $\mu_{ZU^{\ell+2}} = {\rm Re} \> \mu_{ZU^{\ell+2}}$ is a real form, and hence \eqref{Re-mu-z-u-l+2 recurrence relation} yields part \ref{Re-mu-z-u-l+2}, which then completes the proof of part   \ref{alpha-u-l+1-general}.

\bigskip

\item[Proof of part  \ref{alpha-z-j-u-l-general}]
As in Lemma \ref{lem-order-1}, the formula for $\mcu_{ZU^\ell}$ follows from the recurrence relation $0 = d V_{ZU^\ell}$. Also, by \eqref{phiJ}, if $k \geq 2$,
\[
\phi^{z^k u^\ell}\big|_{\mK} \equiv D_z^k D_u^\ell \big(\phi-v_z\,\xi -v_{\oz}\,\overline\xi -v_u\,\eta \big)\big|_{\mK}\equiv \phi_{z^k u^\ell} = -\i \eta _{z^k u^\ell},
\]
which yields
$0=dV_{Z^k U^\ell} \equiv  -\i \mcu_{Z^k U^\ell}$ when $k \geq 2$,
and the result follows.

\bigskip

\item[Proof of part \ref{mu-z-j+2-u-l-general}]
Again, by \eqref{phiJ},
\begin{equation}\label{prolonged coefficient mu-z-j+2-u-l-general}
\phi^{z^{k+2} \oz u^\ell} \equiv D_z^{k+2} D_{\oz} D_u^\ell \big(\phi-v_z\,\xi -v_{\oz}\,\overline\xi -v_u\,\eta \big).
\end{equation}
First note that
\[
D_z^{k+2} D_{\oz} D_u^\ell(v_u\,\eta )\big|_{\mK} = 0.
\]
Next,
\[
D_z^{k+2} D_{\oz} D_u^\ell (\phi)\big|_{\mK} = (k+2)\, v_{z\oz}\,\phi_{z^{k+1} u^\ell v} \big|_{\mK} = (k+2)\, \eta_{z^{k+1} u^{\ell+1}}.
\]
Finally,
\[
\aligned
D_z D_{\oz}\big(v_{z}\,\xi +v_{\oz}\,\overline\xi\big)=&\big(v_z v_{\oz}\,\xi_{zv} +v_z^2 v_{\oz}\,\xi_{vv}+\underline{v_{z\oz}\,\xi_{z} } +v_{\oz} v_{zz}\, \xi_{v}+2\,v_z v_{z\oz}\,\xi_{v}  +v_{zz\oz}\,\xi
\\
&+v_z v_{\oz}\,\overline\xi_{\oz v}+v_z v_{\oz}^2\,\overline\xi_{vv} +v_{z\oz}\,\overline\xi_{\oz}  +\underline{2\,v_{\oz}v_{z\oz}\, \overline\xi_{v} }+v_z v_{\oz\oz}\,\overline\xi_{v} +\underline{v_{z\oz\oz}\,\overline\xi }\big).
\endaligned
\]
After applying $D_z^{k+1} D_u^\ell$ to the last expression, one observes that on the cross-section the only non-zero terms emerge from the three underlined terms.  First, a careful inspection yields
\[
D_z^{k+1} D_u^\ell \big(v_{z\oz}\,\xi_z +v_{z\oz^2}\,\overline\xi\big)\big|_{\mK}
=
\xi_{z^{k+2} u^\ell} +\sum_{j=0}^\ell \binom{\ell}{j}v_{z^{k+2}\oz{}^2 u^{\ell-j}}\,\overline\xi_{u^j}.
\]
In particular, when $k=0$ or $1$, the sum vanishes on $\mathcal{K}$.  Moreover,
\[
D_z^{k+1} D_u^\ell \big(2\,v_{z\oz} v_{\oz}\,\overline\xi_v \big)|_{\mK} =
\begin{cases}
-2\i \,\overline\xi_{u^{\ell+1}}, & k=0,\\
\hspace{0.75cm}0,&k\geq 1.
\end{cases}
\]
In summary, the part of \eqref{prolonged coefficient mu-z-j+2-u-l-general} that does not vanish on the cross-section is
\[
\begin{cases}
\displaystyle 2\,\eta_{z u^{\ell+1}} +2\i\,\overline\xi_{u^{\ell+1}} -\xi_{z^{2} u^\ell},  & k=0,\\
\displaystyle (k+2)\,\eta_{z^{k+1} u^{\ell+1}} -\xi_{z^{k+2} u^\ell} -\sum_{j=0}^\ell \binom{\ell}{j}v_{z^{k+2}\oz{}^2 u^{\ell-j}}\,\overline\xi_{u^j} ,&  k\geq 1.
\end{cases}
\]
Finally, using the result in part \ref{alpha-z-j-u-l-general} we obtain the recurrence relations
\[
0=dV_{Z^{2+k}\oZ U^\ell} \equiv \begin{cases}
\displaystyle 4\i \overline\mu_{U^{\ell+1}}-\mu_{Z^2 U^\ell}, &   k=0,\\
\displaystyle -\mu_{Z^{k+2} U^\ell}-\sum_{j=0}^\ell \binom{\ell}{j}
V_{Z^{k+2}\oZ{}^2 U^{\ell-j}}\,\overline\mu_{U^j},  &    k\geq 1.
\end{cases}
\]
Solving for $\mu_{Z^{k+2}U^\ell}$ yields the result.

\bigskip

\item[Proof of part \ref{mu-u-l+2}]
In this case, we use
\[
\phi^{z^2\oz^3u^\ell} \equiv D_u^\ell \big(\phi^{z^2\oz^3}-v_{z^3\oz^3}\,\xi -v_{z^2\oz^4}\,\overline\xi -v_{z^2\oz^3u}\,\eta \big).
\]
Since
\[
\phi^{z^2\oz^3}\big|_{\mK} = 6\,\xi_{u^2}+6\i\,\overline\xi_{\oz^2u}-6\i\,\eta_{\oz u^2},
\]
we find that
\begin{equation*}
D_u^\ell(\phi^{z^2\oz^3})\big|_{\mK} = 6\,\xi_{u^{\ell+2}}+6\i\,\overline\xi_{\oz^2u^{\ell+1}}-6\i\,\eta_{\oz u^{\ell+2}}.
\end{equation*}
Also,
\[
-D_u^\ell(v_{z^3\oz^3}\,\xi +v_{z^2\oz^3u}\,\eta )\big|_{\mK} = 0.
\]
Finally,
\begin{equation*}
D_u^\ell\big(v_{z^2\oz^4}\,\overline\xi )\big|_{\mK} = \sum_{j=0}^\ell \binom{\ell}{j}v_{z^2\oz^4 u^{\ell-j}}\,\overline\xi_{u^j} .
\end{equation*}
This yields the recurrence relations
\[
\aligned
0=dV_{Z^2\oZ{}^3 U^\ell}&\equiv 6\,\mu_{U^{\ell+2}}+6\i\overline\mu_{\oZ{}^2U^{\ell+1}}-6\i \mcu_{\oZ U^{\ell+2}} -\sum_{j=0}^\ell \binom{\ell}{j}
V_{Z^2\oZ{}^4 U^{\ell-j}}\,\overline\mu_{U^j}
\\
&\equiv 24\,\mu_{U^{\ell+2}} -\sum_{j=0}^\ell \binom{\ell}{j} V_{Z^2\oZ{}^4 U^{\ell-j}}\,\overline\mu_{U^j} ,
\endaligned
\]
where we used parts \ref{alpha-z-j-u-l-general}, \ref{mu-z-j+2-u-l-general} in the second line. Solving for $\mu_{U^{\ell+2}}$ completes the proof.
\end{description}
\end{proof}

\begin{Remark}
\label{rem}
We note that, in Proposition \ref{prop-gen-patt}, all Maurer--Cartan forms  except for $\mcu_U=\mu_Z+\overline\mu_{\oZ}$ and $\mcu_{UU}=2\, {\rm Re} \>\mu_{ZU}$ are, modulo horizontal forms, either zero or expressed as a linear combination of only $\mu_U$ and $\overline\mu_U$ with differential invariant coefficients.
\end{Remark}

\section{Normal forms for Nondegenerate Hypersurfaces in $\C^2$}

In this section we complete our normal form constructions.
According to Definition \ref{umbilic}, the Cartan curvature  $\CMR = V_{Z^4\oZ{}^2}/6$ distinguishes between non-umbilic and umbilic points on the hypersurface.  The latter subdivide into the locally umbilic case, where $\CMR$ vanishes on a neighborhood of $\bp$, and the singularly umbilic points.  As we will see, the singularly umbilic case further splits into three subclasses that must be treated differently: generic, semi-circular,  and  circular, the latter being distinguished by the presence of a one-parameter automorphism group fixing $\bp$.  The precise definitions of these three subclasses can be found in Definition \ref{singularly-umbilic-def} below.

The first two classes, namely non-umbilic and locally umbilic hypersurfaces, are well known through the original work of Chern and Moser, \cite{Chern-Moser}; see also \cite{Jacobowitz, Webster-78}. Here we rederive these classical results using the equivariant moving frame calculus, which will allow the reader to compare the two  approaches.  We then show, in the former case, how the moving frame recurrence formulae provide the tools that enable one to investigate the structure of the differential invariant algebra in detail; see Theorem \ref{J} below.

Recall that, by virtue of the normalizations in Section \ref{sec-NF-general-comp} used to achieve the partial normal form \eqref{NF-pre}, the remaining non-phantom partially normalized differential invariants are $V_{Z^j \oZ{}^k U^\ell}$ for $j  \geq 4, k \geq 2,\ell \geq 0$, and their conjugates $V_{Z^k \oZ{}^j U^\ell}$.  There are five remaining unnormalized Maurer--Cartan forms, listed in \eqref{MC-5}, which correspond to a residual five-dimensional equivalence group that preserves the aforementioned partial normal form.

\subsection{Non-umbilic hypersurfaces}\label{non-umb}

Suppose first that the Cartan curvature does not vanish and so the point $\bp \in M$ is not umbilic.  Accordingly, we continue the computations of Section \ref{sec-NF-general-comp} under the assumption that the relative invariant  $V_{Z^4\oZ{}^2}$ does not vanish at $\bp$, which implies it and its conjugate are nonzero in a neighborhood thereof. In this case, we can normalize\footnote{The choice of $48$ as the normalization constant is so that we can precisely reproduce the normal form \eqref{NF-CM-non-umb} of Chern and Moser.  Any other nonzero constant would work equally well, with corresponding modifications to the coefficients of the resulting formulae.}
\begin{equation}
V_{Z^4\oZ{}^2}=V_{Z^2\oZ{}^4}=48,
\end{equation}
which corresponds to setting $c_{42}(0) =1$ or, equivalently, the
Cartan curvature $\CMR = 8$.
 The second recurrence relation in \eqref{eq: rec-order-6} then implies
\begin{equation*}
\mu_Z = \frac{1}{384}\,(3\,\varpi_{Z^4 \oZ{}^2}-\varpi_{Z^2\oZ{}^4})
\qquad \text{and}\qquad \omu_{\oZ} = \frac{1}{384}\,(3\,\varpi_{Z^2\oZ{}^4}-\varpi_{Z^4\oZ{}^2}).
\end{equation*}
To normalize the remaining Maurer--Cartan forms $\mu_U, \overline\mu_U, {\rm Re} \> \mu_{ZU}$, we proceed to order 7, where there are 36 lifted invariants, 30 of which have already been normalized to zero, leaving
\[
V_{Z^5\oZ{}^2},\quad V_{Z^4\oZ{}^3},\quad V_{Z^4\oZ{}^2U},\quad
V_{Z^2\oZ{}^4U},\quad V_{Z^3\oZ{}^4},\quad V_{Z^2\oZ{}^5}.
\]
We now consider the following recurrence relations
\begin{equation*}
\aligned
dV_{Z^4\oZ{}^2U} &= \varpi_{Z^4\oZ{}^2U} -\frac{1}{192} V_{Z^4\oZ{}^2U}\,(\varpi_{Z^2\oZ{}^4} + 5\, \varpi_{Z^4\oZ{}^2})-(V_{Z^5\oZ{}^2}\,\mu_U+V_{Z^4\oZ{}^3}\,\omu_U)-192\,{\rm Re} \>\mu_{ZU},
\\
dV_{Z^4\oZ{}^3} &= \varpi_{Z^4\oZ{}^3} -\frac{1}{384}V_{Z^4\oZ{}^3}\,(3\,\varpi_{Z^2\oZ{}^4}+7\,\varpi_{Z^4\oZ{}^2})-7\i V_{Z^4\oZ{}^2U}\,\omega^Z -96\i \mu_U.
\endaligned
\end{equation*}
By normalizing
\begin{equation}
{\rm Re} \>(V_{Z^4\oZ{}^2U}) = V_{Z^4\oZ{}^3}=V_{Z^3\oZ{}^4}=0,
\end{equation}
and using \eqref{varpi}, one can solve the resulting phantom recurrence relations for the remaining Maurer--Cartan forms
\begin{equation}
\aligned
\mu_U &= -\frac{\i}{96} \varpi_{Z^4\oZ{}^3} - \frac 7{96} V_{Z^4\oZ{}^2U}\,\omega^Z,
\qquad \overline\mu_U = \frac{\i}{96} \varpi_{Z^3\oZ{}^4} - \frac 7{96} V_{Z^2\oZ{}^4U}\,\omega^{\oZ},\\
{\rm Re} \> \mu_{ZU} 
 &= \frac{1}{384} \big(\varpi_{Z^4\oZ{}^2U} + \varpi_{Z^2\oZ{}^4U}\big) + \frac{\i}{36864}\big(V_{Z^5\oZ{}^2}\, \varpi_{Z^4\oZ{}^3} - V_{Z^2\oZ{}^5}\,\varpi_{Z^3\oZ{}^4}\big) \\
&\hskip1cm + \frac{5\i}{36864} {\rm Im} \>(V_{Z^4\oZ{}^2U})\,\big(V_{Z^5\oZ{}^2}\,\omega^Z - V_{Z^2\oZ{}^5}\omega ^{\oZ}\big) + \frac{1}{9216} \bigl[{\rm Im} \>(V_{Z^4\oZ{}^2U})\bigr]^2\,\omega^U.
\endaligned
\end{equation}
At this point, since we have normalized all the Maurer--Cartan forms, the prolonged action has become locally free, and hence there is at most a discrete isotropy subgroup remaining. According to \cite[Theorem 3]{Beloshapka-80}, this subgroup is either trivial or has exactly two elements. We have thereby re-established the classical Chern--Moser normal form \eqref{NF-CM-non-umb} for the hypersurface at a non-umbilic point. Convergence of the normal form expansion is assured by Chern and Moser's proof of convergence of the partial normal form \eqref{NF-pre} for any values of the remaining pseudo-group parameters, including those specified by the moving frame.

\begin{Theorem}\label{nonumbilic}
Let $M\subset \mathbb{C}^2$ be a nondegenerate hypersurface and  $\bp\in M$ a non-umbilic point. Then, depending on the cardinality of the discrete isotropy subgroup at $\bp$, there exists exactly one or two holomorphic transformations mapping it to the convergent normal form
\begin{equation}\label{nonumbilicnf}
\aligned
v=z\oz &+(z^4\oz^2+z^2\oz^4)+\frac{1}{240}V_{Z^5\oZ{}^2} z^5\oz^2+\frac{1}{240}V_{Z^2\oZ{}^5} z^2\oz^5 \\
&+\frac{1}{48} V_{Z^4\oZ{}^2U}(z^4\oz^2u-z^2\oz^4 u)
 +\sum_{\scriptstyle j+k+\ell \geq 8
\atop{\scriptstyle j\geq 2, \ k \geq 4, \ \ell \geq 0,\ostrut84\atop \scriptstyle {\rm or } \ j\geq 4, \ k \geq 2, \ \ell \geq 0}} \frac{1}{j!\, k! \,\ell!} V_{Z^j \oZ{}^k U^\ell} z^j \oz^k u^\ell,
\endaligned
\end{equation}
where $V_{Z^4\oZ{}^2U}$ is purely imaginary, while $V_{Z^2\oZ{}^5} = \overline{V_{Z^5\oZ{}^2}}$.  Furthermore, modulo certain changes in sign,
the non-constant real and imaginary parts of the coefficients $V_J$ provide a complete set of functionally independent differential invariants in a neighborhood of $\bp$.
\end{Theorem}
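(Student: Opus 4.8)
The plan is to read off both assertions from the completed moving frame construction. The count of normalizing transformations is essentially in hand: the order-by-order normalizations of Section~\ref{sec-NF-general-comp}, together with the order six and seven normalizations carried out above, express every Maurer--Cartan form as a differential-invariant combination of the horizontal coframe $\omega^Z,\omega^{\oZ},\omega^U$. Since no unnormalized Maurer--Cartan form survives, the prolonged pseudo-group action is locally free on the relevant jets, so the moving frame is a genuine equivariant section and the normalizing transformation is determined up to the discrete isotropy subgroup at $\bp$. Invoking \cite[Theorem~3]{Beloshapka-80}, this isotropy is either trivial or of order two, giving exactly one or two such transformations, and convergence of the series \eqref{nonumbilicnf} is inherited from that of the partial normal form \eqref{NF-pre} established in \cite{Chern-Moser}, since the residual parameters fixed by the frame take specific finite values.

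For the second assertion I would appeal to the fundamental theorem of equivariant moving frames, \cite{Olver-Fels-99,Olver-Pohjanpelto-08}. With the coordinate cross-section $\mathcal{K}$ of \eqref{cross-section}, augmented by the order six and seven normalizations, now providing a cross-section to a free and regular prolonged action, the induced invariantization satisfies $\iota(v_J)=V_J$ as in \eqref{eq: iota}, with the cross-section coordinates becoming the \emph{phantom} (constant) invariants and the complementary coordinates becoming honest differential invariants. Completeness then follows from the replacement rule: any differential invariant $I=I(z,\oz,u,v\n)$ satisfies $I=\iota(I)$, and substituting \eqref{eq: iota} rewrites $I$ solely in terms of the normalized $V_J$ and the normalization constants. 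Functional independence of the non-phantom $V_J$ holds because the frame restricts to a local diffeomorphism onto $\mathcal{K}$, so these invariants are the invariantizations of the functionally independent unconstrained jet coordinates; separating real and imaginary parts via the conjugation relation $\overline{V_J}=V_{\overline J}$ then yields the claimed real-valued complete system.

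The genuinely problem-specific point is the ``modulo certain sign changes'' caveat, which I would settle by tracking the residual isotropy. The involution $z\mapsto -z$ preserves \eqref{nonumbilicnf} while acting on the coefficients by $V_{Z^j\oZ{}^k U^\ell}\mapsto (-1)^{j+k}\,V_{Z^j\oZ{}^k U^\ell}$; hence the invariants with $j+k$ odd (for instance $V_{Z^5\oZ{}^2}$, matching the ambiguity $\CMJ\mapsto-\CMJ$ recorded after \eqref{NF-CM-non-umb}) are determined only up to sign, whereas those with $j+k$ even, such as the normalized $V_{Z^4\oZ{}^2}$ and the purely imaginary $V_{Z^4\oZ{}^2U}$, are unambiguous. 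This is precisely the freedom left by a two-element isotropy, so the $V_J$ form a complete functionally independent system modulo this $\mathbb{Z}_2$ action.

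I expect the main obstacle to be not the completeness argument, which is standard, but making local freeness precise in the non-umbilic case: one must confirm that each normalization equation is genuinely solvable for the asserted Maurer--Cartan form, so that the action is truly free rather than merely that the evident normalizations have been exhausted. This solvability rests on the nonvanishing of $V_{Z^4\oZ{}^2}$ at $\bp$ that defines non-umbilicity and on the triangular structure of the recurrence relations recorded in Proposition~\ref{prop-gen-patt}; verifying it rigorously is where the real work lies.
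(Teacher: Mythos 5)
Your proposal is correct and follows essentially the same route as the paper: normalize the five residual Maurer--Cartan forms \eqref{MC-5} at orders six and seven, deduce local freeness and invoke \cite[Theorem 3]{Beloshapka-80} for the one-or-two count, inherit convergence from the Chern--Moser partial normal form \eqref{NF-pre}, and apply the standard cross-section/replacement-rule argument for completeness and functional independence modulo the $z\mapsto -z$ sign ambiguity \eqref{jksign}. The solvability issue you flag at the end as "the real work" is exactly what the recurrence relations displayed just before the theorem settle: the coefficients of ${\rm Re}\>\mu_{ZU}$ and $\mu_U$ there are the nonzero constants $-192$ and $-96\i$, and the coefficient of $\mu_Z$ is the nonvanishing relative invariant $V_{Z^4\oZ{}^2}$, so each phantom recurrence is linearly solvable for the asserted form.
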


Comparing with \eqref{NF-CM-non-umb}, we deduce that the Chern--Moser invariants are given by
\[
\CMJ = \frac{1}{240}V_{Z^5\oZ{}^2},\qquad \CMoJ = \frac{1}{240}V_{Z^2\oZ{}^5},\qquad \CMK = -\frac{\i}{48} V_{Z^4\oZ{}^2U}= \frac{\i}{48} V_{Z^2\oZ{}^4U}.
\]
As noted by Chern and Moser, \cite{Chern-Moser}, the non-umbilic normal form \eqref{nonumbilicnf} is unique modulo the map $z \mapsto -z$, which changes the sign of all the terms that are of odd order in $z,\oz$, so that
\begin{equation} \label{jksign}
V_{Z^j\oZ{}^kU^\ell} \ \longmapsto \ (-1)^{j+k} \,V_{Z^j\oZ{}^kU^\ell}.
\end{equation}
This implies that the prolonged action of $\G$ is free on the \emph{fully regular subset}
\begin{equation} \label{fully regular}
\mathcal V\n = \mathcal U\n \setminus \mathcal S\n \quad \hbox{for} \quad n \geq 7,
\end{equation}
where $\mathcal U\n = \Jn \setminus \{\CMR \ne 0\}$ denotes the set of non-umbilic jets of order $n \geq 6$, while
\begin{equation*}
\mathcal S\n = \bigl\{V_{Z^j\oZ{}^kU^\ell} = 0 \quad \hbox{for} \quad 7 \leq j+k+\ell \leq  n, \ \ j + k = 2m + 1, \ \ m \in \N\bigr\}
\end{equation*}
is the singular subset where the prolonged action in not free.
We will call $\bp \in M$ a \emph{fully regular point of order $n \geq 7$} if the jet of $M$ at $\bp$ lies in $\mathcal V\n$, and $M$ \emph{fully regular of order $n$} if all its points are of this form.

The resulting non-phantom recurrence formulae for the differential invariants all have the form\footnote{As always, we are ignoring contact forms and only writing out the horizontal components of the differential.}
\begin{equation}\label{dVJ}
dV_J = (\DZ V_J)\,\omega^Z + (\DoZ V_J)\,\omega^{\oZ} + (\DU V_J)\,\omega^U.
\end{equation}
Here $\DZ,\DoZ,\DU$ are the invariant differential operators dual to the invariant horizontal forms $\omega^Z,\omega^{\oZ},\omega^U$.  As such, they map differential invariants to higher order differential invariants, and can thus be applied repeatedly to generate an infinite hierarchy of differentiated invariants. On the other hand, using the recurrence relations \eqref{rec-rel} and the normalization formulae, we can rewrite the left-hand side as a linear combination of the invariant horizontal forms whose coefficients are certain polynomial combinations of the normalized differential invariants $V_J$. Thus, the recurrence relations provide expressions for the differentiated invariants in terms of the basic invariants, and hence determines the structure of the associated differential invariant algebra.  It is worth re-emphasizing that this can all be done purely symbolically, and requires no knowledge of the explicit formulae for the differential invariants or the invariant differential operators or even the moving frame!

A set of differential invariants is called \emph{generating} if one can obtain all the other differential invariants as algebraic combinations of the repeated invariant derivatives of those in the generating set.  The general Lie--Tresse Theorem, \cite{Kruglikov-Lychagin-16,Olver-Pohjanpelto-09} states that a Lie pseudo-group that acts (locally) freely on an open subset of a sufficiently high order jet space --- including the holomorphic pseudo-group $\G$ under consideration --- possesses a finite system of generating differential invariants.  In our case, since the pseudo-group acts locally freely at order $7$, according to a general result from the method of moving frames, \cite{Olver-Pohjanpelto-09}, the higher order differential invariants are obtained by invariant differentiation of the non-phantom differential invariants of order $\leq 8$.  These are the order $7$ Chern--Moser invariants $\CMJ,\CMoJ,\CMK$, along with the $11$ order $8$ invariants
\begin{equation}\label{V8}
V_{Z^6\oZ{}^2}, \ V_{Z^5\oZ{}^3}, \ V_{Z^4\oZ{}^4}, \ V_{Z^3\oZ{}^5}, \ V_{Z^2\oZ{}^6}, \ V_{Z^5\oZ{}^2U}, \ V_{Z^4\oZ{}^3U}, \ V_{Z^3\oZ{}^4U}, \ V_{Z^2\oZ{}^5U}, \ V_{Z^4\oZ{}^2U^2}, \ V_{Z^2\oZ{}^4U^2}.
\end{equation}
However the syzygies among the differentiated invariants resulting from the recurrence formulae imply that this set of generators is redundant, and we can get by with a much smaller number.

\begin{Remark}
In principle, using the methods of \cite{Olver-Pohjanpelto-09}, one can determine a finite number of syzygies among the differential invariants that generate all the others.  However, we have not attempted to do this since {\sl a}) the required analysis seems exceedingly complicated and {\sl b}) appears to shed little additional light on their structure.
\end{Remark}

We claim that we can, in all cases, generate all the differential invariants by invariant differentiation of the seventh order Chern--Moser invariants ${\rm Re} \>\CMJ, {\rm Im} \>\CMJ,\CMK$, and the eighth order real differential invariant
\begin{equation}\label{CML}
\CML:=V_{Z^4\oZ{}^4}.
\end{equation}
We do not know if this is a minimal generating set\footnote{An important but difficult question is to determine a minimal set of generating differential invariants for a given (pseudo-) group action.  If the set consists of a single differential invariant, it is obviously minimal.  Otherwise, except in the case of curves (one-dimensional submanifolds), there is no known criterion for determining whether or not a given generating set is minimal.}. Moreover, for suitably generic hypersurfaces, all the differential invariants can be generated from the single  order $7$ differential invariant $\CMK$!  The genericity assumption requires that the hypersurface be ``$\CMK$-nondegenerate'' as per the following definition, which is modeled on the definition of a ``mean curvature degenerate'' surface $S \subset \R^3$, \cite{Olver-2009, Olver-2018}.

\begin{Definition}\label{CMKnondegenerate}
A non-umbilic hypersurface is called \emph{$\CMK$-nondegenerate} if there exist $a,b \in \{Z,\oZ,U\}$ such that
\begin{equation}\label{Knondegenerate}
d\CMK \wedge d(\D_a \CMK) \wedge d(\D_b \CMK) = \det \>\begin{pmatrix}\DZ \CMK  & \DoZ \CMK & \DU \CMK \\\DZ \D_a\CMK  & \DoZ \D_a\CMK & \DU \D_a\CMK \\\DZ \D_b\CMK  & \DoZ \D_b\CMK & \DU \D_b\CMK\end{pmatrix} \omega^Z \wedge \omega^{\oZ}\wedge \omega^U \ne 0.
\end{equation}
\end{Definition}

The condition \eqref{Knondegenerate} is equivalent to the condition that, on the hypersurface, the differential invariants $\CMK,\D_a \CMK,\D_b \CMK$ are functionally independent in a neighborhood of $\bp$, which is thus the generic case.  Thus, a hypersurface is $\CMK$-degenerate when $\{\CMK,\DZ \CMK,\DoZ \CMK,\DU \CMK\}$ contains at most $2$ functionally independent invariants. For example, if $\CMK$ is constant, the hypersurface is $\CMK$-degenerate.

\begin{Theorem}\label{J}
Let $M\subset \mathbb{C}^2$ be a nondegenerate non-umbilic hypersurface. Then  every normalized differential invariant $V_J$ can be written as a polynomial function of the seventh order Chern--Moser invariants ${\rm Re} \>\CMJ, {\rm Im} \>\CMJ,\CMK$, the eighth order real differential invariant $\CML$, and their repeated invariant derivatives, which thus generate the entire differential invariant algebra. Furthermore, if $M$ is $\CMK$-nondegenerate then the differential invariants can be expressed as rational functions of the single real-valued Chern--Moser invariant $\CMK$ and its invariant derivatives, and so, for such hypersurfaces, the differential invariant algebra is generated by a single invariant.
\end{Theorem}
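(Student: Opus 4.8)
The plan is to exploit the leading-order structure of the fully normalized recurrence relations \eqref{dVJ} as the engine driving an induction on the jet order. Combining the universal recurrence formula (Theorem \ref{rr}) with the Maurer--Cartan forms normalized in the non-umbilic branch, one reads off the coefficients of $\omega^Z,\omega^{\oZ},\omega^U$ in $dV_J$ and thereby obtains $\DZ V_J,\DoZ V_J,\DU V_J$ explicitly. The first structural fact I would establish is that, for every multi-index with $\#J\geq 8$,
\[
\D_a V_J = V_{Ja} + R_{J,a},\qquad a\in\{Z,\oZ,U\},
\]
where $R_{J,a}$ is a polynomial in the normalized invariants of order strictly less than $\#J+1$. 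This follows from inspecting the prolongation formula \eqref{phijkl}: its only top-order contribution is the $\varpi_J$-term carrying $V_{Ja}$, while every correction is a product of an invariant drawn from the superquadratic coefficients $N,P,Q$ (of order $\leq\#J$) with a coefficient of a normalized Maurer--Cartan form (of order $\leq 8$, by Proposition \ref{prop-gen-patt} together with the order-$7$ normalizations). Since the order of a product of invariants is the maximum of their orders, $R_{J,a}$ has order $\leq\#J$ whenever $\#J\geq 8$. Inverting this relation writes each order-$(\#J+1)$ invariant $V_{Ja}$ as an invariant derivative of a lower-order invariant, minus a polynomial in strictly lower-order invariants.

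With the engine in place, Part 1 is an induction on order whose base consists of orders $7$ and $8$. At order $7$ the only non-phantom invariants are ${\rm Re}\>\CMJ,{\rm Im}\>\CMJ$ and $\CMK$, which are generators. At order $8$ the corrections to $\D_a(\text{order-}7)$ can themselves reach order $8$, so I would solve the resulting linear system expressing the first invariant derivatives of $\CMJ$ and $\CMK$ in terms of the eleven invariants \eqref{V8}; all of them are recovered except $V_{Z^4\oZ{}^4}$, whose only candidate predecessors $V_{Z^4\oZ{}^3}$ and $V_{Z^3\oZ{}^4}$ are phantom. This forces the eighth-order invariant $\CML=V_{Z^4\oZ{}^4}$ of \eqref{CML} to be adjoined as a fourth generator. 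For the inductive step, every $V_{Z^j\oZ{}^k U^\ell}$ of order $n+1\geq 9$ admits a non-phantom predecessor of order $n\geq 8$: if $\ell\geq 1$ strip a $U$; if $\ell=0$ and $j\geq 5$ strip a $Z$; if $\ell=0,\,j=4$ strip an $\oZ$ (and pass to the conjugate when $j\leq 3$). Applying the leading-order relation and the inductive hypothesis then exhibits $V_{Z^j\oZ{}^k U^\ell}$ as a polynomial in the invariant derivatives of ${\rm Re}\>\CMJ,{\rm Im}\>\CMJ,\CMK,\CML$, completing Part 1.

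For Part 2 the decisive input is that $M$ is three-dimensional, so along $M$ at most three differential invariants can be functionally independent. The $\CMK$-nondegeneracy hypothesis of Definition \ref{CMKnondegenerate}, i.e.\ the nonvanishing of \eqref{Knondegenerate}, says precisely that $\CMK,\D_a\CMK,\D_b\CMK$ are functionally independent and hence form a maximal such set; consequently each remaining generator ${\rm Re}\>\CMJ,{\rm Im}\>\CMJ,\CML$ is a function of them. To upgrade this to \emph{rational} dependence I would invoke that every normalized invariant is a rational function of the jet coordinates---the normalization equations and recurrence formulae being rational---so that the map to $(\CMK,\D_a\CMK,\D_b\CMK)$ is a dominant rational map, submersive by \eqref{Knondegenerate}; a rational function constant on the fibres of such a map descends to a rational function of the image coordinates. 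Feeding this back into Part 1 then expresses every $V_J$ as a rational function of $\CMK$ and its invariant derivatives.

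The main obstacle is the order-$8$ analysis. The clean leading-order relation for $\#J\geq 8$ is routine bookkeeping once the orders are tracked, but at the transition from order $7$ to order $8$ the correction terms attain the same order as the target invariants, so one must verify that the (constant-coefficient) linear system coupling the first derivatives of $\CMJ,\CMK$ to the eleven order-$8$ invariants is invertible over the invariants---this is exactly what guarantees the \emph{polynomial}, and not merely rational, conclusion in Part 1---and, crucially, isolate $\CML=V_{Z^4\oZ{}^4}$ as the unique order-$8$ invariant unreachable by differentiation. Getting this base case right, rather than the formally identical but easier inductive step, is where the genuine content and the necessity of the fourth generator $\CML$ reside.
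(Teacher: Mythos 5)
Your Part 1 has a genuine gap at the order-$8$ base case, and it is precisely at the verification step you flag as the ``main obstacle.'' The linear system \eqref{D-CMJK} is not invertible onto the eleven invariants \eqref{V8}: counting real quantities, it consists of only $9$ equations (six from $\D_a\CMJ$ and their conjugates, two from the complex $\DZ\CMK=\overline{\DoZ\CMK}$, one from the real $\DU\CMK$) in $11$ real unknowns, so \emph{two} order-$8$ quantities escape, not one. Besides $\CML=V_{Z^4\oZ{}^4}$, the invariant ${\rm Re}\>V_{Z^4\oZ{}^2U^2}$ is also unreachable by first-order differentiation: the relation for $\DU\CMK$ only involves the combination $-\frac{\i}{96}\,V_{Z^4\oZ{}^2U^2}+\frac{\i}{96}\,V_{Z^2\oZ{}^4U^2}$, i.e.\ ${\rm Im}\>V_{Z^4\oZ{}^2U^2}$, essentially because the phantom ${\rm Re}\>V_{Z^4\oZ{}^2U}$ was spent normalizing the Maurer--Cartan form ${\rm Re}\>\mu_{ZU}$ rather than producing an invariant. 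Your stripping argument for orders $\geq 9$ is sound (it is in substance the general result of Olver--Pohjanpelto that the paper cites), but as written your base case would force ${\rm Re}\>V_{Z^4\oZ{}^2U^2}$ in as a fifth generator. The paper closes exactly this hole with a second-order syzygy: it forms $\Delta=\DoZ V_{Z^3\oZ{}^4U}-\DZ V_{Z^2\oZ{}^5U}$ as in \eqref{Delta}, a combination chosen so that the ninth-order invariants cancel, evaluates $\Delta$ in two ways (once by differentiating the already-established polynomial expressions, once via the recurrence formulae), and solves the resulting identity for $V_{Z^4\oZ{}^2U^2}$ as a polynomial in $\CMJ,\CMoJ,\CMK,\CML$ and their derivatives. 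Your proposal contains no substitute for this step.

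Part 2 also has a gap: functional dependence plus descent does not deliver what the theorem asserts. The nondegeneracy condition \eqref{Knondegenerate} indeed makes $\CMK,\D_a\CMK,\D_b\CMK$ local coordinates on the $3$-dimensional hypersurface $M$, so every invariant restricted to $M$ is locally \emph{some} function of them --- but that function may vary from hypersurface to hypersurface, and nothing in this argument makes it rational or universal. Your attempted repair at the jet-space level fails because $\CMJ$ is emphatically \emph{not} constant on the fibres of the map $j\mapsto(\CMK,\D_a\CMK,\D_b\CMK)$ from the high-dimensional jet space to $\R^3$, so there is no well-defined function to descend. The theorem requires identities holding in the differential invariant algebra itself, and the paper obtains them via the commutator trick: applying the identity for $[\DZ,\DU]$ in \eqref{commutators} to $\CMK$, $\D_a\CMK$, $\D_b\CMK$ produces the inhomogeneous linear system \eqref{commutator1} for the three commutator invariants (which encode $\CMJ$, $V_{Z^5\oZ{}^3}$, and $\CML$), whose coefficient matrix has determinant exactly the quantity in \eqref{Knondegenerate}; Cramer's rule then yields universal rational expressions in $\CMK$ and its iterated derivatives, with that determinant in the denominator, and similarly for $[\DoZ,\DU]$. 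This syzygy-level mechanism, absent from your proposal, is what turns pointwise functional dependence into generation of the algebra by the single invariant $\CMK$.
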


\begin{proof}
The first step is to calculate the recurrence relations \eqref{dVJ} for $d\CMJ,d\CMoJ,d\CMK$, which, in view of \eqref{dVJ}, produces the following formulae for the $9$ differentiated Chern--Moser invariants
\begin{equation}\label{D-CMJK}
\aligned
\DZ\, \CMJ&=\frac{1}{240}\,V_{Z^6\oZ{}^2} + \frac{1}{48}\,\CML -\frac{55}8\,\CMJ ^2, \qquad
\DoZ\, \CMJ=\frac{1}{240}\,V_{Z^5\oZ{}^3}+\frac{1}{48}\,V_{Z^3\oZ{}^5}+\frac 58\,\CMJ \,\CMoJ   +8\,\CMK,
\\
\DU\, \CMJ&=\frac{1}{240}\,V_{Z^5\oZ{}^2U} + \frac{1}{48}\,V_{Z^3\oZ{}^4U}-\frac32\i \CMJ\, \CMK,\quad
\DZ\, \CMoJ =\overline{\DoZ\, \CMJ},\quad
\DoZ\, \CMoJ =\overline{\DZ\, \CMJ},\quad
\DU\, \CMoJ =\overline{\DU\, \CMJ},
\\
\DZ\, \CMK&=- \frac{\i}{96} V_{Z^5\oZ{}^2U} + \frac{\i}{96} V_{Z^3\oZ{}^4U}+\frac{5}{192}\,\CMJ \,V_{Z^5\oZ{}^3}+\frac{5}{192}\,\CMoJ \,\CML  + 5\,\CMJ\, \CMK, \qquad
\DoZ\, \CMK=\overline{\DZ\, \CMK},
\\
\DU \CMK&=-\frac{\i}{96} V_{Z^4\oZ{}^2U^2} + \frac{\i}{96} V_{Z^2\oZ{}^4U^2}+\frac{5}{192}\,\CMJ \, V_{Z^4\oZ{}^3U}+\frac{5}{192}\,\CMoJ \,  V_{Z^3\oZ{}^4U}.
\endaligned
\end{equation}
Thus, we can write $9$ of the order $8$ differential invariants \eqref{V8} as polynomial functions involving these first order differentiated invariants and the remaining $2$ differential invariants which we take to be $\CML = V_{Z^4\oZ{}^4}$ and $V_{Z^2\oZ{}^4U^2}$.  In fact, the only differential invariant that explicitly requires $V_{Z^2\oZ{}^4U^2}$ is its complex conjugate $V_{Z^4\oZ{}^2U^2}$; all the others can be written in terms of $\CMJ , \CMoJ   ,\CMK$, their derivatives, and $\CML $.

To write down the remaining invariant $V_{Z^2\oZ{}^4U^2}$, we need to look at the non-phantom recurrence relations \eqref{rec-rel} for the $8$-th order invariants.  Owing to \eqref{varpi}, these will also involve the $11$ non-phantom basic $9$-th order differential invariants $V_J$, $\#J = 9$; however, taking suitable combinations of the recurrence relations will eliminate any $9$-th order terms and produce a syzygy among the $7$-th and $8$-th order differential invariants.  In particular, consider
\begin{equation}\label{Delta}
\Delta = \DoZ V_{Z^3\oZ{}^4U} - \DZ V_{Z^2\oZ{}^5U} .
\end{equation}
By the preceding result, we can write $V_{Z^3\oZ{}^4U}$ and $V_{Z^2\oZ{}^5U}$ as polynomials involving $\CMJ, \CMoJ,\CMK, \CML$, and their derivatives, and hence, by differentiating these polynomials, $\Delta $ is itself such a polynomial combination\footnote{This and subsequent formulae were obtained with the help of Mathematica.  They are quite complicated and we have chosen not to write them down here.  Details are available from the authors upon request.}. On the other hand, we can substitute the formulae for $\DoZ V_{Z^3\oZ{}^4U}$ and $\DZ V_{Z^2\oZ{}^5U}$ coming from their recurrence formulae into $\Delta $; the result is a polynomial in the basic differential invariants of order $7$ and $8$.  Substituting our previous expressions for the $9$ eighth order invariants produces an expression that contains a constant multiple of $V_{Z^4\oZ{}^2U^2}$ plus a polynomial in $\CMJ , \CMoJ   ,\CMK$, their derivatives, and $\CML $.  By equating these two formulae for $\Delta $, we can solve for $V_{Z^4\oZ{}^2U^2}$ as a polynomial function of the four basic differential invariants $\CMK, \CMJ, \CMoJ, \CML$ and their  derivatives. This completes the proof of the first part of the theorem.

To prove the second statement for $\CMK$-nondegenerate hypersurfaces, we recall the ``commutator trick'' introduced in \cite{Olver-2009}.  We begin by substituting the preceding normalization formulae for the Maurer--Cartan forms into the structure equations \eqref{rrihf} for the invariant horizontal forms\footnote{As before, we suppress any contributions from contact forms.}
\begin{equation*}
\begin{aligned}
d\omega^Z&=\frac{5}{8}\,\CMoJ \,\omega^Z\wedge\omega^{\oZ}-\i\left(\frac1{96}V_{Z^5\oZ{}^3}+4\,\CMK \right)\,\omega^Z\wedge\omega^U- \frac\i{96}\,\CML \,\omega^{\oZ}\wedge\omega^U,
\qquad d\omega^{\oZ}&=\overline{d\omega^Z},
\\
d\omega^U&=2\i \omega^Z\wedge\omega^{\oZ}+\frac{5}{4} \,\CMJ \, \omega^Z\wedge\omega^U+\frac{5}{4}\, \CMoJ \, \omega^{\oZ}\wedge\omega^U.
\end{aligned}
\end{equation*}
The coefficients appearing on the right-hand side are known as the \emph{commutator invariants} since they form the coefficients in the formulae for the commutators of the invariant differential operators
\begin{equation}\label{commutators}
\aligned
&[\DZ, \DoZ]=-\frac{5}{8}\,\CMoJ \,\DZ+\frac{5}{8}\,\CMJ \,\DoZ-2\i\DU,
\\
&[\DZ, \DU]=\i\left(\frac1{96}V_{Z^5\oZ{}^3}+4\,\CMK \right)\DZ - \frac\i{96}\,\CML \,\DoZ-\frac{5}{4}\,\CMJ \,\DU,\\
&[\DoZ, \DU]=\frac\i{96}\,\CML \,\DZ-\i\left(\frac1{96}V_{Z^3\oZ{}^5}+4\,\CMK \right)\DoZ-\frac{5}{4}\,\CMoJ \,\DU.
\endaligned
\end{equation}
Now if we apply the second commutator identity in \eqref{commutators} to the differential invariant $\CMK $ and its derivatives, we obtain the following syzygies
\begin{equation}\label{commutator1}
\aligned
&[\DZ, \DU]\,\CMK=\i\left(\frac1{96}V_{Z^5\oZ{}^3}+4\,\CMK \right)\DZ\CMK - \frac\i{96}\,\CML \,\DoZ\CMK-\frac{5}{4}\, \CMJ \,\DU\CMK,\\
&[\DZ, \DU]\,\D_a\CMK=\i\left(\frac1{96}V_{Z^5\oZ{}^3}+4\,\CMK \right)\DZ\D_a\CMK - \frac\i{96}\,\CML \,\DoZ\D_a\CMK-\frac{5}{4}\,\CMJ \,\DU\D_a\CMK,\\
&[\DZ, \DU]\,\D_b\CMK=\i\left(\frac1{96}V_{Z^5\oZ{}^3}+4\,\CMK \right)\DZ\D_b\CMK - \frac\i{96}\,\CML \,\DoZ\D_b\CMK-\frac{5}{4}\,\CMJ \,\DU\D_b\CMK,\\
\endaligned
\end{equation}
where $a,b \in \{Z,\oZ,U\}$.  We treat \eqref{commutator1} as a system of inhomogeneous linear algebraic equations for the three commutator invariants $\frac\i{96}V_{Z^5\oZ{}^3}+4\i\CMK,\> -\frac\i{96}\,\CML ,\>-\frac{5}{4}\,\CMJ$, whose coefficients and inhomogeneous terms involve only $\CMK $ and its iterated invariant derivatives (up to degree $3$).  Thus, if the $\CMK$-nondegeneracy condition \eqref{Knondegenerate} holds, then we can solve this system of linear equations to express the commutator invariants, and hence $\CMJ,V_{Z^5\oZ{}^3},\CML $, as rational combinations of $\CMK $ and its derivatives, the denominator being the nonzero determinant in \eqref{Knondegenerate}.  A similar argument applied to the third operator identity in \eqref{commutators} allows us to similarly express $\CMoJ,V_{Z^3\oZ{}^5},\CML $, the latter having thus two different such formulae\footnote{In general, differentiated invariants can have many such formulae owing to the variety of syzygies among the differential invariants.}.
\end{proof}

An alternative approach to the second result that avoids the commutator trick, and thereby reveals some additional interesting structure arising from the recurrence formulae, proceeds as follows.  Let us compute all $27$ second derivatives $\D_a \D_b \CMJ, \D_a \D_b \CMoJ,\D_a \D_b  \CMK$ by differentiating \eqref{D-CMJK}.  Replacing the 10 eighth order differential invariants by the preceding expressions and then eliminating the $11$ ninth order basic differential invariants (which occur linearly) from the resulting identities, leads to $11$ independent syzygies.  The eighth order invariant $\CML $ occurs linearly in these identities, but its coefficients depend on $\CMJ,\CMoJ,\CMK$ and their first order derivatives.  Thus, one can only solve for $\CML $ in terms of $\CMJ,\CMoJ,\CMK$ and their derivatives if at least one of these coefficients is nonzero.  Closer inspection reveals the following

\begin{Theorem}
For a general nondegenerate hypersurface in $\mathbb C^2$, if any one of the following invariants does not vanish
\begin{equation}\label{DZJK24}
\DZ \,\CMJ,  \quad \DoZ\, \CMJ, \quad \DZ\, \CMK,  \quad \DoZ\, \CMK, \quad \ \DoZ\, \CMJ + \frac{24}{5} \left(\CMK - \frac{25}{24} \,|\CMJ|^2\right),
\end{equation}
then one can write $\CML $ rationally in terms of $\CMJ,\CMoJ,\CMK$ and their first and second order invariant derivatives with one of these quantities appearing in the denominator\footnote{If more than one is nonzero, then one can construct several such expressions; their equivalence is a consequence of the various syzygies among $\CMJ,\CMoJ,\CMK$ and their invariant derivatives.}.
\end{Theorem}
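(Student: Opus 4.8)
The plan is to carry out in detail the elimination scheme sketched immediately before the statement, and then to inspect the resulting coefficients of $\CML$. First I would apply each of the three invariant differential operators $\DZ, \DoZ, \DU$ to each of the nine first-order formulas in \eqref{D-CMJK}, producing a system of $27$ second-order derivative identities for $\CMJ, \CMoJ, \CMK$. Since every right-hand side in \eqref{D-CMJK} is a polynomial in the seventh- and eighth-order invariants, differentiating it requires the recurrence relations \eqref{rec-rel} for those eighth-order invariants, and these in turn introduce the eleven basic ninth-order invariants $V_J$ with $\#J = 9$. Crucially, each such ninth-order invariant enters linearly, and the eighth-order invariant $\CML$, being of order $8$, also appears only linearly throughout.

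Next I would substitute, for the ten eighth-order invariants other than $V_{Z^2\oZ{}^4U^2}$, the expressions obtained in the proof of Theorem \ref{J} that write them as polynomials in the differentiated Chern--Moser invariants together with $\CML$. With all eighth-order data expressed this way, the $27$ identities become linear in the eleven ninth-order invariants, with coefficients that are polynomials in the seventh- and eighth-order invariants. Viewing the eleven ninth-order invariants as unknowns, I would eliminate them by forming suitable linear combinations of the $27$ identities, which --- as recorded in the discussion preceding the theorem --- yields $11$ independent syzygies purely among the invariants of orders $7$ and $8$. In each of these, $\CML$ still appears linearly, its coefficient being a polynomial in $\CMJ, \CMoJ, \CMK$ and their first-order invariant derivatives.

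The decisive step is to compute these $\CML$-coefficients explicitly (a task for computer algebra) and to show that the family of nonvanishing conditions they encode collapses to the five quantities in \eqref{DZJK24}. Four of them, namely $\DZ\,\CMJ, \DoZ\,\CMJ, \DZ\,\CMK, \DoZ\,\CMK$, should emerge directly as $\CML$-coefficients of individual syzygies, while the fifth and more intricate combination $\DoZ\,\CMJ + \frac{24}{5}(\CMK - \frac{25}{24}|\CMJ|^2)$ should arise as the $\CML$-coefficient of a specific linear combination of syzygies, its particular shape being dictated by the cancellations among the lower-order terms. Whenever one of these five coefficients is nonzero, the corresponding syzygy can be solved for $\CML$ as a rational function of $\CMJ, \CMoJ, \CMK$ and their first- and second-order invariant derivatives, with that coefficient furnishing the denominator.

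The main obstacle is precisely this last identification. The elimination of the ninth-order invariants is, in principle, routine linear algebra over the polynomial ring generated by the lower-order invariants, but organizing the $27$ identities so that the eleven syzygies come out in usable form, and then recognizing that their $\CML$-coefficients reduce --- up to nonzero polynomial factors and the one prescribed linear combination --- to exactly the list \eqref{DZJK24}, requires careful bookkeeping and is where the essential content of the theorem resides. In particular, one must verify that no further independent nonvanishing condition is needed, that is, that the five listed quantities already exhaust the cases in which $\CML$ becomes expressible; this is the ``closer inspection'' referred to above.
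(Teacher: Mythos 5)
Your proposal follows essentially the same route as the paper: the paper's own argument is precisely the elimination scheme sketched in the paragraph preceding the theorem --- differentiate the nine identities in \eqref{D-CMJK} to obtain the $27$ second-order relations, substitute the expressions for the other eighth-order invariants, eliminate the eleven ninth-order invariants (which occur linearly) to produce $11$ syzygies in which $\CML$ appears linearly, and then verify by ``closer inspection'' (i.e., computer algebra) that the $\CML$-coefficients reduce to the five quantities in \eqref{DZJK24}. Like the paper, you defer the decisive coefficient identification to symbolic computation, so your write-up matches the paper's proof in both structure and level of detail.
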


The most interesting is the last quantity in \eqref{DZJK24}.  If $\CMJ,\CMoJ,\CMK$ are all constant, then the  syzygy that it appears in takes the reduced form
\begin{equation}\label{2524}
\left(\CMK - \frac{25}{24} \,|\CMJ|^2\right)\left(\CML  - \frac{75}2 \,{\rm Re}\> ( J^2)\right) = 192.
\end{equation}
This implies that the initial factor cannot vanish: $\CMK \ne \frac{24}{25} \,|\CMJ|^2$ when both are constant. Under this assumption, one can then express $\CML $ as an explicit rational function of $\CMJ,\CMoJ,\CMK$, which in particular implies that it is also constant.  We conclude that if $\CMJ,\CMoJ,\CMK$ are constant, then so are all the higher order differential invariants, which implies that the hypersurface possesses a three-dimensional symmetry group and hence is a subset of one of its orbits in $\C^2$. See Section \ref{sec-iso} for a complete classification of such ``maximally symmetric'' hypersurfaces. We find the required non-vanishing constraint among the constant differential invariants quite surprising, and worthy of further investigation as to its significance.  On the other hand, if $\CMJ$ and $\CMK$ are not constant, the corresponding syzygy just imposes one more equation relating their derivatives, and does not produce any non-vanishing constraint.

If all the quantities in \eqref{DZJK24} vanish and the hypersurface is $\CMK$-degenerate, then it appears that all $4$ differential invariants  $\CMJ,\CMoJ,\CMK,\CML $ are required to generate the differential invariant algebra.  On the other hand, we cannot completely rule out the existence of an even higher order syzygy that can be solved for $\CML $ without any restrictions on $\CMJ,\CMoJ,\CMK$, although, based on what we have been able to compute, this seems highly unlikely.  Given the difficulty of the preceding computations, rigorously establishing this assertion would be quite challenging.

\subsection{Equivalence and rigidity of non-umbilic hypersurfaces}
\label{sec-equi-rigid}

According to Cartan --- see \cite{Olver-Fels-99,Olver-1995} --- under suitable regularity conditions, two analytic submanifolds are locally congruent (equivalent) under the action of a pseudo-group if and only if all differential invariants, when evaluated thereon, have identical functional interrelationships (syzygies). Since, by the Lie--Tresse Theorem, \cite{Kruglikov-Lychagin-16}, all higher order differential invariants can be obtained by invariantly differentiating a finite collection of generating invariants, it suffices to check the syzygies among a suitable finite collection, which will parametrize the differential invariant signature (or classifying manifold) that characterizes submanifolds up to local congruence; see \cite{Olver-1995} for details.   The specification of which differential invariants are required to construct the signature depends on the underlying structure of the differential invariant algebra and the order at which the submanifold jet becomes nonsingular. We shall, in addition, assume that the submanifolds are \emph{signature regular}, meaning that their differential invariant signature forms a manifold of a fixed dimension, or, equivalently, the number of functionally independent differential invariants does not vary. Two signature regular submanifolds are locally congruent if and only if their signatures are the same when evaluated on the corresponding open subsets.

In our case, since the hypersurface $M$ has dimension $3$, under the Cartan regularity assumption, the number of functionally independent differential invariants on it, or, equivalently, the dimension of the signature, is (locally) either $0,1,2$ or $3$. The first case occurs when all the differential invariants are constant.  Now since for any non-umbilic hypersurface, the differential invariant algebra is generated by the Chern--Moser invariants\footnote{One should keep in mind that the signature so constructed retains the sign ambiguities \eqref{jksign}.  These can be eliminated by replacing $\CMJ$ and $ V_{Z^j\oZ{}^kU^\ell}$ for $j+k$ odd by $\CMJ^2$ and $\CMJ \, V_{Z^j\oZ{}^kU^\ell}$, respectively, when parametrizing the signature.} ${\rm Re} \ \CMJ, {\rm Im} \ \CMJ,\CMK, \CML$, the signature has differential invariant order $7 \leq k \leq 10$.  The lowest order $k=7$ is when $\CMJ, \CMK, \CML$ are constant and the hypersurface has a three-dimensional local symmetry group.
The maximal order $k=10$ occurs when there are $3$  functionally independent differential invariants, only one of which can be found among ${\rm Re}\>\CMJ, {\rm Im}\>\CMJ,\CMK$; a second among their invariant derivatives of degree $1$ and $\CML$, and a third among the latters' invariant derivatives (which are differential invariants of order $9$), in which case the syzygies of order $\leq 10$ are required to completely determine all those of higher order.

These considerations immediately produce a characterization of the rigidity properties of hypersurfaces under holomorphic maps.  First, recall that two submanifolds $M, \widetilde M$ are said to have \emph{order $k$ contact} at a common point $\bp \in M \cap \widetilde M$ if they have the same $k$-th order jet there.   In our situation, the contact and rigidity properties rely on the jet coordinates provided by the partial derivatives of the defining function $v = f(z,\oz,u)$ with respect to its three arguments. The following definition is based on \cite{Olver-Fels-99}; see also \cite{Jensen}.

\begin{Definition}
A nonsingular hypersurface $M$ is called \emph{order $k$ rigid} if, whenever $\widetilde M$ and $M$ have order $k$ contact at a common point $\bp \in M \cap \widetilde M$ and $\widetilde M = g \cdot M$ for some holomorphic transformation $g \in \G$, then necessarily $\widetilde M = M$.
\end{Definition}

\begin{Theorem}
\label{rigid}
A non-umbilic hypersurface $M \subset \mathbb C^2$ that is both signature regular and fully regular at order $\leq 10$ has rigidity order at most $10$.
\end{Theorem}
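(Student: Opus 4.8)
The plan is to show that order $10$ contact together with the congruence $\widetilde M = g\cdot M$ forces the full Taylor jets of $M$ and $\widetilde M$ to agree at $\bp$, whence, by real-analyticity, $\widetilde M = M$ on a neighborhood of $\bp$. Write $q := g^{-1}(\bp)\in M$, so that the prolonged action carries the jet of $M$ at $q$ to the jet of $\widetilde M$ at $\bp$. Since the normalized coefficients $V_J$ are differential invariants, this already gives $V_J|_{\widetilde M}(\bp) = V_J|_M(q)$ for every multi-index $J$. On the other hand, order $10$ contact means that the $10$-jets of $M$ and $\widetilde M$ coincide at $\bp$, so $V_J|_{\widetilde M}(\bp) = V_J|_M(\bp)$ for every invariant $V_J$, and indeed for every iterated invariant derivative $\D_a\cdots\D_b V_J$, of order $\leq 10$. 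Combining these, all differential invariants of order $\leq 10$ take equal values at the two points $\bp$ and $q$ of $M$.

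The heart of the argument is to bootstrap this to all orders. By the discussion preceding the theorem, signature regularity guarantees that the differential invariant signature of $M$ is a manifold parametrized by at most three functionally independent invariants, all of which occur among the invariants of order $\leq 10$, while the remaining invariants are tied to these by syzygies that hold identically on $M$. Using Theorem \ref{J}, every $V_J$ is a polynomial in the Chern--Moser generators ${\rm Re}\,\CMJ,{\rm Im}\,\CMJ,\CMK,\CML$ and their repeated invariant derivatives, and hence, via the recurrence relations \eqref{dVJ} and the syzygies cutting out the signature, is expressible as a function of the chosen functionally independent invariants alone. Since those independent invariants agree at $\bp$ and $q$, applying the corresponding syzygy functions yields $V_J|_M(\bp) = V_J|_M(q)$ for all $J$ of every order. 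Feeding this back through $V_J|_{\widetilde M}(\bp) = V_J|_M(q)$ shows that $M$ and $\widetilde M$ have identical normalized invariants, i.e.\ the same Chern--Moser normal form, at $\bp$.

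It remains to pass from equality of the full collection of invariants to equality of the actual jets. Because $M$ is non-umbilic and fully regular, the prolonged action is free on $\mathcal V\n$ for $n\geq 7$, so the equivariant moving frame $\rho$ is a genuine section determined already by the $7$-jet, and the jet at a point is recovered from the pair $(\rho,\{V_J\})$ by applying $\rho^{-1}$ to the normal form. Order $10$ contact includes order $7$ contact, so $M$ and $\widetilde M$ share the same $7$-jet, hence the same frame $\rho$, at $\bp$; this simultaneously pins down the discrete $z\mapsto -z$ sign ambiguity through the odd-order terms of the $7$-jet. With both the frame and all normalized invariants in agreement, the reconstruction map yields identical $N$-jets of $M$ and $\widetilde M$ at $\bp$ for every $N$, and analyticity gives $\widetilde M = M$ locally.

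I expect the main obstacle to be the bootstrapping step: one must argue carefully that no functionally independent differential invariant of order exceeding $10$ arises beyond those already accounted for, so that agreement of the order-$\leq 10$ data genuinely propagates to all orders. This is precisely where the joint hypotheses are essential --- signature regularity guarantees that the signature stabilizes as a manifold of constant dimension by order $10$, while full regularity at order $\leq 10$ ensures freeness of the action, so that both the generating structure of Theorem \ref{J} and the reconstruction from the moving frame are valid. A secondary technical point, easily dispatched, is the consistent treatment of the discrete sign ambiguity, which is resolved by the order $7$ contact.
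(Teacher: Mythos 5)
Your argument is, in essence, a from-scratch reconstruction of the theorem the paper merely cites: the paper's entire proof is a two-line appeal to Theorem~14.13 of Fels--Olver \cite{Olver-Fels-99}, together with freeness of the prolonged action on the fully regular subset \eqref{fully regular}, with signature regularity having already been used in the discussion preceding the theorem to bound the signature order by $10$. Your steps correctly unpack why $10$ is the relevant order: invariance of the $V_J$ gives $V_J|_{\widetilde M}(\bp)=V_J|_M(q)$ with $q=g^{-1}(\bp)$; order-$10$ contact gives agreement of all invariants of order $\leq 10$ at $\bp$; and the signature bootstrap propagates this to invariants of all orders, matching the paper's remark that the syzygies of order $\leq 10$ determine all those of higher order. (One clause worth adding there: since $q$ need not be close to $\bp$, you need the syzygy functions to be single-valued on the whole signature, not merely locally near $\bp$ --- this is precisely what the paper's definition of \emph{signature regular}, the signature forming a manifold of fixed dimension, is designed to guarantee.)

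The one step that fails as written is the reconstruction: ``the equivariant moving frame $\rho$ is a genuine section determined already by the $7$-jet, and the jet at a point is recovered by applying $\rho^{-1}$ to the normal form.'' That is the finite-dimensional intuition, and it is false for the infinite-dimensional pseudo-group $\G$: what happens at order $7$ is only that \emph{all Maurer--Cartan forms become normalizable}, i.e.\ the isotropy becomes discrete. The frame itself has components of every order, with the order-$n$ components determined by the phantom cross-section equations of order $n$ (the normalizations in Lemmas \ref{lem-order-1}--\ref{lem-ord-6} and Proposition \ref{prop-gen-patt} involve jets of arbitrarily high order). So ``same $7$-jet $\Rightarrow$ same frame'' does not hold, and recovering the $N$-jet from $(\rho,\{V_J\})$ presupposes knowledge of the $N$-jet --- the argument is circular at exactly this point. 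The repair is the standard order-by-order induction, which is the actual content of the cited Fels--Olver theorem: full regularity at order $\leq 10$ propagates to every order $n\geq 10$ (some odd-order invariant of order $\leq 10$ is nonzero, so the $n$-jet avoids $\mathcal{S}^{(n)}$), hence the action is free at each such order; assuming the $n$-jets of $M$ and $\widetilde M$ agree at $\bp$, the order-$(n+1)$ phantom equations determine equal order-$(n+1)$ frame components, and the non-phantom normalized invariants of order $n+1$ are then in invertible affine correspondence with the remaining order-$(n+1)$ jet coordinates, so their equality --- already secured by your bootstrap --- forces equality of the $(n+1)$-jets. With the base case supplied by the order-$10$ contact, induction plus analyticity gives $\widetilde M=M$. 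A related small correction: the frame, and with it the resolution of the $z\mapsto -z$ ambiguity, may require the jet up to order $10$ rather than $7$, since the hypothesis only places the $10$-jet in the fully regular set.
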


In other words, if the normal forms for both $M$ and $g \cdot M$ at a common point $\bp \in M \cap (g \cdot M)$ are the same up to and including the order $10$ terms, then $M = g \cdot M$.  This follows immediately from \cite[Theorem 14.13]{Olver-Fels-99} using the fact that the pseudo-group acts freely on the fully regular subset \eqref{fully regular}.

We remark that the assertion of Theorem \ref{rigid} is quite different from the known  ``finite jet determination'' result of Chern--Moser which states that every CR diffeomorphism between two nondegenerate hypersurfaces is determined uniquely by its jets of order $\leq 2$ at a certain point. See \cite[Theorem 1]{BER-98} for a generalization of this result.

\subsection{Umbilic hypersurfaces}\label{umb}

As noted at the end of Section \ref{sec-NF-general-comp}, a surface is umbilic in a neighborhood of $\bp$ if $V_{Z^4\oZ{}^2}\equiv 0$ vanishes there.  In this case, the second recurrence relation in \eqref{eq: rec-order-6} then reduces to
\[
0 \equiv \varpi_{Z^4\oZ{}^2} = V_{Z^5\oZ{}^2}\,\omega^Z + V_{Z^4\oZ{}^3}\,\omega^{\oZ} + V_{Z^4\oZ{}^2U}\,\omega^U,
\]
which implies that
\begin{equation}\label{eq: order 7 vanishing invariants}
V_{Z^5\oZ{}^2}\equiv V_{Z^4\oZ{}^3} \equiv V_{Z^4\oZ{}^2U}\equiv 0.
\end{equation}
The recurrence relations for the vanishing invariants \eqref{eq: order 7 vanishing invariants} imply that $V_{Z^{\alpha} \oZ{}^{\beta} U^\gamma} \equiv 0$ when $\alpha+\beta+\gamma = 8$.  An inductive argument, combined with our previous normalizations, shows that $V_J \equiv 0$ for all $J$ except, of course, $V_{Z\oZ} = 1$, and hence the normal form at an umbilic point collapses to a single term.  Thus, in the locally umbilic case, none of the five Maurer--Cartan forms \eqref{MC-5} can be fully normalized, which reflects the fact that the isotropy group at the umbilic point is five-dimensional.  We have thus re-established the well-known result that an umbilic hypersurface is locally biholomorphically equivalent to the Heisenberg sphere, \cite[Theorem 7.1]{Jacobowitz}.

\begin{Theorem}
Let $M\subset\mathbb C^2$ be umbilic in a neighborhood of $\bp$. Then, there exists a holomorphic map of $\mathbb C^2$ transforming $M$ locally to the Heisenberg sphere
\begin{equation}\label{Hsphere}
\mathbb H: \qquad v=z\oz.
\end{equation}
This transformation is unique up to the action of the five-dimensional isotropy group whose infinitesimal generators are the holomorphic vector fields $\vv_1, \ldots, \vv_5$ introduced in \eqref{G8}.
\end{Theorem}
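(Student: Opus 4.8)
The plan is to show that umbilicity throughout a neighborhood forces \emph{every} normalized differential invariant appearing in the partial normal form \eqref{NF-pre} to vanish identically, leaving only $V_{Z\oZ}=1$, so that \eqref{NF-pre} collapses to $v=z\oz$; the residual five-parameter freedom in the partial moving frame then accounts for the uniqueness assertion.

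First I would record that, by Definition \ref{umbilic} and the relation $\CMR=V_{Z^4\oZ{}^2}/6$, umbilicity on a neighborhood of $\bp$ is precisely the identity $V_{Z^4\oZ{}^2}\equiv 0$ there. Since a function that vanishes identically has vanishing differential, substituting this into the second recurrence relation of \eqref{eq: rec-order-6} yields $0=\varpi_{Z^4\oZ{}^2}$, and the linear independence of the horizontal coframe $\omega^Z,\omega^{\oZ},\omega^U$ forces the three seventh order invariants in \eqref{eq: order 7 vanishing invariants} to vanish, together with their conjugates.

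The heart of the argument is an induction on the jet order $n$. Assuming that every non-phantom normalized invariant $V_J$ with $7\leq \#J\leq n-1$ vanishes identically, I would apply the recurrence relation \eqref{rec-rel} to each such $V_K$ with $\#K=n-1$: since $V_K\equiv 0$ gives $dV_K=0$, the horizontal part $\varpi_K$ must cancel against the horizontal contributions of the already-normalized Maurer--Cartan forms listed in Proposition \ref{prop-gen-patt}. The key structural point, read off from the prolongation formula \eqref{phijkl}, is that every invariant coefficient appearing in the correction term $\phi^K$ has order at most $n-1$, so that --- apart from the constant $V_{Z\oZ}=1$ --- all such coefficients vanish by the inductive hypothesis. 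Consequently the $\omega$-coefficients of $dV_K=0$ reduce to the order $n$ invariants $V_{K,Z},V_{K,\oZ},V_{K,U}$, which are therefore zero. A short bookkeeping check shows that, as $K$ ranges over the order $n-1$ non-phantom invariants, the resulting $V_{K,Z},V_{K,\oZ},V_{K,U}$ exhaust every order $n$ non-phantom invariant, closing the induction. Hence $V_J\equiv 0$ for all $\#J\geq 7$, and \eqref{NF-pre} reduces to the Heisenberg sphere \eqref{Hsphere}.

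I expect the main obstacle to be precisely this inductive step, and in particular verifying that the five unnormalized Maurer--Cartan forms \eqref{MC-5} drop out consistently: when all the invariants vanish, the portions of the recurrence relations \eqref{rec-rel} that, in the non-umbilic case, served to normalize $\mu_U,\overline\mu_U,\mu_Z,\overline\mu_{\oZ},{\rm Re}\,\mu_{ZU}$ must instead reduce to the trivial identity $0=0$; this is exactly what forbids any further normalization and exhibits the isotropy as five-dimensional. For the uniqueness claim I would then argue that any two holomorphic maps carrying $M$ to $\mathbb H$ near $\bp$ and sending $\bp$ to the origin differ by a holomorphic self-map of $\mathbb H$ fixing the origin, that is, an element of the isotropy subgroup; since the first five vector fields in \eqref{G8} span the isotropy subalgebra of $\mathbb H$ at the origin, this residual ambiguity is exactly the five-dimensional group generated by $\vv_1,\dots,\vv_5$, in agreement with the five surviving Maurer--Cartan forms.
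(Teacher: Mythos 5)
Your proposal is correct and follows essentially the same route as the paper: deducing $\varpi_{Z^4\oZ{}^2}=0$ from the second recurrence relation in \eqref{eq: rec-order-6} to kill the order-seven invariants, propagating the vanishing to all higher orders by induction through the recurrence relations, and reading off the five-dimensional isotropy from the five surviving Maurer--Cartan forms \eqref{MC-5}. The paper states the inductive step only in outline (``an inductive argument, combined with our previous normalizations''), and your bookkeeping---including the flagged verification that the terms involving only powers of $V_{Z\oZ}$ contribute nothing, which is precisely what the paper's analysis following \eqref{iphijkl} establishes---fills in that same sketch rather than departing from it.
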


\begin{Remark}
As shown by Chern and Moser, \cite{Chern-Moser}, the ordinary sphere $\mathbb S^3 = \{ |z|^2 + |w|^2 = 1\}$ is locally congruent to the Heisenberg sphere, and globally congruent if one removes a single point.  More generally, using the results in \cite[Theorem 14.30]{Olver-1995}, any connected hypersurface that is everywhere locally equivalent to the Heisenberg sphere must be globally equivalent modulo generalized covering maps, meaning surjective maps that are everywhere local diffeomorphisms.  This raises the interesting question of geometrically characterizing when such covering hypersurfaces can be embedded in $\C^2$.  A similar observation holds when a non-umbilic hypersurface has all constant differential invariants and so is locally equivalent to an orbit $\mathcal O$ of a three-dimensional symmetry group, and globally equivalent up to covering.
\end{Remark}

\subsection{Singularly umbilic points --- preliminaries}
\label{sec-semi-umb}

Let us now turn our attention to the previously unstudied case of a singularly umbilic point $\bp \in M$, meaning a point at which the Cartan curvature vanishes, but is not identically zero in a neighborhood thereof.  In this case, the partial normal form \eqref{NF-pre} contains no sixth order terms, but, unlike the locally umbilic case, it contains nonzero terms of higher order.  We use the non-vanishing term(s) of lowest order to specify the type of the singularly umbilic point.

\begin{Definition}\label{singularly-umbilic-def}
The singularly umbilic point $\bp\in M \subset \mathbb{C}^2$ is said to be of \emph{umbilic order} $n_0>6$ if there exists $\alpha,\beta,\gamma\in \N_0$ such that
\begin{itemize}
\item $n_0=\alpha+\beta+\gamma > 6$, with $ \alpha \geq 4$, $\beta \geq 2$, $\gamma \geq 0$, and $\alpha  \geq \beta $;
\item $V_{Z^\alpha  \oZ{}^\beta  U^\gamma }|_{\bp}\neq 0$;
\item $n_0$ is minimal in the sense that $V_{Z^j \oZ{}^k U^\ell}|_{\bp}=0$ at $\bp$ for all $j, k, \ell\in \N_0$ with $6 \leq j+k+\ell<n_0$.
\end{itemize}
The triple $(\alpha,\beta,\gamma)$ is called the \emph{umbilic type} of the point $\bp\in M$.
\end{Definition}

\begin{Remark}
We note that the notion of umbilic order is uniquely defined, while the umbilic type $(\alpha,\beta,\gamma)$ is not necessarily unique. Indeed, it is conceivable that there exist two triples $(\alpha,\beta,\gamma)$ and $(\alpha^\prime,\beta^\prime,\gamma^\prime)$ of the same order $n_0 = \alpha+\beta+\gamma = \alpha^\prime+\beta^\prime+\gamma^\prime$ such that $V_{Z^{\alpha} \oZ{}^{\beta} U^\gamma}|_{\bp}\neq 0$ and $V_{Z^{\alpha^\prime} \oZ{}^{\beta^\prime} U^{\gamma^\prime}}|_{\bp}\neq 0$.  In particular, If $V_{Z^{\alpha} \oZ{}^{\beta} U^\gamma}|_{\bp}\neq 0$, then its complex conjugate $V_{Z^{\beta} \oZ{}^{\alpha} U^\gamma}|_{\bp}\neq 0$ also, which explains our extra condition $\alpha \geq \beta $ when specifying the type.  If $\alpha=\beta $ then these two lifted invariants are equal and real.
\end{Remark}

We use these observations to impose a refinement of the notion of a singularly umbilic point.

\begin{Definition}
Suppose that $\bp \in M$ is a singularly umbilic point of order $n_0$.  Then we call it
\begin{itemize}
  \item[$\bullet$] \emph{generic}\footnote{One should not confuse this definition of generic hypersurface $M$ with the standard definition of generic manifold in CR geometry.} if it has an umbilic type $(\alpha,\beta,\gamma)$ with $\alpha > \beta $.
  \item[$\bullet$] \emph{semi-circular} if a) all umbilic types $(\alpha,\beta ,\gamma)$ of $\bp$ satisfy $\alpha=\beta$, and b) there exists a lifted invariant of order $\sctypez + \sctypeoz + \sctypeu > n_0$ with $\sctypez \ne \sctypeoz $ such that $V_{Z^\sctypez \oZ{}^\sctypeoz U^\sctypeu}|_{\bp}\neq 0$. We call $(\sctypez ,\sctypeoz ,\sctypeu )$ the \emph{semi-circular type} of $\bp$.
  \item[$\bullet$] {\it circular} otherwise. In other words, if $\bp$ is a circular singularly umbilic point, the only potentially nonzero monomials $z^j\oz^k u^\ell$ in the corresponding partial normal form \eqref{NF-pre} are those for which $j = k$, and hence the Taylor expansion has the form
\begin{equation}
\label{circularnf}
v=z\oz+\sum_{2k+\ell \geq n_0 \geq 8 \atop k \geq 4}\frac{V_{Z^k\oZ{}^kU^\ell} }{k!^2\,\ell!} (z\oz)^ku^\ell.
\end{equation}
\end{itemize}
\end{Definition}

We observe that the circular normal form \eqref{circularnf} admits the rotational automorphism $z \longmapsto e^{\i t} z$ generated by the ``circular'' vector field $\i z \, \partial _z$.

Thus, to deal with the singularity at order 6 caused by the vanishing of $V_{Z^4\oZ{}^2}$ at $\bp$, we pass to the order $n_0$ prolonged action, which is where the next non-vanishing partially normalized differential invariant appears.

\subsection{Generic singularly umbilic points}
\label{sec-generic-semi-umb}

Let us first treat the case of a generic singularly umbilic point, where
 $\bp$ has umbilic type $(\alpha,\beta,\gamma)$ and order $n_0=\alpha+\beta+\gamma > 6$ with $\alpha > \beta $.  Then we can impose the two independent normalizations
\begin{equation}
\label{V4a2bg}
V_{Z^{\alpha} \oZ{}^{\beta} U^\gamma}=V_{Z^{\beta} \oZ{}^{\alpha} U^\gamma}=\sigma ,
\end{equation}
where $\sigma$ is a {\it nonzero} real constant whose value will be specified later.  As we will see, one can then use the two resulting phantom recurrence formulae
\begin{equation}
\label{V4a2bg0}
dV_{Z^{\alpha} \oZ{}^{\beta} U^\gamma}=dV_{Z^{\beta} \oZ{}^{\alpha} U^\gamma}=0
\end{equation}
to normalize the Maurer--Cartan forms $\mu_Z$ and $\overline\mu_{\oZ}$.

Before continuing, we need to analyze in more details the recurrence formulae obtained by invariantization of the prolonged infinitesimal generator coefficients.  Suppose $n = j+k+\ell \geq 2$. The invariantization of formula \eqref{phijkl} for the infinitesimal generator coefficients takes the form
\begin{equation}
\label{iphijkl}
\aligned
&\phi^{z^j\oz^k u^{\ell}}(\bZ\n,\bmu\n)=-V_{Z^j\oZ{}^k U^\ell}\,\bigl[j\,\mu_{Z}+k\,\overline\mu_{\oZ}+(\ell-1)\,\mcu_U\bigr]
-\ell\,V_{Z^{j+1}\oZ{}^k U^{\ell-1}} \,\mu_{U}\\
& \hskip1.6in {}-\ell\,V_{Z^{j}\oZ{}^{k+1} U^{\ell-1}}\,\overline\mu_{U} - \ell\,V_{Z^j\oZ{}^k U^{\ell-1}}\left[j\,\mu _{ZU}+k\,\overline\mu_{{\oZ}U}+\frac{\ell-3}2\,\mcu_{UU}\right] 
\\& \hskip1.6in {} + \i{\tt N}_{jk\ell}\,\mu_{U} - \i\overline{{\tt N}_{kj\ell}}\,\overline\mu_{U} - \i{\tt P}_{jk\ell}\,\mu _{ZU}+  \i\overline{{\tt P}_{kj\ell}}\,\overline\mu_{{\oZ}U} + {\tt Q}_{jk\ell}\,\mcu_{UU}+\>\cdots,
\endaligned
\end{equation}
where ${\tt N}_{jk\ell},{\tt P}_{jk\ell},{\tt Q}_{jk\ell}$ are the invariantizations of the differential polynomials $N_{jk\ell},P_{jk\ell}, Q_{jk\ell}$, and hence depend superquadratically on the lifted invariants $V_J$ for $1 \leq \# J \leq j+k+\ell$.  The omitted terms in \eqref{iphijkl} are linear combinations of the Maurer--Cartan forms that do not appear among the displayed terms. We substitute this formula into the recurrence relation for the lifted differential invariant  $V_{Z^j\oZ{}^k U^\ell}$ and simplify using the normalization formulae for the Maurer--Cartan forms in Proposition \ref{prop-gen-patt}.  In particular, Remark \ref{rem} implies that, as a result, the omitted terms only produce multiples of $\mu _U$ and $\overline\mu_U$, modulo horizontal forms.  The resulting recurrence relation thus has the form
\begin{equation}
\label{V-ord-k}
\aligned
dV_{Z^j\oZ{}^k U^\ell}&\equiv (1-j-\ell) \,V_{Z^j\oZ{}^k U^\ell}\,\mu_{Z}+(1-k-\ell)\,V_{Z^j\oZ{}^k U^\ell}\,\overline\mu_{\oZ} + \bigl[-\ell\,V_{Z^{j+1}\oZ{}^k U^{\ell-1}} + {\tt A}_{jk\ell}\bigr]\,\mu_{U}
\\& \qquad \quad {}+ \bigl[-\ell\,V_{Z^{j}\oZ{}^{k+1} U^{\ell-1}}+ {\tt B}_{jk\ell}\bigr]\,\overline\mu_{U}+ \bigl[\ell\,(3-j-k-\ell)\,V_{Z^j\oZ{}^k U^{\ell-1}}+ {\tt C}_{jk\ell}\bigr]{\rm Re} \>\mu_{ZU},
\endaligned
\end{equation}
where ${\tt A}_{jk\ell}, {\tt B}_{jk\ell},{\tt C}_{jk\ell}$ are polynomials depending on the lifted invariants $V_J$.
Keep in mind that all the lifted invariants $V_J$ for $\#J < n_0$ vanish at $\bp$ except for $V_{Z\oZ}=1$, and that we can only make use of the  recurrence formulae for $V_K$ when either $j \geq 2$ and $k \geq 4$, or $j \geq 4$ and $k \geq 2$, since the others have already been used in our earlier normalizations.  Now for $K=(j, k, \ell)$ with $\#K = n_0$, all the terms in the invariantization of the infinitesimal generator coefficient $\phi ^K$  vanish at $\bp$ with the possible exceptions of a) the terms involving $V_J$ with $\#J = n_0$; these terms are necessarily linear in $V_J$ and appear explicitly in \eqref{iphijkl}, and b) those involving a power of $V_{Z\oZ}$ and no other $V_J$'s.
 Using \eqref{monomials}, let us analyze the terms involving pure powers $V_{Z\oZ}^m$ for $m\in \N$.   The coefficient of the first power is
$$j\, k\, \mcv_{Z^{j-1}\oZ{}^{k-1}U^\ell V} - k\, \mu_{Z^{j}\oZ{}^{k-1}U^\ell}- j\, \omu_{Z^{j-1}\oZ{}^{k}U^\ell},$$
where $\mcv$ is the Maurer--Cartan form introduced in \eqref{eq: substitutions}.
In view of the linear relations \eqref{eq: MC relations} and the formulae in Proposition \ref{prop-gen-patt}, all three terms are horizontal at $\bp$ unless $j=1$ or $k=1$, which is not permitted by the preceding remark.
Next the coefficient of $V_{Z\oZ}^2$ is
$$m_K\,\mcv_{Z^{j-2}\oZ{}^{k-2}U^\ell V^2} - n_K\, \mu_{Z^{j-1}\oZ{}^{k-2}U^\ell V}- n_K\, \omu_{Z^{j-2}\oZ{}^{k-1}U^\ell V},$$
where $m_K,n_K \in \N$. Again applying \eqref{eq: MC relations} and Proposition \ref{prop-gen-patt}, all three terms are horizontal at $\bp$ unless $j=k=2$, or $j=2,\ k=3$, or $j=3,\ k=2$, but these are again not permitted.
A similar analysis shows that the coefficients of any higher power $V_{Z\oZ}^m$ for $m \geq 3$ are also horizontal at $\bp$.  We conclude that, when $\#K = n_0$, only the terms explicitly displayed in \eqref{iphijkl} produce nonzero multiples of the as yet unnormalized Maurer--Cartan forms $\mu_Z, \omu_{\oZ}, {\rm Re}\> \mu_{ZU},\mu_{U},\overline\mu_U$.
A similar  argument also applies when $\#K = n_0+1$, which will be needed when we normalize ${\rm Re} \>\mu_{ZU}$.

We conclude that, when $j+k+\ell = n_0$ or $n_0+1$, the polynomials ${\tt A}_{jk\ell}, {\tt B}_{jk\ell},{\tt C}_{jk\ell}$ vanish at the point $\bp$, and formula \eqref{V-ord-k} reduces to
 \begin{equation}
\label{V-ord-k0}
\aligned
dV_{Z^j\oZ{}^k U^\ell}|_{\bp}&\equiv (1-j-\ell) \,(V_{Z^j\oZ{}^k U^\ell}\,\mu_{Z})|_{\bp}+(1-k-\ell)\,(V_{Z^j\oZ{}^k U^\ell}\,\overline\mu_{\oZ})|_{\bp} -\ell\,(V_{Z^{j+1}\oZ{}^k U^{\ell-1}}\,\mu_{U})|_{\bp}\\& \qquad \qquad {}-\ell\,(V_{Z^{j}\oZ{}^{k+1} U^{\ell-1}}\,\overline\mu_{U})|_{\bp}+ \ell\,(3-j-k-\ell)\,(V_{Z^j\oZ{}^k U^{\ell-1}}\,{\rm Re} \>\mu_{ZU})|_{\bp}.
\endaligned
\end{equation}
If $j+k+\ell = n_0$, the coefficient $V_{Z^j\oZ{}^k U^{\ell-1}}|_{\bp}$ of ${\rm Re} \>\mu_{ZU}|_{\bp}$ vanishes since it has order $n_0-1$.

Now, normalizing as in \eqref{V4a2bg}, we solve the associated phantom recurrence formulae \eqref{V4a2bg0} for
 \begin{equation}
\label{muz}
\mu_Z \equiv  {\tt A}\, \mu _U+ {\tt B}\,\overline\mu_U  + {\tt C}\,{\rm Re} \>\mu_{ZU},\qquad
\overline\mu_{\oZ} \equiv \overline {\tt B}\, \mu _U+ \overline {\tt A}\,\overline\mu_U + \overline {\tt C}\, {\rm Re} \>\mu_{ZU},
\end{equation}
where, in view of  \eqref{V-ord-k}, ${\tt A},{\tt B},{\tt C}$ are polynomials in $V_J$.  Moreover, according to \eqref{V-ord-k0} and the remark immediately following it, they have the following values at $\bp$
\begin{equation}
\label{ABC0}
\aligned
{\tt A}|_{\bp}&= \frac{\gamma\,\bigl[(1-\alpha-\gamma) \,V_{Z^{\alpha+1} \oZ{}^{\beta} U^{\gamma-1}}|_{\bp} - (1-\beta-\gamma) \,V_{Z^{\beta +1} \oZ{}^{\alpha} U^{\gamma-1}}|_{\bp}\bigr]}{\sigma \,(\alpha-\beta)\,(\alpha + \beta + 2\,\gamma -2)}, \\
{\tt B}|_{\bp}&=\frac{\gamma\,\bigl[(1-\alpha-\gamma) \,V_{Z^{\alpha} \oZ{}^{\beta+1} U^{\gamma-1}}|_{\bp} - (1-\beta-\gamma) \,V_{Z^{\beta} \oZ{}^{\alpha+1} U^{\gamma-1}}|_{\bp}\bigr]}{\sigma \,(\alpha-\beta )\,(\alpha + \beta + 2\, \gamma -2)}, \qquad {\tt C}|_{\bp}= 0.
\endaligned
\end{equation}
In particular, if $\gamma = 0$, then ${\tt A}|_{\bp}={\tt B}|_{\bp}=0$, and hence $\mu_Z|_{\bp}, \overline\mu_{\oZ}|_{\bp}$ are horizontal in this case.

The next step is to normalize ${\rm Re} \>\mu_{ZU}$, and to do this we set
\[
{\rm Re} \>V_{Z^{\alpha}\oZ{}^{\beta} U^{\gamma+1}}=0.
\]
Substituting this and our earlier normalizations into \eqref{V-ord-k} produces an equation of the form
\begin{equation}
\label{ReVabg}
\aligned
0 = d\,{\rm Re} \>V_{Z^{\alpha}\oZ{}^{\beta} U^{\gamma+1}}\equiv {\tt D}\,\mu_Z + \overline{\tt D}\,\overline\mu_{\oZ} + {\tt E}\, \mu _U + \overline{\tt E}\,\overline\mu_U  + {\tt F}\,{\rm Re} \>\mu_{ZU} \equiv  {\tt G}\,\mu _U + \overline{\tt G}\>\overline\mu_U + {\tt H}\,  {\rm Re} \>\mu_{ZU},
\endaligned
\end{equation}
where the second version is obtained by substituting the formulae \eqref{muz} for $\mu_Z ,\overline\mu_{\oZ}$ into the first, so
$${\tt G} = {\tt E} + {\tt A}\,{\tt D}+ \overline {\tt B}\,\overline {\tt D}, \qquad {\tt H} = {\tt F} + 2\,{\rm Re} \>({\tt C}\,{\tt D}).$$
In particular, according to \eqref{V-ord-k0}, \eqref{ABC0} and the assumption that $\sigma \ne 0$,
\begin{equation*}
\aligned
{\tt H}|_{\bp} = {\tt F}|_{\bp}= - \,(\gamma + 1)(\alpha + \beta + \gamma -2)\, \sigma \ne 0,
\endaligned
\end{equation*}
and hence the lifted invariant polynomial ${\tt H}$ is nonzero in some neighborhood of $\bp$.  Thus, we can use \eqref{ReVabg} to normalize the Maurer--Cartan form
\begin{equation*}
\aligned
{\rm Re} \>\mu_{ZU} \equiv  {\tt R}\, \mu _U + \overline{\tt R}\,\overline\mu_U,
\endaligned
\end{equation*}
where ${\tt R}$ is a rational function of the lifted invariants $V_J$ whose denominator does not vanish at  $\bp$.

At this stage, the only Maurer--Cartan forms that remain to be normalized are $\mu_{U}$ and $\overline\mu_U$. These can be fixed by normalizing
\[
V_{Z^{\alpha-1} \oZ{}^{\beta} U^{\gamma+1}} = V_{Z^{\beta} \oZ{}^{\alpha-1} U^{\gamma+1}}=0.
\]
Using  \eqref{V4a2bg}, \eqref{muz}, \eqref{ABC0}, the corresponding phantom recurrence formulae are reduced to ones of the form
\begin{equation}
\label{muu0}
0 = dV_{Z^{\alpha-1} \oZ{}^{\beta} U^{\gamma+1}} \equiv {\tt S}\, \mu_U + {\tt T}\,\overline\mu_U, \qquad 0 = dV_{Z^{\beta} \oZ{}^{\alpha-1} U^{\gamma+1}} \equiv \overline{\tt T}\, \mu_U + \overline{\tt S}\,\overline\mu_U,
\end{equation}
where
\begin{equation*}
{\tt S}|_{\bp}= -\,(\gamma + 1) \,\sigma,\qquad
{\tt T}|_{\bp}=- \,(\gamma + 1)\,V_{Z^{\alpha-1} \oZ{}^{\beta+1} U^{\gamma}}|_{\bp}.
\end{equation*}
Thus, provided we choose the real constant $\sigma$ in \eqref{V4a2bg} so that
\begin{equation}
\label{l2}
\sigma  \ne \pm\,\bigl|\,V_{Z^{\alpha-1} \oZ{}^{\beta+1} U^{\gamma}}|_{\bp}\,\bigr|, \qquad \sigma \ne 0,
\end{equation}
we can use \eqref{muu0} to normalize both $\mu _U,\overline\mu_U$ in a suitable neighborhood of  $\bp$.

Now that all the Maurer--Cartan forms have been normalized, the holomorphic isotropy subalgebra at the singularly umbilic point is zero-dimensional, i.e., it is discrete.  We have thus produced a completely normalized normal form there.  As in Theorem \ref{nonumbilic}, convergence of the normal form expansion follows from the convergence of the partial normal form \eqref{NF-pre}.

\begin{Theorem}
 Let $M \subset \mathbb C^2$ be a nondegenerate hypersurface that is generic singularly umbilic at $\bp$ of umbilic type $(\alpha,\beta,\gamma )$ and order $n_0=\alpha+\beta+\gamma >6$. Then, there exists a holomorphic transformation mapping $M$ to the convergent normal form
\begin{equation}
\label{semi-umb-nf}
v=z\oz+
\sum_{\scriptstyle j+k+\ell\geq n_0
\atop{\scriptstyle j\geq 2, \ k \geq 4, \ \ell \geq 0,\ostrut84\atop \scriptstyle {\rm or } \ j\geq 4, \ k \geq 2, \ \ell \geq 0}} \frac{1}{j!\, k! \,\ell!} V_{Z^j \oZ{}^k U^\ell} z^j \oz^k u^\ell
\end{equation}
 with
\[
\aligned
V_{Z^{\alpha} \oZ{}^{\beta} U^\gamma}=V_{Z^{\beta} \oZ{}^{\alpha} U^\gamma}=\sigma , \qquad
V_{Z^{\alpha-1} \oZ{}^{\beta} U^{\gamma+1}} = V_{Z^{\beta} \oZ{}^{\alpha-1} U^{\gamma+1}} = {\rm Re} \>V_{Z^{\alpha}\oZ{}^{\beta}U^{\gamma+1}} = 0,
\endaligned
\]
where $\sigma $ is a nonzero real constant satisfying \eqref{l2}. Moreover, the holomorphic isotropy group of $M$ at $\bp$ is at most discrete.
\end{Theorem}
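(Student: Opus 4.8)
The plan is to assemble the sequence of normalizations developed in the preceding analysis into a single argument that exhausts all five residual Maurer--Cartan forms listed in \eqref{MC-5}, thereby reducing the holomorphic isotropy algebra at $\bp$ to zero dimensions. The essential input is the reduced recurrence relation \eqref{V-ord-k} and, in particular, its specialization \eqref{V-ord-k0} at $\bp$, where every lifted invariant $V_J$ with $\#J<n_0$ vanishes apart from $V_{Z\oZ}=1$. I would organize the normalizations in the order dictated by the structure of \eqref{V-ord-k0}: first $\mu_Z,\overline\mu_{\oZ}$, then ${\rm Re}\,\mu_{ZU}$, and finally $\mu_U,\overline\mu_U$.

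First I would impose the pair of normalizations \eqref{V4a2bg}, setting $V_{Z^{\alpha}\oZ{}^{\beta}U^\gamma}=V_{Z^{\beta}\oZ{}^{\alpha}U^\gamma}=\sigma$ for a nonzero real constant $\sigma$ to be pinned down later. Because $\alpha>\beta$ in the generic case, the coefficients of $\mu_Z,\overline\mu_{\oZ}$ in the phantom recurrence formulae \eqref{V4a2bg0} are, at $\bp$, proportional to an invertible $2\times 2$ matrix, so one can solve for $\mu_Z,\overline\mu_{\oZ}$ as in \eqref{muz}, with coefficients taking the explicit values \eqref{ABC0} at $\bp$. I would then normalize ${\rm Re}\,\mu_{ZU}$ by setting ${\rm Re}\,V_{Z^{\alpha}\oZ{}^{\beta}U^{\gamma+1}}=0$; substituting \eqref{muz} into the corresponding recurrence relation yields \eqref{ReVabg}, and the key point is that the resulting coefficient ${\tt H}$ of ${\rm Re}\,\mu_{ZU}$ satisfies ${\tt H}|_{\bp}=-(\gamma+1)(\alpha+\beta+\gamma-2)\sigma\ne 0$, so ${\tt H}$ remains nonzero throughout a neighborhood of $\bp$ and \eqref{ReVabg} can be solved for ${\rm Re}\,\mu_{ZU}$.

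The delicate final step, and the main obstacle, is to normalize $\mu_U,\overline\mu_U$. Setting $V_{Z^{\alpha-1}\oZ{}^{\beta}U^{\gamma+1}}=V_{Z^{\beta}\oZ{}^{\alpha-1}U^{\gamma+1}}=0$ produces the linear system \eqref{muu0}, and solvability for $\mu_U,\overline\mu_U$ hinges on the nonvanishing of the determinant $|{\tt S}|^2-|{\tt T}|^2$ at $\bp$, which equals $(\gamma+1)^2\bigl(\sigma^2-\bigl|V_{Z^{\alpha-1}\oZ{}^{\beta+1}U^{\gamma}}|_{\bp}\bigr|^2\bigr)$. This is precisely where the latitude in the choice of $\sigma$ becomes indispensable: the constraint \eqref{l2} excludes exactly the finitely many real values of $\sigma$ for which this determinant vanishes, so a valid nonzero $\sigma$ always exists, and with it the system \eqref{muu0} is invertible in a neighborhood of $\bp$. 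I expect the bookkeeping needed to confirm that all lower-order contributions to \eqref{muu0} are indeed absorbed into the horizontal forms (so that only $\mu_U,\overline\mu_U$ genuinely remain to be solved for) to be the most technically demanding part, but this is already controlled by the analysis of the pure powers $V_{Z\oZ}^m$ carried out just before \eqref{V-ord-k0}.

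Once all five Maurer--Cartan forms have been normalized, the prolonged pseudo-group action is locally free on the order-$n_0$ jet of $\bp$, whence the holomorphic isotropy subalgebra at $\bp$ vanishes and the isotropy group is at most discrete. Convergence of the completed normal form \eqref{semi-umb-nf} is inherited from the convergence of the partial normal form \eqref{NF-pre}, established in \cite[Theorem 3.5]{Chern-Moser}, exactly as in the non-umbilic case treated in Theorem \ref{nonumbilic}.
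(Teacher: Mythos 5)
Your proposal is correct and follows essentially the same route as the paper's own argument: the same normalizations \eqref{V4a2bg}, ${\rm Re}\>V_{Z^{\alpha}\oZ{}^{\beta}U^{\gamma+1}}=0$, and $V_{Z^{\alpha-1}\oZ{}^{\beta}U^{\gamma+1}}=V_{Z^{\beta}\oZ{}^{\alpha-1}U^{\gamma+1}}=0$, imposed in the same order, with the same solvability certificates --- the invertible $2\times 2$ coefficient matrix of $\mu_Z,\overline\mu_{\oZ}$ at $\bp$ (nonsingular because $\alpha>\beta$), the value ${\tt H}|_{\bp}=-(\gamma+1)(\alpha+\beta+\gamma-2)\,\sigma\ne 0$, and the determinant $|{\tt S}|^2-|{\tt T}|^2$ of \eqref{muu0}, whose nonvanishing at $\bp$ is exactly what condition \eqref{l2} on $\sigma$ guarantees --- followed by the same appeal to Chern--Moser convergence of the partial normal form \eqref{NF-pre}. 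The only (cosmetic) slip is that, since ${\rm Re}\>V_{Z^{\alpha}\oZ{}^{\beta}U^{\gamma+1}}$ is a coefficient of order $n_0+1$, local freeness of the prolonged action is attained on jets of order $n_0+1$ rather than $n_0$, which is also why the paper's analysis of the pure powers $V_{Z\oZ}^m$ is explicitly extended to $\#K=n_0+1$.
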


\begin{Remark}
Although all terms of order $6 \leq n < n_0$ in the singularly umbilic normal form vanish, this only implies that the corresponding differential invariants, including the Cartan curvature, vanish at the point $\bp$, and does not imply that they vanish in a neighborhood thereof.
\end{Remark}

\subsection{Semi-circular points}\label{sec-semi-circular}

Suppose $\bp$ is a singularly umbilic point of umbilic type $(\alpha,\alpha ,\gamma)$ and semi-circular type $(\sctypez ,\sctypeoz ,\sctypeu )$.  In this case, only the real form ${\rm Re} \>\mu_Z$ is normalizable through the recurrence relations of order $n_0$. However, setting ${\rm Im} \>V_{Z^\sctypez \oZ{}^\sctypeoz U^\sctypeu}=\tau $  for suitable $0\neq \tau \in \R$, the resulting phantom recurrence formula allows us to normalize ${\rm Im} \>\mu_Z$.  The remainder of the analysis proceeds in the same fashion as at a generic singularly umbilic point, and the details are left to the reader.  Here we just state the final result.

\begin{Theorem}
 Let $M \subset \mathbb C^2$ be a nondegenerate hypersurface that is singularly umbilic at $\bp$ of umbilic type $(\alpha,\alpha ,\gamma )$, semi-circular type $(\sctypez ,\sctypeoz ,\sctypeu )$ and order $n_0=2\alpha+\gamma \geq 8$. Then, there exists a holomorphic transformation mapping $M$ to the convergent normal form  \eqref{semi-umb-nf} with
 \begin{equation}
\label{semi-circular-n}
\aligned
V_{Z^{\alpha} \oZ{}^{\alpha } U^\gamma}=\sigma , \quad
V_{Z^{\alpha-1} \oZ{}^{\beta} U^{\gamma+1}} = V_{Z^{\beta} \oZ{}^{\alpha-1} U^{\gamma+1}} = {\rm Re} \>V_{Z^{\alpha}\oZ{}^{\beta}U^{\gamma+1}} = 0, \quad {\rm Im} \>V_{Z^\lambda\oZ{}^\mu U^\nu}=\tau ,
\endaligned
\end{equation}
where $\sigma ,\tau $ are nonzero real constants. Moreover, the holomorphic isotropy group of $M$ at $\bp$ is at most discrete.
\end{Theorem}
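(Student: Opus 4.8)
The plan is to mirror the normalization scheme of the generic case in Section~\ref{sec-generic-semi-umb}, the essential difference being that the umbilic type $(\alpha,\alpha,\gamma)$ supplies only a \emph{single} real phantom equation at order $n_0$, so that a second, higher-order equation of semi-circular type is needed to finish normalizing $\mu_Z$. Since $\alpha=\beta$, the lowest-order nonvanishing invariant $V_{Z^{\alpha}\oZ{}^{\alpha}U^\gamma}$ is real, and I would impose the single normalization $V_{Z^{\alpha}\oZ{}^{\alpha}U^\gamma}=\sigma$ for a nonzero real constant $\sigma$. Specializing the recurrence formula \eqref{V-ord-k} to $j=k=\alpha$, $\ell=\gamma$ and using the conjugation relation $\overline\mu_{\oZ}=\overline{\mu_Z}$, the coefficients of $\mu_Z$ and $\overline\mu_{\oZ}$ coincide, both equal to $(1-\alpha-\gamma)\,V_{Z^\alpha\oZ{}^\alpha U^\gamma}$; hence the phantom equation $dV_{Z^\alpha\oZ{}^\alpha U^\gamma}=0$ involves only the real combination $\mu_Z+\overline\mu_{\oZ}=2\,{\rm Re}\>\mu_Z$. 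The same ``horizontal at $\bp$'' analysis as in the generic case applies, since every non-circular invariant of order $n_0$ vanishes at $\bp$ by semi-circularity condition (a), while the order $n_0-1$ invariant multiplying ${\rm Re}\>\mu_{ZU}$ vanishes as well; the surviving coefficient of ${\rm Re}\>\mu_Z$ is $2(1-\alpha-\gamma)\sigma\ne0$, so one solves for ${\rm Re}\>\mu_Z$ as a lifted-invariant combination of $\mu_U,\overline\mu_U,{\rm Re}\>\mu_{ZU}$ in a neighborhood of $\bp$.

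To normalize the remaining real form ${\rm Im}\>\mu_Z$, I would pass to the semi-circular invariant $V_{Z^{\sctypez}\oZ{}^{\sctypeoz}U^{\sctypeu}}$ of order $n_1=\sctypez+\sctypeoz+\sctypeu>n_0$, where $\sctypez\ne\sctypeoz$. Writing $\mu_Z={\rm Re}\>\mu_Z+\i\,{\rm Im}\>\mu_Z$ and $\overline\mu_{\oZ}={\rm Re}\>\mu_Z-\i\,{\rm Im}\>\mu_Z$ in the leading terms of \eqref{V-ord-k}, the coefficient of ${\rm Im}\>\mu_Z$ is precisely $\i(\sctypeoz-\sctypez)\,V_{Z^{\sctypez}\oZ{}^{\sctypeoz}U^{\sctypeu}}$. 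The key structural point is Remark~\ref{rem}: every Maurer--Cartan form occurring in the correction terms, other than $\mcu_U=2\,{\rm Re}\>\mu_Z$ and $\mcu_{UU}=2\,{\rm Re}\>\mu_{ZU}$, reduces modulo horizontal forms to a combination of $\mu_U,\overline\mu_U$ alone, so the antisymmetric combination ${\rm Im}\>\mu_Z=\tfrac1{2\i}(\mu_Z-\overline\mu_{\oZ})$ enters the recurrence for $V_{Z^{\sctypez}\oZ{}^{\sctypeoz}U^{\sctypeu}}$ through this single explicit term and nowhere else. Setting ${\rm Im}\>V_{Z^{\sctypez}\oZ{}^{\sctypeoz}U^{\sctypeu}}=\tau$ and computing $d({\rm Im}\>V_{Z^{\sctypez}\oZ{}^{\sctypeoz}U^{\sctypeu}})=0$, the coefficient of ${\rm Im}\>\mu_Z$ becomes $(\sctypeoz-\sctypez)\,{\rm Re}\>V_{Z^{\sctypez}\oZ{}^{\sctypeoz}U^{\sctypeu}}|_{\bp}$; since $\sctypez\ne\sctypeoz$ and $V_{Z^{\sctypez}\oZ{}^{\sctypeoz}U^{\sctypeu}}|_{\bp}\ne0$, at least one of its real and imaginary parts is nonzero at $\bp$, and one normalizes the vanishing part (the imaginary one when ${\rm Re}\>V|_{\bp}\ne0$, or else ${\rm Re}\>V=\tau$ via the conjugate equation) to obtain a nonvanishing coefficient, thereby solving for ${\rm Im}\>\mu_Z$ near $\bp$.

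With both $\mu_Z$ and $\overline\mu_{\oZ}$ normalized, the remaining steps run exactly as in Section~\ref{sec-generic-semi-umb}: one normalizes ${\rm Re}\>\mu_{ZU}$ by setting ${\rm Re}\>V_{Z^{\alpha}\oZ{}^{\alpha}U^{\gamma+1}}=0$, the relevant coefficient being the analogue of $\mathtt{H}|_{\bp}$ in \eqref{ReVabg}, namely $-(\gamma+1)(2\alpha+\gamma-2)\sigma\ne0$; and then one normalizes $\mu_U,\overline\mu_U$ by setting $V_{Z^{\alpha-1}\oZ{}^{\alpha}U^{\gamma+1}}=V_{Z^{\alpha}\oZ{}^{\alpha-1}U^{\gamma+1}}=0$, where a generic choice of $\sigma$ in the spirit of \eqref{l2} guarantees solvability of the resulting $2\times2$ system. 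Once all five residual Maurer--Cartan forms of \eqref{MC-5} are normalized, the holomorphic isotropy subalgebra at $\bp$ is zero-dimensional, and convergence of the resulting normal form follows, exactly as in Theorem~\ref{nonumbilic}, from the Chern--Moser convergence of the partial normal form \eqref{NF-pre}.

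The main obstacle I anticipate is the bookkeeping in the second paragraph. Unlike the order $n_0$ and $n_0+1$ relations treated in the generic case, the semi-circular invariant sits at the strictly larger order $n_1$, so intermediate circular invariants $V_{Z^m\oZ{}^m U^p}$ with $n_0<2m+p<n_1$ may be nonzero at $\bp$ and will appear in the correction polynomials $\mathtt{A},\mathtt{B},\mathtt{C}$. One must verify carefully that these intermediate contributions feed \emph{only} into the $\mu_U,\overline\mu_U,{\rm Re}\>\mu_{ZU}$ coefficients and never into the ${\rm Im}\>\mu_Z$ coefficient. This is exactly the content of Remark~\ref{rem}, but rigorously invoking it at order $n_1$ requires re-examining the pure-power terms $V_{Z\oZ}^m$ from \eqref{monomials}, as was done for $\#K=n_0$ in the generic case, and confirming that the reductions via \eqref{eq: MC relations} and Proposition~\ref{prop-gen-patt} again produce horizontal forms or multiples of $\mu_U,\overline\mu_U$, rather than any ${\rm Im}\>\mu_Z$ contribution. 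Establishing this cleanly, without being drawn into the explicit higher-order recurrence formulae, is where the care is genuinely needed.
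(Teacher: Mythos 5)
Your proposal is correct and follows essentially the same route as the paper, whose own treatment of this case is only a sketch deferring to the generic analysis: normalize $V_{Z^{\alpha}\oZ{}^{\alpha}U^{\gamma}}=\sigma$ to fix ${\rm Re}\>\mu_Z$ (only the real part survives since $j=k$ kills the ${\rm Im}\>\mu_Z$ coefficient), use the semi-circular phantom equation ${\rm Im}\>V_{Z^{\sctypez}\oZ{}^{\sctypeoz}U^{\sctypeu}}=\tau$ together with Remark \ref{rem} to fix ${\rm Im}\>\mu_Z$, then finish as in Section \ref{sec-generic-semi-umb}; your worry about intermediate circular invariants at orders between $n_0$ and the semi-circular order is correctly dissolved by the fact that Proposition \ref{prop-gen-patt} and Remark \ref{rem} hold identically on the bundle with lifted-invariant coefficients, not merely at $\bp$. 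Two small refinements: the case split ending your second paragraph is unnecessary (and slightly misconceived, since the value of $V_{Z^{\sctypez}\oZ{}^{\sctypeoz}U^{\sctypeu}}$ at $\bp$ is only defined modulo the residual rotation generated by ${\rm Im}\>\mu_Z$) --- because the modulus $|V_{Z^{\sctypez}\oZ{}^{\sctypeoz}U^{\sctypeu}}|$ is rotation-invariant, choosing $0<|\tau|<|V_{Z^{\sctypez}\oZ{}^{\sctypeoz}U^{\sctypeu}}(\bp)|$ forces ${\rm Re}\>V_{Z^{\sctypez}\oZ{}^{\sctypeoz}U^{\sctypeu}}\ne 0$ on the cross-section (this is what the paper's ``suitable $0\ne\tau$'' encodes), so the normalization can always be taken in the exact form \eqref{semi-circular-n}; and no analogue of \eqref{l2} is actually needed for the final $2\times 2$ system, since here ${\tt T}|_{\bp}=-(\gamma+1)\,V_{Z^{\alpha-1}\oZ{}^{\alpha+1}U^{\gamma}}|_{\bp}=0$ automatically by semi-circularity condition a) (its conjugate would otherwise furnish a non-circular umbilic type of order $n_0$), which is why the theorem requires only $\sigma\ne 0$.
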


\subsection{Circular points}
\label{sec-circular}

Finally, let us consider the normal form of a circular singularly umbilic hypersurface, of umbilic type $(\alpha, \alpha, \gamma)$ with $\alpha \geq 4$, and order $n_0=2\alpha+\gamma \geq 8$.

Looking {\it ab initio} at the normalizations made in the generic case, one finds that it is possible to normalize the four Maurer--Cartan forms
${\rm Re} \>\mu_Z, \ {\rm Re} \>\mu_{ZU}, \ \mu_U, \ \overline\mu_U$, by setting
\begin{equation*}
V_{Z^{\alpha}\oZ{}^{\alpha}U^{\gamma}}=1 \qquad {\rm and} \qquad V_{Z^{\alpha}\oZ{}^{\alpha}U^{\gamma+1}}=V_{Z^{\alpha-1}\oZ{}^{\alpha}U^{\gamma+1}}=V_{Z^{\alpha}\oZ{}^{\alpha-1}U^{\gamma+1}}=0.
\end{equation*}
However, the residual rotational isotropy of the circular normal form means that the remaining  Maurer--Cartan form ${\rm Im} \>\mu_{Z}$ cannot be normalized, \cite{Valiquette-SIGMA}.

 \begin{Theorem}
 Let $M$ be a circular hypersurface of $\mathbb C^2$ singularly umbilic at $\bp$ of umbilic type $(\alpha, \alpha, \gamma)$ and order $n_0=2\alpha+\gamma\geq 8$. Then there exists a holomorphic transformation mapping $M$ to the convergent normal form
\begin{equation}
\label{circnf}
 v=z\oz+\frac{1}{(\alpha!)^2\,\gamma !}\,z^{\alpha}\oz^{\alpha}u^\gamma+\sum_{2k+\ell\geq n_0 \atop{k\geq \alpha+1}}\,\frac{1}{(k!)^2 \ell!}\, V_{Z^k \oZ{}^k U^\ell}\, (z\oz)^k u^\ell
\end{equation}
with $V_{Z^{\alpha}\oZ{}^{\alpha}U^{\gamma+1}}=0$. The holomorphic isotropy group of $M$ at  $\bp$ is one-dimensional and, when mapped to the origin, is generated by the rotational vector field $\i z\, \partial _z$.  Moreover, the circular normal form \eqref{circnf} is unique.
 \end{Theorem}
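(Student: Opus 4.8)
The plan is to reuse, essentially verbatim, the normalization scheme developed for a generic singularly umbilic point in Section \ref{sec-generic-semi-umb}, specialized to the diagonal umbilic type $(\alpha,\alpha,\gamma)$, while tracking carefully which of the five residual Maurer--Cartan forms \eqref{MC-5} can be solved for. Because $\alpha=\beta$, the top-order invariant $V_{Z^\alpha\oZ{}^\alpha U^\gamma}$ is real, so in place of the two complex phantom conditions \eqref{V4a2bg0} we have a single real one. First I would normalize $V_{Z^\alpha\oZ{}^\alpha U^\gamma}=1$ and read off its phantom recurrence relation from \eqref{V-ord-k0}: with $j=k=\alpha$, $\ell=\gamma$, every term carrying $\mu_U$, $\overline\mu_U$ or ${\rm Re}\,\mu_{ZU}$ has as coefficient an invariant that is either off-diagonal of order $n_0$ or diagonal of order $n_0-1$, hence vanishing at $\bp$ by circularity. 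What survives is $(1-\alpha-\gamma)(\mu_Z+\overline\mu_{\oZ})|_{\bp}=2(1-\alpha-\gamma)\,{\rm Re}\,\mu_Z|_{\bp}$, and since $\alpha+\gamma\geq 4>1$ this solves for ${\rm Re}\,\mu_Z$, the coefficient remaining nonzero in a neighborhood by continuity. (No constraint analogous to \eqref{l2} on the normalization constant is needed here, which is why the value can be taken to be exactly $1$.)

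Next I would pass to order $n_0+1$ and impose $V_{Z^\alpha\oZ{}^\alpha U^{\gamma+1}}=V_{Z^{\alpha-1}\oZ{}^\alpha U^{\gamma+1}}=V_{Z^\alpha\oZ{}^{\alpha-1}U^{\gamma+1}}=0$. The diagonal condition $V_{Z^\alpha\oZ{}^\alpha U^{\gamma+1}}=0$ gives, via \eqref{V-ord-k0}, the coefficient $\ell(3-j-k-\ell)V_{Z^\alpha\oZ{}^\alpha U^\gamma}|_{\bp}=(\gamma+1)(2-n_0)\neq 0$ for ${\rm Re}\,\mu_{ZU}$, while all competing terms carry an off-diagonal or lower-order invariant factor and vanish at $\bp$; this solves for ${\rm Re}\,\mu_{ZU}$. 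The subtle point is the two off-diagonal conditions: although $V_{Z^{\alpha-1}\oZ{}^\alpha U^{\gamma+1}}$ already vanishes at $\bp$ by circularity, its recurrence relation is not degenerate, since in \eqref{V-ord-k0} its $\mu_U$-coefficient is $-\ell\,V_{Z^\alpha\oZ{}^\alpha U^\gamma}|_{\bp}=-(\gamma+1)\neq 0$, coming precisely from the diagonal invariant just fixed to $1$; all its $\mu_Z$, $\overline\mu_{\oZ}$ and ${\rm Re}\,\mu_{ZU}$ coefficients carry a vanishing invariant factor. This equation and its conjugate thus form a diagonal linear system solving for $\mu_U$ and $\overline\mu_U$. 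One checks at each stage that ${\rm Im}\,\mu_Z$ does not enter, because its would-be coefficient always carries a factor that vanishes at $\bp$.

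The heart of the argument is to show that ${\rm Im}\,\mu_Z$ genuinely cannot be normalized. Rewriting the $\mu_Z,\overline\mu_{\oZ}$ part of the general recurrence \eqref{V-ord-k} in terms of ${\rm Re}\,\mu_Z$ and ${\rm Im}\,\mu_Z$, the coefficient of ${\rm Im}\,\mu_Z$ in $dV_{Z^j\oZ{}^k U^\ell}$ equals $\i\,(k-j)\,V_{Z^j\oZ{}^k U^\ell}$, and these are its only occurrences, since all remaining terms of \eqref{V-ord-k} involve only $\mu_U$, $\overline\mu_U$, and ${\rm Re}\,\mu_{ZU}$. At $\bp$ this coefficient vanishes identically: it is zero when $j=k$, and when $j\neq k$ one has $V_{Z^j\oZ{}^k U^\ell}|_{\bp}=0$ by the definition of a circular point. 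Hence no phantom recurrence relation at $\bp$ can be solved for ${\rm Im}\,\mu_Z$, exactly one Maurer--Cartan form survives, and the holomorphic isotropy subalgebra at $\bp$ is one-dimensional. Its generator is the infinitesimal transformation dual to ${\rm Im}\,\mu_Z$, namely the field with $\xi=\i z$ (so that $\xi_z=\i$, ${\rm Im}\,\xi_z=1$), which is $\vv_5=\i z\,\partial_z$ of \eqref{G8}; it fixes the origin and preserves the diagonal expansion \eqref{circularnf}.

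Finally, uniqueness and convergence. Since all off-diagonal coefficients vanish at $\bp$, the completed normal form contains only diagonal monomials $(z\oz)^k u^\ell$, yielding \eqref{circnf} with $V_{Z^\alpha\oZ{}^\alpha U^{\gamma+1}}=0$; convergence follows, as in Theorem \ref{nonumbilic}, from Chern and Moser's convergence of the partial normal form \eqref{NF-pre}. For uniqueness, any two holomorphic maps carrying $M$ to circular normal form differ by an element of the residual isotropy, i.e.\ a rotation $z\mapsto e^{\i t}z$ (in particular $z\mapsto -z$ is here the value $t=\pi$ rather than an extra discrete ambiguity); such a rotation multiplies $V_{Z^j\oZ{}^k U^\ell}$ by $e^{\i(j-k)t}$ and so fixes every diagonal coefficient, leaving \eqref{circnf} unchanged. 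I expect the main obstacle to be the bookkeeping of the second paragraph: one must confirm that the off-diagonal phantom relations remain solvable for $\mu_U,\overline\mu_U$ even though the invariants themselves vanish at $\bp$, which rests on isolating the single surviving coefficient supplied by the normalized diagonal invariant and verifying that every competing Maurer--Cartan contribution carries a factor vanishing at the circular point.
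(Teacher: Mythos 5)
Your proposal is correct and follows essentially the same route as the paper, which simply asserts that the generic-case normalizations of Section \ref{sec-generic-semi-umb} specialize to normalize ${\rm Re}\>\mu_Z$, ${\rm Re}\>\mu_{ZU}$, $\mu_U$, $\overline\mu_U$ via $V_{Z^{\alpha}\oZ{}^{\alpha}U^{\gamma}}=1$ and $V_{Z^{\alpha}\oZ{}^{\alpha}U^{\gamma+1}}=V_{Z^{\alpha-1}\oZ{}^{\alpha}U^{\gamma+1}}=V_{Z^{\alpha}\oZ{}^{\alpha-1}U^{\gamma+1}}=0$, while the residual rotational symmetry obstructs normalizing ${\rm Im}\>\mu_Z$. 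Your verification through \eqref{V-ord-k0} of the nonzero pivot coefficients $2(1-\alpha-\gamma)$, $(\gamma+1)(2-n_0)$, $-(\gamma+1)$, of the absence of the analogue of condition \eqref{l2} (since $V_{Z^{\alpha-1}\oZ{}^{\alpha+1}U^{\gamma}}|_{\bp}=0$ by circularity), and of the vanishing coefficient $\i\,(k-j)\,V_{Z^j\oZ{}^kU^\ell}$ of ${\rm Im}\>\mu_Z$ supplies exactly the details the paper leaves implicit, and your rotation-invariance argument for uniqueness matches the paper's reasoning.
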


\subsection{Symmetry and Isotropy}
\label{sec-iso}

Before concluding this paper, let us present a classification of nondegenerate hypersurfaces of $\mathbb C^2$ according to their (isotropy) group of holomorphic automorphisms. This classification follows immediately from the above considerations and can be deduced, in part, from  \cite{Beloshapka-80}.

\begin{Theorem}
Let $M$ be a hypersurface of $\mathbb C^2$ nondegenerate at a point $\bp$. Let $\frak G_\bp$ be the  holomorphic isotropy group at $\bp$. Then one of the following possibilities will occur
\begin{itemize}
  \item[$\bullet$] ${\rm dim}\;\frak G_\bp=5$ \ then $M$ is locally holomorphically equivalent to the Heisenberg sphere $\mathbb H$.
  \item[$\bullet$] ${\rm dim}\;\frak G_\bp=1$ \  then $M$ is a circular hypersurface singularly umbilic at $\bp$.
  \item[$\bullet$] ${\rm dim}\;\frak G_\bp=0$ \ then $M$ is non-umbilic or semi-circular or generic singularly umbilic at $\bp$.
\end{itemize}
\end{Theorem}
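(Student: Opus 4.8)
The plan is to assemble the case-by-case normalization counts carried out in the preceding sections and reorganize them according to the dimension of the holomorphic isotropy rather than the geometric type of the point. The essential bookkeeping principle is that, in the equivariant moving frame framework, the dimension of the isotropy subalgebra at $\bp$ equals the number of residual (real) Maurer--Cartan forms that cannot be normalized. By Section \ref{sec-NF-general-comp}, after performing all normalizations through order six, exactly five such forms survive, namely those in \eqref{MC-5}: $\mu_U, \overline\mu_U, \mu_Z, \overline\mu_{\oZ}, {\rm Re}\>\mu_{ZU}$. Since $\overline\mu_{\oZ}=\overline{\mu_Z}$ and $\overline\mu_U=\overline{\mu_U}$ by the conjugation relations, these account for $2+2+1=5$ real dimensions, so $\dim\,\frak G_\bp\leq 5$, and its exact value is the number of these five real degrees of freedom that remain after the higher-order analysis.

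First I would dispose of the two extreme cases. If the Cartan curvature $\CMR=V_{Z^4\oZ{}^2}/6$ does not vanish at $\bp$, then $\bp$ is non-umbilic and Theorem \ref{nonumbilic} normalizes all five residual forms, giving $\dim\,\frak G_\bp=0$. At the opposite extreme, if $M$ is umbilic in a neighborhood of $\bp$, the locally umbilic analysis of Section \ref{umb} shows that every higher-order invariant $V_J$ vanishes identically, so none of the five forms in \eqref{MC-5} can be normalized; this forces $\dim\,\frak G_\bp=5$, and, as established there, $M$ is locally biholomorphic to the Heisenberg sphere $\mathbb H$.

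Next I would treat the singularly umbilic regime. By Definition \ref{singularly-umbilic-def} and the accompanying trichotomy, such a point is generic, semi-circular, or circular. The generic and semi-circular normal form theorems of Sections \ref{sec-generic-semi-umb} and \ref{sec-semi-circular} each normalize all five residual forms --- in the semi-circular case both ${\rm Re}\>\mu_Z$ and ${\rm Im}\>\mu_Z$ are fixed, the latter using the constant $\tau$ --- yielding $\dim\,\frak G_\bp=0$ in both. The circular theorem of Section \ref{sec-circular} instead normalizes only the four forms ${\rm Re}\>\mu_Z,{\rm Re}\>\mu_{ZU},\mu_U,\overline\mu_U$, leaving ${\rm Im}\>\mu_Z$ irreducible because of the residual rotational automorphism $z\mapsto e^{\i t}z$; hence $\dim\,\frak G_\bp=1$, with isotropy generated by $\i z\,\partial_z$.

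Finally I would reorganize these outcomes by isotropy dimension. The computed values are $0,5,0,0,1$ for the non-umbilic, locally umbilic, generic, semi-circular, and circular types respectively, so the only dimensions realized are $0,1,5$; collecting the types attaining each value produces the three bullet points as stated. The only point genuinely requiring care is that the five geometric classes are exhaustive and mutually exclusive, which is guaranteed by the umbilic/non-umbilic dichotomy, the further locally-umbilic/singularly-umbilic split, and Definition \ref{singularly-umbilic-def}. No individual step poses a real obstacle, since the theorem is a direct synthesis of the residual Maurer--Cartan form counts already established; the work is entirely in verifying that the five cases exhaust the nondegenerate points and matching each normalization count to the correct dimension.
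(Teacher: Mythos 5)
Your proposal is correct and follows essentially the same route as the paper, which offers no separate proof but states that the classification ``follows immediately from the above considerations'' --- precisely the synthesis you carry out, matching the residual Maurer--Cartan form counts ($5$, $0$, $0$, $0$, $1$ for the locally umbilic, non-umbilic, generic, semi-circular, and circular cases) to the isotropy dimension at $\bp$. Your added care on exhaustiveness of the five classes and on the locally umbilic case (where the equivalence to $\mathbb H$, with isotropy spanned by $\vv_1,\ldots,\vv_5$, confirms that the dimension is exactly $5$ rather than merely $\leq 5$) is consistent with what the paper establishes in Sections \ref{umb}--\ref{sec-circular}.
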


Similarly, using the argument outlined in the introduction, we deduce the following result on the local symmetry group of a nondegenerate hypersurface.

\begin{Theorem}
Let $M$ be a nondegenerate hypersurface.  Then the local symmetry group of $M$ near a point $\bp$ has dimension $8$ if and only if $M$ is locally holomorphically equivalent to the Heisenberg sphere $\mathbb H$.  Otherwise, the dimension of the local symmetry group is $0,1,2,3$ depending upon whether the number of functionally independent differential invariants on a neighborhood of $\bp$ is $3,2,1,0$, respectively. In the maximally symmetric nonsingular case having a three-dimensional local symmetry group, $M$ is locally equivalent to an orbit of a three-dimensional subgroup of the holomorphic pseudo-group.
\end{Theorem}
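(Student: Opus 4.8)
The plan is to derive the theorem from the general correspondence between differential invariants and local symmetries recalled in the introduction, together with the isotropy classification established in Sections \ref{sec-NF-general-comp}--\ref{sec-circular}. The key tool is the cited result (\cite[Theorem 14.26]{Olver-1995}, \cite[Theorem 5.17]{Olver-Fels-99}, \cite{Olver-sg}): at a nonsingular point, the number $k$ of functionally independent differential invariants equals the codimension of the local symmetry groupoid orbit. Since $\dim M = 3$ and the isotropy at a nonsingular point is discrete, the orbit dimension equals the dimension of the local symmetry group, whence $\dim(\text{local symmetry group}) = 3 - k$ wherever a nonsingular point is available. I would combine this with the dimension identity $\dim G = (\text{orbit dim}) + (\text{isotropy dim})$.

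First I would establish the universal upper bound and the Heisenberg dichotomy. Using the identity above, the bound $\text{orbit dim} \leq \dim M = 3$, and the bound $\leq 5$ on the isotropy subgroup obtained at the end of Section \ref{sec-NF-general-comp}, every local symmetry group has dimension at most $8$, with equality forcing the orbit to be all of $M$ and the isotropy to be exactly five-dimensional. By the umbilic analysis of Section \ref{umb}, five-dimensional isotropy occurs precisely at fully umbilic points, where $M$ is locally the Heisenberg sphere; this yields $\dim = 8 \Rightarrow M \simeq \mathbb H$. Conversely, if $M \simeq \mathbb H$ locally, then it inherits the eight independent holomorphic symmetries $\vv_1,\dots,\vv_8$ of \eqref{G8} spanning $\mathfrak{sl}(3,\R)$, so its symmetry group is at least, and hence exactly, eight-dimensional. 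This proves $\dim = 8 \iff M \simeq \mathbb H$.

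Next I would handle the non-Heisenberg cases. Since every totally singular nondegenerate hypersurface is equivalent to $\mathbb H$, a non-Heisenberg $M$ admits nonsingular points in every neighborhood: the only singular points on it are the circular singularly umbilic ones (isotropy one-dimensional), and by definition each such point is surrounded by non-umbilic, hence nonsingular, points. As the dimension of the local symmetry group is locally constant on the regular part, I would evaluate it at a nearby nonsingular point via the codimension formula, obtaining $\dim = 3 - k \in \{0,1,2,3\}$ for $k = 3,2,1,0$, exactly as asserted. The circular case lands in the $k = 2$, $\dim = 1$ row: its residual rotational isotropy $\i z\,\partial_z$ acts with generically one-dimensional orbits, so there are $k = 2$ functionally independent invariants on the neighborhood, consistent with $\dim = (\text{orbit }0) + (\text{isotropy }1) = 1$ at $\bp$ itself.

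Finally, for the maximally symmetric nonsingular subcase $k = 0$, all differential invariants are constant, and the principle recalled in the introduction shows that a nonsingular submanifold with all constant invariants locally coincides with a single orbit of a symmetry group of dimension $\dim M = 3$, realized as a three-dimensional sub-pseudo-group of $\G$; this is precisely the situation analyzed after \eqref{2524}, where constancy of $\CMJ,\CMoJ,\CMK$ forces constancy of every higher invariant. The main obstacle is the bookkeeping at singular points, where the codimension formula fails pointwise: the Heisenberg case must be isolated through the isotropy bound rather than the invariant count, and the circular case must be reconciled by passing to nearby non-umbilic points, the delicate verification being that the semi-local symmetry group dimension is correctly recovered by $\text{orbit} + \text{isotropy}$ in these degenerate configurations.
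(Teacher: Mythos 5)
Your proposal is correct and takes essentially the same route as the paper's (three-sentence) proof: the Heisenberg case is isolated via the totally singular/umbilic analysis, dimensions $4$--$7$ are excluded because a non-Heisenberg hypersurface has nonsingular points (with discrete isotropy) arbitrarily close to any point of the $3$-dimensional $M$, and the identification $\dim = 3-k$ with the invariant count is exactly the cited result \cite[Theorem 5.17]{Olver-Fels-99}, \cite[Theorem 14.26]{Olver-1995}, with the constant-invariant case handled by the orbit principle recalled in the introduction. One minor caveat: your side remark that the circular case necessarily has $k=2$ does not follow from the rotational symmetry alone (which only yields $k\le 2$), but since the theorem ties the dimension to $k$ rather than to the circular classification, this over-claim does not affect the argument.
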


\proof
If $M$ is totally singular, then it must be umbilic and hence locally congruent to the Heisenberg sphere.  Otherwise, the dimension of the symmetry group cannot be $\geq 4$, since, given that the dimension of $M$ is $3$, this would imply that every point $\bp \in M$ would have an isotropy group of dimension $\geq 1$, and hence be umbilic, and we are back to the previous case.  The connection between the dimension of the symmetry group and the number of functionally independent differential invariants at a nonsingular point follows from \cite[Theorem 5.17]{Olver-Fels-99}; see also \cite[Theorem 14.26]{Olver-1995}.
\endproof

The eight-dimensional case corresponds to the Heisenberg (and ordinary) sphere.  Cartan, \cite{Cartan-1932}, classified the homogeneous hypersurfaces possessing a three-dimensional symmetry group.  According to \cite[p.\ 656]{Beloshapka-Kossovskiy}, there are six classes; the \emph{tubular hypersurfaces}
\begin{itemize}
  \item[(t1) ]  $v = y^\lambda $ \ for \ $y > 0$, \ $\abs \lambda \geq 1$, \ $\lambda \ne 1,2$;
  \item[(t2) ]  $v = y \log y $ \ for \ $y > 0$;
  \item[(t3) ]  $r = e ^{a\, \varphi }$ \ for \ $a \geq 0$, \ where \ $y + \i v = r\, e ^{\i \varphi }$;
\end{itemize}
and the \emph{projective hypersurfaces}
\begin{itemize}
  \item[(p1) ]  $\hphantom-1 + \abs z^2 + \abs w^2 = a \, \abs{1 + z^2 + w^2}\hphantom-$ \ for \ $a > 1$;
  \item[(p2) ]  $\hphantom-1 + \abs z^2 - \abs w^2 = a \, \abs{1 + z^2 -w^2}\hphantom-$ \ for \ $a > 1$;
  \item[(p3) ]  $-1 + \abs z^2 + \abs w^2 = a \, \abs{-1 + z^2 + w^2}$ \ for \ $0 < \abs a < 1$.
\end{itemize}
We note that the tubular hypersurface of Type (t3) can be rewritten in the alternative implicit form
$$(y + \i v)^{1 - \i a}(y - \i v)^{1 + \i a} = 1. $$
The associated symmetry groups are as follows.  The tubular hypersurfaces all have nilpotent symmetry groups.  For the first, the symmetry algebra is spanned by
$${\rm(t1)} \hskip 40pt \partial_z, \qquad\partial_w,\qquad z\,\partial_z+\lambda \, w\,\partial_w.\hskip 100pt$$
The second tubular hypersurface has symmetry algebra
$${\rm(t2)} \hskip 40pt  \partial_z, \qquad\partial_w,\qquad z\,\partial_z+(z+ w)\,\partial_w.\hskip 80pt$$
As for the third, we have
$$ {\rm(t3)} \hskip 40pt \partial_z, \qquad\partial_w,\qquad (a\,z + w)\,\partial_z+(-z+ a\,w)\,\partial_w.\hskip 28pt $$
The symmetry algebra of the first class of  projective hypersurfaces is spanned by
$${\rm(p1)} \hskip 22pt -w\,\partial_z+z\,\partial_w,\qquad (1+z^2)\,\partial_z+z\,w \, \partial_w,\qquad z\,w\,\partial_z+(1+w^2)\,\partial_w.\hskip 56pt $$
We note that these vector fields span the complexification of the infinitesimal generators of the projective action of the rotation group  ${\rm SO}(3)$ on the projective plane $\mathbb{RP}^2$.  The other two projective hypersurfaces have similar symmetry generators
$${{\rm(p2)} \hskip 35pt  w\,\partial_z+z\,\partial_w,\qquad \ \  (1-z^2)\,\partial_z-z\,w \, \partial_w,\qquad \ -\,z\,w\,\partial_z+(1-w^2)\,\partial_w,\hskip4pt\atop
{\rm(p3)} \hskip 22pt -w\,\partial_z+z\,\partial_w,\qquad (-1+z^2)\,\partial_z+z\,w \, \partial_w,\qquad \ \ z\,w\,\partial_z+(-1+w^2)\,\partial_w.}\hskip 30pt $$
Because the tubular and projective hypersurfaces admit three-dimensional symmetry groups, the differential invariants $\CMJ,\CMK,\CML$ (and all higher order differential invariants) are all constant on them.

Turning to two-dimensional symmetry groups, away from singular points there are, up to holomorphic equivalence,  two locally transitive possibilities,  \cite{Olver-1995}.  The first is the abelian translation group whose Lie algebra is spanned by  $\partial _z, \partial _w$.  A translationally invariant hypersurface has the form
\begin{equation}
\label{vfy}
v = f(y) .
\end{equation}
We note that the Heisenberg sphere $v = z \oz$ is locally of the form \eqref{vfy}; replacing $z \mapsto e^{\i z}$, it becomes $v = e^{\i(z - \oz)}= e^{-2y}$.
We claim that, for a suitably generic function $f$, the dimension of the symmetry group is exactly $2$.  To see this, excluding the Heisenberg sphere, we need to show that a generic $f$ cannot admit a three-dimensional symmetry algebra.
A short computation shows that any Lie algebra containing the abelian translation algebra has a third generator of one of the two forms
$$e^{\alpha \, z + \beta \, w} (\gamma \, \partial _ z + \delta \, \partial _ w), \qquad (\alpha \, z + \beta \, w) \, \partial _ z + (\gamma \, z + \delta \,w)\, \partial _ w,$$
for $\alpha, \beta, \gamma, \delta \in \C$.

In the first case, we have $(\alpha , \beta), (\gamma, \delta ) \ne (0,0)$; moreover, since the symmetry algebra of the hypersurface must be real, $\alpha, \beta \in \R$.
The hypersurface \eqref{vfy} admits this generator if and only if $f(y)$ satisfies
\[
f'(y) = -\,\frac{{\rm Im} \> \delta \,e^{\i(\alpha \, y + \beta \, f(y))}}{{\rm Im} \> \gamma \,e^{\i(\alpha \, y + \beta \, f(y))}},
\]
which imposes severe restrictions on the form of $f(y)$.  In the second case, we can apply an invertible linear transformation with a  real\footnote{The transformation must be real so as not to complexify the translation generators.} coefficient matrix to $(z,w)$ to place the coefficient matrix of the vector field  into real canonical form, \cite[\S 3.4.1]{Horn-Johnson}, and hence there are three subcases
$$a \, z \, \partial _ z + b\,w\, \partial _ w,\qquad a \, z \, \partial _ z + (z + a\,w)\, \partial _ w,\qquad (a\,z + w)\,\partial_z+(-z+ a\,w)\,\partial_w,$$
where, by the reality of the symmetry algebra, $a,b \in \R$. It is easily shown that the corresponding hypersurfaces are, respectively, equivalent under a linear transformation in $w$ to Cartan's three tubular hypersurfaces (in the first two cases, there is no invariant hypersurface of the form \eqref{vfy} when $a=0$).

Any locally transitive non-abelian two-dimensional transformation group is equivalent to the one whose Lie algebra is spanned by $\partial _z, e^z\partial _w$.  In this case, the hypersurface equation takes the form
\begin{equation}
\label{zzz}
v = - u \tan y + f(y).
\end{equation}
Using a similar argument as above, only very particular functions $f(y)$ will admit a third symmetry generator.  We conclude that, in all cases, a hypersurface corresponding to a sufficiently generic $f(y)$ in \eqref{zzz} has only a two-dimensional symmetry group.

As for one-dimensional symmetry groups, one can, away from singular points, map the infinitesimal generator to $\partial _u$, which corresponds to translations in $u$.  The corresponding hypersurfaces are of the form $v = f(z,\oz)$, where $f$ is any real analytic function that does not depend on $u$.  One can clearly choose $f$ so that the hypersurface is nondegenerate and non-umbilic.  If the hypersurface were to admit a two-dimensional symmetry group, one could apply a holomorphic transformation to map it into the form \eqref{vfy} or \eqref{zzz}, and hence a hypersurface defined by a generic $f(z,\oz)$ only admits a one-dimensional symmetry group.

\vglue 15pt
%
\subsection*{Acknowledgments}

The authors would like to thank Howard Jacobowitz for his helpful comments and discussions during the preparation of this paper. We also thank Valerii Beloshapka for introducing us to \cite{Beloshapka-80} and helpful discussions. We further thank Niky Kamran for reading a draft version and sending many helpful comments.  The research of the second author was supported in part by a grant from IPM, No.\ 1400510415.
\newpage

\vglue10pt

\vglue 10pt

\end{document}